\documentclass [a4paper,10pt]{amsart}
\usepackage{longtable}
\usepackage{hyperref}
\usepackage[T1]{fontenc}
\usepackage[utf8]{inputenc}
\usepackage[english]{babel}
\usepackage{textcomp}
\usepackage{dsfont}
\usepackage{latexsym}
\usepackage{amssymb}
\usepackage{amsthm}
\usepackage{amsmath}
\DeclareMathAlphabet{\mathpzc}{OT1}{pzc}{m}{en}
\usepackage{yfonts}
\usepackage{xfrac}
\usepackage{newlfont}
\usepackage{graphicx}
\usepackage{mathtools}
\usepackage{comment}
\usepackage{indentfirst}
\usepackage{braket}
\usepackage{mathrsfs}

\textwidth16.5cm
\textheight21cm
\evensidemargin.2cm
\oddsidemargin.2cm

\addtolength{\headheight}{3.2pt}   

\newcommand{\itb}{\item[{\tiny $\bullet$}]}
\newcommand{\vb}{\vect b}
\newcommand{\vd}{\vect d}
\newcommand{\vm}{\vect m}
\newcommand{\vs}{\vect s}

\newcommand{\atomic}{(L)}
\newcommand{\atomics}{(L')}

\usepackage{etoolbox}
\apptocmd{\lim}{\limits}{}{}
\apptocmd{\sup}{\limits}{}{}
\apptocmd{\inf}{\limits}{}{}
\apptocmd{\liminf}{\limits}{}{}
\apptocmd{\limsup}{\limits}{}{}
\pretocmd{\langle}{\left}{}{}
\pretocmd{\rangle}{\right}{}{}

\usepackage{scalerel}[2014/03/10]
\usepackage[usestackEOL]{stackengine}
\newcommand{\dashint}{\,\ThisStyle{\ensurestackMath{%
\stackinset{c}{.2\LMpt}{c}{.5\LMpt}{\SavedStyle-}{\SavedStyle\phantom{\int}}}%
\setbox0=\hbox{$\SavedStyle\int\,$}\kern-\wd0}\int}

\DeclareMathOperator{\card}{Card}

\DeclareMathOperator{\supp}{Supp}

\DeclareMathOperator{\tr}{Tr}

\DeclareMathOperator{\Hol}{Hol}

\DeclareMathOperator{\Ima}{Im}
\DeclareMathOperator{\Rea}{Re}

\newcommand{\Supp}[1]{\supp\left( #1\right) }

\newcommand{\ee}{\mathrm{e}}

\newcommand{\vect}[1]{\mathbf{{#1}}}
\newcommand{\dd}{\mathrm{d}}

\DeclarePairedDelimiter{\abs}{\lvert}{\rvert}

\DeclarePairedDelimiter{\norm}{\lVert}{\rVert}

\let\originalleft\left
\let\originalright\right
\renewcommand{\left}{\mathopen{}\mathclose\bgroup\originalleft}
\renewcommand{\right}{\aftergroup\egroup\originalright}

\newcommand{\N}{\mathds{N}}

\newcommand{\C}{\mathds{C}}

\newcommand{\R}{\mathds{R}}

\newcommand{\Ms}{\mathscr{M}}

\newcommand{\Ff}{\mathfrak{F}}

\newcommand{\Cc}{\mathcal{C}}

\newcommand{\Ec}{\mathcal{E}}
\newcommand{\Fc}{\mathcal{F}}

\newcommand{\Hc}{\mathcal{H}}
\newcommand{\Ic}{\mathcal{I}}

\newcommand{\cM}{\mathcal{M}}
\newcommand{\Nc}{\mathcal{N}}

\newcommand{\Sc}{\mathcal{S}}

\newcommand{\meg}{\leqslant}
\newcommand{\Meg}{\geqslant}
\newcommand{\eps}{\varepsilon}
\renewcommand{\phi}{\varphi}
\newcommand{\mi}{\mu}

\newcommand{\Lin}{\mathscr{L}}

\begin{document}

\numberwithin{equation}{section} 
\theoremstyle{definition}
\newtheorem{deff}{Definition}[section]

\newtheorem{oss}[deff]{Remark}

\newtheorem{ass}[deff]{Assumptions}

\newtheorem{nott}[deff]{Notation}

\theoremstyle{plain}
\newtheorem{teo}[deff]{Theorem}

\newtheorem{lem}[deff]{Lemma}

\newtheorem{prop}[deff]{Proposition}

\newtheorem{cor}[deff]{Corollary}

\title[Toeplitz and Cesàro Operators on Siegel Domains]{Toeplitz and Cesàro-Type Operators on Homogeneous Siegel Domains} 
\author[M. Calzi, M. M. Peloso]{Mattia Calzi, Marco M. Peloso}

\address{Dipartimento di Matematica, Universit\`a degli Studi di
Milano, Via C. Saldini 50, 20133 Milano, Italy}
\email{{\tt mattia.calzi@unimi.it}}
\email{{\tt marco.peloso@unimi.it}}

\keywords{Toeplitz operators, Cesàro-type operators, Bergman Spaces, Schatten classes.}
\thanks{{\em Math Subject Classification 2020:} Primary: 32A36; Secondary: 32A10, 32M10. }
\thanks{Both authors are members of the
Gruppo Nazionale per l'Analisi Matematica, la Probabilit\`a e le
loro 
Applicazioni (GNAMPA) of the Istituto Nazionale di Alta Matematica
(INdAM)}

\begin{abstract}
In this paper we study Toeplitz and Cesàro-type operators on 
holomorphic function spaces on a homogeneous Siegel domain of Type
II.  We prove several necessary conditions and sufficient conditions
for these operators to be continuous or compact, or to belong to
suitable Schatten classes. 
\end{abstract}
\maketitle

\section{Introduction}

In this paper we study various mapping properties of Toeplitz and
Cesàro-type operators between mixed-norm weighted Bergman spaces on
homogeneous Siegel domains. Let us first introduce Toeplitz and
Cesàro(-type) operators.

Let $U$ be the unit disc in $\C$, and denote by $H^p(U)$ the Hardy
space on $U$ of type $L^p$, $p\in ]0,\infty]$. Then, $H^2(U)$ is a
reproducing kernel hilbertian space, and its reproducing kernel is
given by 
\[
K(z,w)= c (1-z\overline w)^{-1}
\]
for $z,w\in U$, for a suitable constant $c>0$.  The corresponding projector
\[
S f(z)\coloneqq c \int_{T} f(w)(1-z\overline w)^{-1} \,\dd w
\]
then induces continuous linear mappings of $L^p(T)$ onto $H^p(U)$ for
every $p\in ]1,\infty[$, where $T$ denotes the boundary of $U$.  
Given $g\in L^\infty(T)$,  one may then consider the Toeplitz operator
$f\mapsto S(f g)$ of $L^p(T)$ into $H^p(U)$, $p\in ]1,\infty[$. It
turns out that the matrix of the restriction of such operator to
$H^p(U)$ with respect to the standard monomial basis
$(z^k)_{k\in \N}$ has constant diagonal coefficient, i.e., is a
Toeplitz matrix. Conversely, an endomorphism of $H^2(U)$ whose matrix
with respect to the basis $(z^k)_{k\in \N}$ is Toeplitz is necessarily
of the form $f\mapsto S(f^* g)$ for some $g\in L^\infty(T)$, where
$f^*$ is the boundary value function associated with $f$ (cf.,
e.g.,~\cite[Theorem
3.2.6]{MartinezRosenthal}). See~\cite{MartinezRosenthal} and the
references therein for more details about  Toeplitz operators on the
Hardy space. 

One may then extend the preceding family of operators to more general reproducing kernel hilbertian spaces. Let us briefly discuss the case of weighted Bergman spaces.
Given $s>-1$ and $p\in ]0,\infty]$, it is known that the weighted Bergman spaces
\[
A^p_s(U)\coloneqq \Set{f\in \Hol(U)\colon \int_U \abs{f(z)}^p (1-\abs{z}^2)^s \,\dd z  <\infty}
\]
(modification for $p=\infty$) are quasi-Banach spaces and embed continuously into $\Hol(U)$. In particular, $A^2_s(U)$ is a reproducing kernel hilbertian space, with reproducing kernel
\[
K_s(z,w)= c_s (1-z\overline w)^{-2-s}
\]
for a suitable constant $c_s>0$. The corresponding (weighted Bergman) projector
\[
P_s f(z)\coloneqq c_s \int_U f(w)(1-z\overline w)^{-2-s}(1-\abs{w}^2)^s \,\dd w
\]
then induces continuous endomorphisms of $L^p_s(U)$ for every $p\in ]1,\infty[$. 
Given $g\in \Hol(U)$ and $s'>0$, the the compression with $P_{s'}$ of the operator of multiplication by $g$, namely,
\[
f \mapsto P_{s'}(f g) := T_g f
\]
is called a Toeplitz operator with symbol $g$.
More generally, given a Radon measure $\mi$ on $U$, one may consider the operator
\[
f \mapsto  \int_U f(w)(1-\,\cdot\,\overline w)^{-2-s}\,\dd \mi(w) 
:= T_\mi f,
\]
and still call it a Toeplitz operator  with  symbol $\mi$. 
One may then consider Toeplitz operators between weighted Bergman
spaces on more general
domains. Cf.~\cite{Luecking7,Zhu3,LiLuecking,MiaoZheng,Zhu,Zhu2,ChoeKooLee,Luecking8,Constantin,NanaSehba,PelaezRattyaSierra,AbateMongodiRaissy}
and the references therein for more details on various aspects of the
theory of Toeplitz operators on Bergman spaces.

It is known that the monomials $z^k$, $k\in \N$, form an
orthonormal basis for the Hardy space $H^2(U)$. Therefore, the space
$H^2(U)$ may be identified with $\ell^2(\N)$. Therefore, the Cesàro
operator on $\ell^2(\N)$ (cf.~\cite[326]{HLP}) 
\[
\ell^2(\N)\ni \lambda \mapsto \left( \frac{1}{k+1}\sum_{j\meg k} \lambda_j  \right)_{k\in \N}\in \ell^2(\N),
\]
can be transferred to an endomorphism of $H^2$, 
\[
\Cc\colon H^2(U)  \ni f\mapsto \int_0^{\,\cdot\,} \frac{f(w)}{1-w}\,\dd w\in H^2(U).
\]
As observed in~\cite{AlemanSiskakis}, this operator can be considered as a particular case (corresponding to the choice $g\coloneqq- \log(1-\,\cdot\,)$) of the operator
\[
\Cc_g\colon  f \mapsto \int_0^{\,\cdot\,} f(w) g'(w)\,\dd w,
\]
for $g\in \Hol(U)$.
The operators $\Cc_g$ may then be investigated on more general spaces. In~\cite{AlemanSiskakis}, the mapping properties of the operators $\Cc_g$ between various weighted Bergman space were investigated. Cf.~\cite{Siskakis,Aleman,AlemanConstantin,Constantin} for more details on these operators.

As noted in~\cite{NanaSehba}, the operator $\Cc_g$ may be essentially characterized by the property
\[
(\Cc_g f)'= f g',
\] 
and then extended to general homogeneous Siegel domains replacing the
standard derivative with more general Riemann--Liouville operators. We
call the resulting operators `Cesàro-type operators'. Notice, though,
that this latter interpretation basically reduces the study of such
operators to the study of the corresponding multiplication operators,
since the precise definition of $\Cc_g f$ is no longer specified and
we content ourselves with defining $\Cc_g f$ modulo the kernel of the
chosen Riemann--Liouville operator.

We shall now briefly describe homogeneous Siegel domains and the weighted Bergman spaces thereon.
Fix a complex hilbertian space $E$  of finite dimension $n$, a real
hilbertian space $F$  of finite dimension $m>0$, and an open convex cone $\Omega$ 
in $F$ which does  not contain any affine lines. $\Omega$ is said to be homogeneous if the group $G(\Omega)$ of its linear automorphisms acts transitively on it.\footnote{We shall generally
denote by $\langle\,\cdot\,,\,\cdot\,\rangle$ bilinear pairings and
real scalar products, and by $\langle\,\cdot\,\vert\,\cdot\,\rangle$
sesquilinear pairings and complex scalar products, without
specifying the involved spaces.} 
Take a non-degenerate hermitian
mapping $\Phi\colon E\times E\to F_\C $  such that $\Phi(\zeta)\coloneqq \Phi(\zeta,\zeta)\in
\overline\Omega$ for all $\zeta\in E$.  Then, the  Siegel domain of
type II associated with the cone $\Omega$ and the mapping $\Phi$ is
\[
D \coloneqq \Set{ (\zeta, z) \in E \times F_\C \colon
\rho(\zeta,z)\coloneqq \Ima z -  \Phi(\zeta) \in \Omega } .
\]
When $n=0$, i.e., $E=\Set{0}$, $D$ is said to be a Siegel domain of Type I, or  a {\em  tubular domain} over the cone $\Omega$.  
The domain $D$ is   homogeneous if the group of its biholomorphisms acts transitively on $D$, in which case the group of its \emph{affine} automorphisms acts transitively (cf., e.g.,~\cite[Theorem 2.3]{Xu}). More
precisely, $D$ is homogeneous if and only if for every $h,h'\in
\Omega$ there are $t\in G(\Omega)$ and $g\in GL(E)$ such that $th=h'$ and
such that $t\Phi=\Phi(g\times g)$ (so that $g\times t$ preserves
$D$), cf., e.g.,~\cite[Propositions 2.1 and 2.2]{Murakami}. In
particular, if $D$ is homogeneous, then $\Omega$ is homogeneous.  

The domain $D$ is symmetric if it is homogeneous and admits
an involutive biholomorphism with an isolated (or, equivalently, a unique) fixed point. If $D$ is
symmetric, then $\Omega$ is symmetric, that is, homogeneous and
self-dual. Conversely, if $\Omega$ is symmetric \emph{and $D$ is a
tubular domain}, then  $D$ is symmetric (cf., e.g.,~\cite[Theorem]{Satake} for more details on various characterizations of symmetric Siegel domains).

The \v{S}ilov boundary  of $D$, that is, the smallest closed subset of $\overline D$ on which every bounded continuous function on $\overline D$ which is holomorphic on $D$ has the same supremum as on $\overline D$, is 
\[
bD \coloneqq \Set{ (\zeta, z) \in E \times F_\C \colon\rho(\zeta,z) =0 } ,
\]
and admits a natural $2$-step nilpotent Lie group structure whose product is best described under the identification $ b D\ni (\zeta,
x+i\Phi(\zeta,\zeta))\mapsto(\zeta,x)\in E\times F$. Namely,
\[
(\zeta,x)(\zeta',x')=(\zeta+\zeta',x+x'+2\Ima
\Phi(\zeta,\zeta')),
\]
for $(\zeta,x),(\zeta',x')\in E\times F$,  see e.g.~\cite[Section 1.1]{CalziPeloso}. We denote by $\Nc$ the set $E\times F$ endowed with this group structure. 

Notice that  $\rho$ maps $ D$ into $\Omega$, and that
the fibres $b D+(0, i h)$, $h\in \Omega$, of $\rho$  give rise to a foliation of $D$. Given a function $f$
defined on $D$, we shall often denote by $f_h$ its restriction
to $b D+(0,i h)$, interpreted as a function on $\Nc$ for the sake of
convenience. Explicitly,
\[
f_h (\zeta,x)=f(\zeta,x+i\Phi(\zeta)+i h)
\]
for every $h\in \Omega$ and for every $(\zeta,x)\in \Nc$. Note
that, identifying  $b D+(0,i h)$ with $\Nc$ as above for every $h\in
\Omega$, we get a left action of $b D$ on $D$ by affine
biholomorphisms.

For $p,q\in ]0,\infty]$ and $\vs\in\R^r$, the
weighted Bergman spaces are defined as\footnote{The definitions of the
rank $r$ of $\Omega$,  of the `generalized power functions'
$\Delta_\Omega^{\vect s}$ and of $\vd\in\R^r$   are deferred to Section~\ref{sec:2}.} 
\begin{equation}\label{eq:1}
A^{p,q}_\vs (D)\coloneqq \Set{  f\in\Hol(D)\colon\int_\Omega \Big(
\int_\Nc\abs{f_h(\zeta,x)}^p \,  \dd (\zeta,x) \Big)^{q/p}
\Delta^{q \vs}_\Omega (h)  \, \dd \nu_\Omega(h)<\infty\,  } 
\end{equation}
(modification if $\max(p,q)=\infty$), where $\dd (\zeta,x)$ denotes a
Haar measure on $\Nc$ and $\nu_\Omega$ denotes a positive
$G(\Omega)$-invariant measure  on $\Omega$, both  fixed and  unique
up to a multiplicative constant. We shall sometimes simply write $A^{p,q}_\vs$ instead of $A^{p,q}_\vs (D)$. 

We remark that the spaces $A^{p,p}_\vs$ are the more `classical' weighted Bergman spaces,  the unweighted case corresponding to the value $\vs=-\vd/p$, while the
spaces $A^{p,\infty}_{\vect 0}$ are the classical Hardy spaces.  
 
We also remark that the analysis  of  holomorphic
function spaces on homogeneous Siegel domains of Type
II is a quite active area of research, see, e.g.,~\cite{ACMPS,AMPS,MPS} and
references therein for some recent results.  

In this paper, we are interested in the study of Toeplitz and Cesàro-type operators on the weighted Bergman spaces $A^{p,q}_\vs(D)$.  

\medskip

We now briefly outline the structure of this paper.
In Section~\ref{sec:2}, we review some basic facts concerning
homogeneous cones and homogeneous Siegel domains. In
Section~\ref{sec:fnct-sp}, we review the definitions and the basic
properties of the function spaces on homogeneous Siegel domain that
are relevant to our analysis. 
In Section~\ref{sec:Schatten}, we recall some basic facts about
Schatten classes of operators between two hilbertian spaces. These
results should be known, but we have not been able to find them stated
in the generality we needed. Even though we provide no proofs, we
briefly indicate how the result present in the literature should be
adapted. 
In Section~\ref{sec:Toeplitz}, we present our main results about
Toeplitz operators between weighted Bergman spaces on homogeneous
Siegel domains. We both provide necessary conditions
(cf.~Proposition~\ref{prop:11}) and sufficient conditions
(cf.~Theorem~\ref{prop:14}) for continuity and compactness of Toeplitz
operators between the spaces  $A^{p,q}$.  In addition, we provide
necessary conditions and sufficient conditions on the symbol $\mi$
(cf.~Theorem~\ref{prop:33}) for a Toeplitz operator between the spaces
$A^{2,2}$ to belong to the Schatten class $\Lin^p$. 
In Section~\ref{sec:Cesaro}, we present our main results about
Cesàro-type operators between weighted Bergman spaces on homogeneous
Siegel domains. We both provide necessary conditions
(cf.~Proposition~\ref{prop:5bis} and~\ref{prop:9}) and sufficient
conditions (cf.~Proposition~\ref{prop:4}) for continuity and
compactness of Cesàro-type operators between the spaces $A^{p,q}$. In
addition, we characterize the Cesàro-type operators which induce
isomorphisms onto their image between the spaces $A^{p,p}$
(cf.~Proposition~\ref{prop:29}) and the Cesàro-type operators between
the spaces $A^{2,2}$ to belong to the Schatten class $\Lin^p$
(cf.~Theorem~\ref{prop:30}).

\section{Homogeneous Siegel domains of Type  II}
\label{sec:2}

We present here without proof some basic facts concerning homogeneous Siegel domains of type II. For a more detailed exposition, see~\cite{CalziPeloso}, of which we keep the notation as far as possible.

We shall denote by $E$ a complex hilbertian space of finite dimension $n$, and by $F$ a real hilbertian space of finite dimension $m$. We denote by $F_\C$ the complexification of $F$. Given an open convex cone $\Omega\subseteq F$ not containing any affine line, and an hermitian mapping $\Phi\colon E\times E\to F_\C$ such that
\[
\Phi(\zeta)\coloneqq \Phi(\zeta,\zeta)\in \overline \Omega \setminus \Set{0}
\]
for every non-zero $\zeta\in E$, we denote by $D$ associated the Siegel domain (of type II), that is,
\[
D\coloneqq \Set{(\zeta,z)\in E\times F_\C\colon \rho(\zeta,z)\coloneqq \Ima z-\Phi(\zeta)\in \Omega}.
\]
In order that $D$ be homogeneous (that is, in order that the biholomorphisms of $D$ act transitively) it is necessary and sufficient that there is a Lie group $T_+$ such that the following hold:
\begin{itemize}
	\itb $T_+$ acts linearly and simply transitively (on the left) on $\Omega$;
	
	\itb for every $t\in T$ there is $g\in GL(E)$ such that $t\cdot \Phi=\Phi(g\times g)$.
\end{itemize}
In this case, $T_+$ acts, by transposition, (on the right) on the dual cone 
\[
\Omega'\coloneqq \Set{\lambda\in F'\colon \forall h\in \overline{\Omega}\setminus \Set{0}\quad \langle \lambda, h\rangle >0}.
\] 

Observe that $T_+/[T_+,T_+]$ is canonically isomorphic to $(\R_+^*)^r$ for some $r\in \N$, which is the rank of $\Omega$ (and $D$). In order to avoid trivialities, we shall assume that $r>0$, that is, that $F\neq \Set{0}$. Notice that $\Omega=\R_+^*$ when $r=1$.
Once we fix an analytic surjective strict morphism $\Delta\colon T_+\to (\R_+^*)^r$, we may describe the characters of $T_+$ as the `generalized power functions' 
\[
\Delta^\vect s=\Delta_1^{s_1}\cdots \Delta_r^{s_r},
\]
for every $\vect s\in \C^r$. Once base-points $e_\Omega$ and $e_{\Omega'}$ are chosen in $\Omega$ and $\Omega'$, respectively, the characters $\Delta^{\vect s}$ can be transferred to generalized power functions $\Delta^{\vect s}_\Omega$ and $\Delta^{\vect s}_{\Omega'}$ on $\Omega$ and $\Omega'$, respectively. Precisely,
\[
\Delta^{\vect s}_\Omega(t\cdot e_\Omega)=\Delta^{\vect s}_{\Omega'}(e_{\Omega'}\cdot t)=\Delta^{\vect s}(t)
\]
for every $t\in T_+$.

As a matter of fact, it is possible to find $\Delta$ in such a way that the following hold (cf.~\cite[Sections 2.1--2.3 and 2.5]{CalziPeloso}):
\begin{itemize}
	\itb there is $\vd\in (\R_-^*)^r$ such that the measures
	\[
	\nu_\Omega\coloneqq \Delta_\Omega^\vd \cdot \Hc^m \qquad \text{and} \qquad \nu_{\Omega'}\coloneqq \Delta_{\Omega'}^\vd \cdot \Hc^m
	\]
	are invariant under the linear automorphisms of $\Omega$ and $\Omega'$, respectively;\footnote{Here, $\Hc^m$ denotes the Hausdorff measure on $\Omega$ and $\Omega'$, respectively.}
	
	\itb there are $\vm,\vm'\in \N^r$ such that the Laplace transform of $\Delta_\Omega^{\vect s}\cdot \nu_\Omega$ and $\Delta_{\Omega'}^{\vect s}\cdot \nu_{\Omega'}$ has a non-empty domain if and only if $\Rea \vs \in \vm+(\R_+^*)^r$ and $\Rea \vs \in \vm'+(\R_+^*)^r$, respectively;
	
	\itb for every $\Rea \vs \in \vm+(\R_+^*)^r$ (resp.\ $\Rea\vs \in \vm'+(\R_+^*)^r$) the Laplace transform of $\Delta_\Omega^{\vect s}\cdot \nu_\Omega$  (resp.\ $\Delta_{\Omega'}^{\vect s}\cdot \nu_{\Omega'}$) is defined on $\Omega'$ (resp.\ $\Omega$), and equals
	\[
	\Gamma_\Omega(\vs)\Delta_{\Omega'}^{-\vs} \qquad \text{(resp.\ }\Gamma_{\Omega'}(\vs)\Delta_{\Omega}^{-\vs}\text{),}
	\]
	thereon, where
	\[
	\Gamma_\Omega(\vs)=c \prod_{j=1}^r \Gamma\left(s_j-\frac{m_j}{2}\right) \qquad    \text{(resp.\ } \Gamma_{\Omega'}(\vs)=c \prod_{j=1}^r \Gamma\left(s_j-\frac{m'_j}{2}\right)\text{),}
	\]
	for a suitable constant $c>0$;
	
	\itb there is $\vb\in \R_-^r$ such that $\Delta^{\vb}(t)=\det_\R(g)$ for every $t\in T_+$ and for every $g\in GL(E)$ such that $t\cdot \Phi=\Phi(g\times g)$;
	
	\itb  $\vb=\vect{0}$ if and only if $E=\Set{0}$;
	
	\itb the measure $\nu_D\coloneqq (\Delta^{\vect b+2\vect d}\circ \rho) \cdot \Hc^{2 n+2m}$ on $D$ is invariant under all biholomorphisms of $D$.
\end{itemize}

Observe that, when $r=1$, the holomorphic family of measures $\big(\frac{1}{\Gamma(s)} (\,\cdot\,)^{s-1}\cdot \Hc^1\big)_{\Rea s>0}$  extends uniquely to a holomorphic family of tempered distributions on $\C$, giving rise to the so-called Riemann--Liouville operators. 
Also in the general case it is possible to prove that there is a unique holomorphic family  $(I^{\vect s}_\Omega)_{\vs\in \C^r}$ of tempered distributions on $F$  such that $I^{\vect s}_\Omega= \frac{1}{\Gamma_\Omega(\vs)}\Delta^{\vect s}_\Omega \cdot \nu_\Omega$ for $\Rea\vs\in \frac 1 2 \vect m+(\R_+^*)^r$ (cf.~\cite[Lemma 2.26, Definition 2.27, and Proposition 2.28]{CalziPeloso}). We therefore call `Riemann--Liouville operators' the operators of convolution by the distributions $I^{\vs}_\Omega$.

Notice that $\Delta^\vs_\Omega\circ \rho$ can be interpreted as the restriction to the diagonal of a `sesqui-holomorphic' function $B^\vs$ defined on $D\times D$ which is of particular importance in the study of weighted Bergman spaces. Explicitly,
\[
B^\vs_{(\zeta',z')}(\zeta,z)=\Delta_\Omega^\vs\left( \frac{z-\overline{z'}}{2 i}-\Phi(\zeta,\zeta') \right)
\]
for every $(\zeta,z),(\zeta',z')\in D$. Obviously, the same definition can be extended to the case in which one at most between $(\zeta,z)$ and $(\zeta',z')$ belongs to $\overline D$.

Observe that $\Nc\coloneqq E\times F$, endowed with the $2$-step nilpotent Lie group structure given by the product
\[
(\zeta,x)(\zeta',x')\coloneqq (\zeta+\zeta', x+x'+2 \Ima \Phi(\zeta,\zeta'))
\]
for $(\zeta,x),(\zeta',x')\in E\times F$, acts freely and affinely on the complex space $E\times F_\C$ as follows:
\[
(\zeta,x)\cdot (\zeta',z')\coloneqq (\zeta+\zeta',x+i\Phi(\zeta)+z'+2 i\Ima\Phi(\zeta',\zeta))
\]
In particular, $\Nc$ acts simply transitively on the \v Silov boundary $b D=\Nc\cdot \vect{0}$ of $D$, with which it can therefore be identified, and induces an action on $D$.

Fourier analysis on $\Nc$ plays a relevant role in the study of various function spaces of holomorphic functions on $D$. For our purposes, a detailed presentation of the representations of $\Nc$ is superfluous, and we shall only present some basic facts. 
Observe first that, for every $\lambda\in F'$, the group $\Nc/\ker \lambda$ is isomorphic to the product of a Heisenberg group and an abelian group; it is actually isomorphic to a Heisenberg group if $\lambda$ is in the complement of a proper algebraic set. Therefore, the Stone--von Neumann theorem (cf., e.g.,~\cite[Theorem 1.50]{Folland}) shows that there is (up to unitary equivalence) a unique irreducible unitary representation $\pi_\lambda$ of $\Nc$ is some hilbertian space $H_\lambda$ such that $\pi_\lambda(0,i x)=e^{-i \langle \lambda, x\rangle} I$ for every $x\in F$.
It turns out that these representations are sufficient to get a Plancherel formula. More explicitly, (cf.~\cite[Corollary 1.17 and Proposition 2.30]{CalziPeloso})
\[
\norm{f}_{L^2(\Nc)}^2= c\int_{F'} \norm{\pi_\lambda (f)}_{\Lin^2(H_\lambda)}^2 \Delta_{\Omega'}^{-\vb}(\lambda)\,\dd \lambda
\]
for every $f\in L^1(\Nc)\cap L^2(\Nc)$, where $\Lin^2(H_\lambda)$ denotes the space of Hilbert--Schmidt endomorphisms of $H_\lambda$.
Notice that $\Delta_{\Omega'}^{-\vb}$ is actually a polynomial (so that it is defined on the whole of $F'$), cf.~\cite[Proposition 2.30]{CalziPeloso}.

Since $\pi_\lambda(f_h)=0$ for almost every $\lambda\not \in \Omega'$, for every $h\in \Omega$, and for every $f$ in the space $ A^{p,q}_\vs$ (to be defined below), $p\in ]0,2]$ (cf.~\cite[Corollary 1.37 and 3.3, and Proposition 3.2]{CalziPeloso}),  we shall only describe $\pi_\lambda$ for $\lambda\in \Omega'$ (`Bargmann representation'). In this case, $H_\lambda\coloneqq \Hol(E)\cap L^2(\nu_\lambda)$, with $\nu_\lambda= \ee^{-2\langle\lambda, \Phi(\,\cdot\,)\rangle}\cdot \Hc^{2 n}$, and  
\begin{equation}\label{eq:pi-lambda}
	\pi_\lambda(\zeta,x)  \psi  (\omega)
	\coloneqq \ee^{\langle \lambda_\C, -i x +2 \Phi(\omega, \zeta)-\Phi(\zeta)\rangle}  \psi (\omega- \zeta),
\end{equation} 
for every $\psi \in H_\lambda$, for every $\omega\in E$, and for every $(\zeta,x)\in \Nc$. 
In addition, if $P_{\lambda,0}$ denotes the self-adjoint projector of $H_\lambda$ onto the space of constant functions, then $\pi_\lambda(f_h)=\pi_\lambda(f_h)P_{\lambda,0}$ for almost every $\lambda\in \Omega'$, for every $h\in \Omega$, and for every $f\in A^{p,q}_\vs$, $p\in]0,2]$ (cf.~\cite[Proposition 1.19 and 3.2, and Corollary 3.3]{CalziPeloso}).

We conclude this section with some remarks concerning lattices. We first endow $D$ with the Bergman metric, which is the complete K\"ahler metric defined by
\[
\partial_v \overline{\partial_w} \log (\Delta^{\vect b+2 \vect d}\circ \rho)(\zeta,z)
\]
for every $(\zeta,z)\in D$ and for every $v,w\in E\times F_\C$ (cf.~\cite[Section 2.5]{CalziPeloso}). We denote by $B((\zeta,z),R)$ the corresponding open ball of centre $(\zeta,z)$ and radius $R$.

We endow $\Omega$ with the quotient metric induced by the submersion $\rho\colon D\to \Omega$, and $\Omega'$ with the Riemannian metric induced by the correspondence $\Omega\ni t\cdot e_\Omega \mapsto e_{\Omega'}\cdot t\in \Omega'$. We denote by $B_\Omega(h,R)$ and $B_{\Omega'}(\lambda,R)$ the corresponding open balls of centre $h$ and $\lambda$, respectively, and radius $R$. 

A $(\delta,R)$-lattice on $\Omega$, with $\delta>0$ and $R>1$, is a family $(h_k)_{k\in K}$ of elements of $\Omega$ such that the  $B_\Omega(h_k,\delta)$ are pairwise disjoint, while the   $B_\Omega(h_k,R\delta)$ cover $\Omega$. 
Observe that any maximal $(2\delta)$-separated family of elements of $\Omega$ is a $(\delta,2)$-lattice, so that $(\delta,2)$-lattices always exist.
Lattices on $\Omega'$ are defined similarly.

In order to define lattices on $D$, though, we need to be a little more cautious. Since we wish to deal with mixed-norm spaces, it is more convenient to consider a two-parameter family $(\zeta_{j,k},z_{j,k})_{j\in J,k\in K}$ of elements of $D$ such that the $B((\zeta_{j,k},z_{j,k}),\delta)$ are pairwise disjoint, the $B((\zeta_{j,k},z_{j,k}),R\delta)$ cover $D$ (as for usual lattices), and such that there is a $(\delta,R)$-lattice $(h_k)_{k\in K}$ on $\Omega$ such that $\rho(\zeta_{j,k},z_{j,k})=h_k$ for every $j\in J$ and for every $k\in K$.
Refining the argument which gives rise to $(\delta,2)$-lattices on $\Omega$ and $\Omega'$, it is possible to prove that there are $(\delta,4)$-lattices on $D$ for every fixed $\delta>0$ (cf.~\cite[Lemma 2.55]{CalziPeloso}).

\section{Function spaces}\label{sec:fnct-sp}

In this section we define the main function spaces we shall consider, and state some of their properties. We refer the reader to~\cite{CalziPeloso}  for a more thorough exposition.

\begin{deff}\label{def:Lpqs}
	Take $\vs\in \R^r$ and $p,q\in ]0,\infty]$, and define 
	\[
	L^{p,q}_\vs(D)\coloneqq\Set{ f\colon D\to \C\colon  f \text{ is measurable}, \int_\Omega  \Big(    \Delta_\Omega^{\vs}(h) \norm{f_h}_{L^p(\Nc)}  \Big) ^q\,\dd  \nu_\Omega(h)<\infty  }  
	\]
	(modification when $q=\infty$). We define  $L^{p,q}_{\vs,0}(D)$ as the
	closure of $C_c( D)$ in $L^{p,q}_{\vs}(D)$, and set
	\[
	A^{p,q}_\vs(D)= L^{p,q}_\vs(D)\cap \Hol(D), \qquad\text{and}\qquad
	A^{p,q}_{\vs,0}(D)
	= L^{p,q}_{\vs,0}(D) \cap \Hol(D).
	\]
\end{deff}

Notice that $L^{p,q}_{\vs,0}(D)=L^{p,q}_{\vs}(D)$  and $A^{p,q}_{\vs,0}(D)=A^{p,q}_{\vs}(D)$ if (and only if) $p,q<\infty$.
In addition, it is not hard to prove that $A^{p,q}_{\vs,0}(D)\neq \Set{0}$ (resp.\   $A^{p,q}_{\vs}(D)\neq \Set{0}$) if and only if  $\vs\in \frac{1}{2 q}\vect m+(\R_+^*)^r$ (resp.  $\vs\in \R_+^r$ if $q=\infty$), cf.~\cite[Proposition~3.5]{CalziPeloso}. 

Observe that the spaces $A^{p,q}_\vs(D)$ and $A^{p,q}_{\vs,0}(D)$ are complete metrizable topological vector spaces (Banach spaces when $p,q\Meg 1$) and embeds continuously into $\Hol(D)$, endowed with the topology of compact convergence. In particular, $A^{2,2}_\vs(D)$ is a reproducing kernel hilbertian space, with reproducing kernel
\[
K_\vs ((\zeta,z),(\zeta',z'))
\coloneqq c\frac{\Gamma_{\Omega'}(2 \vs-\vb-\vd)}{\Gamma_\Omega(2\vs)}
B_{(\zeta',z')}^{\vb+\vd-2\vs} (\zeta,z)
\]
for a suitable constant $c>0$, cf.~\cite[Remark 3.12]{CalziPeloso}. 
For $\vs\in \vb+\vd-\frac 1 2 \vm-(\R_+^*)^r$ we may therefore consider the weighted Bergman projector
\[
P_\vs\colon f \mapsto c_\vs \int_D f(\zeta',z')B_{(\zeta',z')}^{\vs} \Delta^{-\vs}(\rho(\zeta',z')) \,\dd \nu_D(\zeta,z),
\]
where
\[
c_{\vs}\coloneqq c\frac{\Gamma_{\Omega'}(-\vs)}{\Gamma_\Omega(\vb+\vd-\vs)}.
\]
Analogously, the Hardy space $A^{2,\infty}_{\vect0}(D)$ is a reproducing kernel hilbertian space, and its reproducing kernel (the `Cauchy--Szeg\H o kernel') is given by
\[
S_{(\zeta,z)}(\zeta',x') \coloneqq
c' \big( B^{\vb+\vd}_{(\zeta,z)}\big)_0(\zeta',x')
\]
for a suitable constant $c'>0$, cf.~\cite[Lemma 5.1]{CalziPeloso}.\footnote{Since the boundary value mapping 
	\[
	f\mapsto f_0\coloneqq\lim_{h\to 0} f_h
	\]
	is an isometry from the Hardy space $A^{2,\infty}_{\vect 0}(D)$ onto a closed subspace of $L^2(\Nc)$, it is customary to write the Cauchy--Szeg\H o kernel as a function on $\Nc\times D$  instead of as a function on $D\times D$ as one may expect.  }  One may therefore reconstruct every $f\in A^{2,\infty}_{\vect 0}(D)$ from their boundary values $f_0\coloneqq \lim_{h\to 0} f_h$. More precisely,
\[
f(\zeta,z)=\langle f_0| S_{(\zeta,z)}\rangle_{L^2(\Nc)} 
\]
for every $(\zeta,z)\in D$.

This reconstruction formula is crucial for the study of boundary values of the spaces $A^{p,q}_\vs(D)$. 
Let us first describe the Besov spaces of analytic type (on $\Nc$) which are the `natural' candidates for the boundary value spaces of the weighted Bergman spaces considered above.
Since the non-commutative Fourier transform of tempered distributions on $\Nc$ is not easy to manage, we shall first introduce some spaces of test functions which are particularly well-suited to our analysis.
We first define
\[
\Sc_\Omega(\Nc)\coloneqq \Set{\psi\in \Sc(\Nc)\colon  \exists \phi\in C^\infty_c(\Omega') \;\forall \lambda\in \Omega'\;\; \pi_\lambda(\psi)= \phi(\lambda) P_{\lambda,0}, \text{ while } \pi_\lambda(\psi)=0 \text{ for a.e.\ $\lambda\not \in \Omega'$}},
\]
and then $\Sc_{\Omega,L}(\Nc)*\Sc_\Omega(\Nc)$, endowed with the inductive limit of the
topologies induced by $\Sc(\Nc)$ on its subspaces $\Sc(\Nc)*\psi$,
$\psi\in \Sc_{\Omega}(\Nc)$.\footnote{It is not hard to see that this  definition is equivalent to~\cite[Definition 4.4]{CalziPeloso}.}
We denote by  $\Sc_{\Omega,L}'(\Nc)$ the dual of $\Sc_{\Omega,L}(\Nc)$.
See~\cite[Propositions 4.2 and 4.5, and Lemma~4.14]{CalziPeloso} for a proof of the following result.

\begin{prop}\label{prop:4.2}
The following hold:
\begin{enumerate}
\item[\em(1)] the mapping $\Fc_\Nc\colon \varphi\mapsto [\lambda
\mapsto \tr(\pi_\lambda(\varphi) ) ] $ induces an isomorphism of
$\Sc_\Omega(\Nc)$ onto $C^\infty_c(\Omega')$; 

\item[\em(2)]  given two $(\delta,R)$-lattices $(\lambda_k)_{k\in K}$ and $(\lambda'_{k'})_{k'\in K'}$ on $\Omega'$, and two families $(\psi_k)_{k\in K}, (\psi'_{k'})_{k'\in K'}$ of elements of $\Sc_\Omega(\Nc)$ such that $((\Fc_\Nc \psi_k)(\,\cdot\, t_k))$ and $((\Fc_\Nc \psi'_{k'})(\,\cdot\, t'_{k'}))$ are bounded families of positive elements of $C^\infty_c(\Omega')$, where $t_k,t'_{k'}\in T_+$ are such that $\lambda_k=e_{\Omega'}\cdot t_k$ and $\lambda'_{k'}=e_{\Omega'}\cdot t'_{k'}$, and
\[
\sum_k \Fc_\Nc\psi_k, \sum_{k'} \Fc_\Nc \psi_{k'}\Meg 1
\]
on $\Omega'$,  one has  
\[
\Big\| \Delta_{\Omega'}^{\vs}(\lambda_{k'}) \big\|u*\psi'_{k'}
\big\|_{L^p(\Nc)}  \Big\|_{\ell^q(K')}
\approx\Big\| \Delta_{\Omega'}^{\vs}(\lambda_k) \big\| u*\psi_k
\big\|_{L^p(\Nc)}  \Big\|_{\ell^q(K)},
\]
for every $u\in \Sc_{\Omega,L}'(\Nc)$.  
	\end{enumerate} 
\end{prop}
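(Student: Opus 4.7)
The plan is to handle the two parts separately, starting with an explicit description of $\Fc_\Nc$ on $\Sc_\Omega(\Nc)$ and then using it to implement a classical change-of-lattice argument for discrete Besov-type norms.

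For part (1), the key computation is that if $\phi \in \Sc_\Omega(\Nc)$ with associated $\phi_0 \in C^\infty_c(\Omega')$ satisfying $\pi_\lambda(\phi) = \phi_0(\lambda) P_{\lambda,0}$, then $\tr(\pi_\lambda(\phi)) = \phi_0(\lambda) \tr(P_{\lambda,0}) = \phi_0(\lambda)$, since $P_{\lambda,0}$ is a rank-one orthogonal projector. Thus $\Fc_\Nc$ sends $\Sc_\Omega(\Nc)$ bijectively onto $C^\infty_c(\Omega')$ at the level of maps, with injectivity guaranteed by the Plancherel formula recalled in Section~\ref{sec:2}. The substantive content is to show the inverse is well-defined and continuous: given $\phi_0 \in C^\infty_c(\Omega')$ one builds $\phi$ explicitly via an operator-valued inverse Fourier transform, using the Bargmann formula~\eqref{eq:pi-lambda} together with the Plancherel density $\Delta_{\Omega'}^{-\vb}$; rapid decay of $\phi$ in $x$ follows from smoothness of $\phi_0$ by integration by parts in $\lambda$, while rapid decay in $\zeta$ follows from the Gaussian factor $\ee^{-\langle \lambda, \Phi(\zeta)\rangle}$ uniformly on the compact support of $\phi_0$ inside $\Omega'$. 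Continuity of $\Fc_\Nc$ and of its inverse with respect to the stated topologies is then a direct inspection of these formulas.

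For part (2), the strategy is the standard Peetre-type comparison argument adapted to the present non-commutative setting. Using part (1), choose a smooth partition of unity $(\chi_k)_{k\in K}$ on $\Omega'$ subordinated to $(B_{\Omega'}(\lambda_k, R\delta))_{k \in K}$, with $\chi_k(\,\cdot\, t_k)$ uniformly bounded in $C^\infty_c$, and pull back to auxiliary functions $\eta_k \in \Sc_\Omega(\Nc)$. Write the reproducing identity
\[
u*\psi'_{k'} = \sum_{k \in K} u*\psi'_{k'}*\eta_k,
\]
noting that the Fourier support of $\Fc_\Nc \psi'_{k'}$ is concentrated near $\lambda'_{k'}$ at scale $\delta$, so only indices $k$ with $\lambda_k$ in a bounded neighbourhood $K(k')$ of $\lambda'_{k'}$ (in the invariant metric) contribute. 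For such $k$, invoke the basic reproducing identity $\psi_k = \psi_k*\tilde\eta_k$ (with $\tilde\eta_k$ another element of $\Sc_\Omega(\Nc)$ produced by part (1) so that $\Fc_\Nc \tilde\eta_k = 1$ on the support of $\Fc_\Nc \psi_k$) to bound
\[
\norm{u*\psi'_{k'}}_{L^p(\Nc)} \meg C\sum_{k \in K(k')} \norm{u*\psi_k}_{L^p(\Nc)}
\]
by Young's inequality. Combined with the slow variation $\Delta_{\Omega'}^{\vs}(\lambda_{k'}) \approx \Delta_{\Omega'}^{\vs}(\lambda_k)$ for $k \in K(k')$ and the bounded cardinality of $K(k')$, a triangle inequality in $\ell^q$ yields one direction of the claimed equivalence; the other direction is symmetric.

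The main obstacle is the uniformity of all the above constants across the non-compact cone $\Omega'$. This is precisely where the hypothesis that $(\Fc_\Nc \psi_k)(\,\cdot\, t_k)$ is a \emph{bounded} family in $C^\infty_c(\Omega')$ comes into play: it reduces the problem to the base point $e_{\Omega'}$ via the simply transitive $T_+$-action, at which point the $L^1$ norms of the auxiliary convolution kernels $\psi'_{k'}*\eta_k*\tilde\eta_k$ (after the appropriate dilation) are uniformly controlled, giving a uniform constant in Young's inequality. The same $T_+$-reduction yields the slow variation of $\Delta_{\Omega'}^{\vs}$ and the bounded overlap of the lattice neighbourhoods $K(k')$ in the invariant metric, completing the argument.
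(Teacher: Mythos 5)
The paper does not actually prove Proposition~\ref{prop:4.2}: it is recalled verbatim from the reference [CalziPeloso, Propositions 4.2 and 4.5, and Lemma 4.14], so there is no in-paper argument to compare yours against, and I am judging your proposal on its own terms. Your part (1) is outlined along the right lines: the trace computation $\tr(\pi_\lambda(\phi))=\phi_0(\lambda)$ (since $P_{\lambda,0}$ has rank one) and injectivity via the Plancherel formula are correct, and the construction of the inverse by an operator-valued inverse Fourier transform is the expected route, although the substantive half --- that this inverse lands in $\Sc(\Nc)$ with all the required decay and with continuous dependence on $\phi_0$ --- is asserted rather than argued.

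Part (2) has two genuine gaps. First, the central step is not correctly executed. From the decomposition $u*\psi'_{k'}=\sum_{k\in K(k')}u*\psi'_{k'}*\eta_k$, with $(\eta_k)$ pulled back from an \emph{arbitrary} partition of unity subordinated to the lattice balls, the identity $\psi_k=\psi_k*\tilde\eta_k$ gives you no way to express $u*\psi'_{k'}*\eta_k$ in terms of $u*\psi_k$: what you need is a factorization $\psi'_{k'}*\eta_k=\psi_k*\theta_{k,k'}$ with $\norm{\theta_{k,k'}}_{L^1(\Nc)}$ uniformly bounded, and producing $\theta_{k,k'}$ forces you to divide by $\Fc_\Nc\psi_k$, which may vanish on part of the support of $\chi_k$. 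The standard repair is to take $\Fc_\Nc\eta_k=\Fc_\Nc\psi_k\big/\sum_j\Fc_\Nc\psi_j$, which is precisely where the hypotheses that the $\Fc_\Nc\psi_k$ are positive and that $\sum_k\Fc_\Nc\psi_k\Meg 1$ enter; your argument never invokes either hypothesis, which is a sign that the step as written cannot close. Second, Young's inequality $\norm{f*g}_{L^p}\meg\norm{f}_{L^p}\norm{g}_{L^1}$ fails for $p<1$, while the equivalence is needed (and immediately used, in the definition of $B^{\vs}_{p,q}(\Nc,\Omega)$) for all $p\in\left]0,\infty\right]$. For $p<1$ one must replace Young by a Plancherel--P\'olya/Nikolskii-type inequality for distributions whose group Fourier transform is supported in a compact subset of $\Omega'$, with constants controlled uniformly under the $T_+$-action; on the non-abelian group $\Nc$ this is a nontrivial auxiliary lemma in its own right (essentially the role of Lemma~4.14 of the cited reference) and cannot be passed over. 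Your closing paragraph on uniformity identifies the right concern but addresses neither of these two points.
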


\begin{deff}
Take $\vs\in \R^r$, $p,q\in ]0,\infty]$. Given
$(\lambda_k)_{k\in K}$ and $(\psi_k)$ as in
Proposition~\ref{prop:4.2}, we define $B^{\vs}_{p,q}(\Nc,\Omega)$ as the space of $u\in \Sc_{\Omega,L}'(\Nc)$ such that
\[
(\Delta^{\vs}_{\Omega'}(\lambda_k) (u*\psi_k))_k\in  \ell^q (K;L^p(\Nc)),
\]
endowed with the corresponding topology. We denote by  $\mathring{B}^{\vs}_{p,q}(\Nc,\Omega)$ the closure of (the canonical image of) $\Sc_{\Omega,L}(\Nc,\Omega)$ in $B^{\vs}_{p,q}(\Nc,\Omega)$, which can be described as the space of the $u\in\Sc_{\Omega,L}'(\Nc)$ such that  
\[
(\Delta^{\vs}_{\Omega'}(\lambda_k) (u*\psi_k))_k\in \ell^q_0(K; L^p_0(\Nc)).
\]
(cf,~\cite[Theorem 4.23]{CalziPeloso}).
\end{deff}

See~\cite[Proposition 4.20 and Theorem 4.23]{CalziPeloso} for a proof
of the following result.  Here and in what follows, we put
$p'\coloneqq \max(1,p)'$ for every $p\in ]0,\infty]$, so that
$p'=\infty$ if $p\meg 1$, and $\frac 1 p+\frac{1}{p'}=1$ if $p\Meg
1$.  

\begin{prop}
Take $p,q\in ]0,\infty]$ and $\vs\in \R^r$. Then,  the canonical sesquilinear pairings on $\Sc_{\Omega,L}(\Nc)\times \Sc_{\Omega,L}'(\Nc)$  and on $\Sc'_{\Omega,L}(\Nc)\times \Sc_{\Omega,L}(\Nc)$ induce  unique continuous sesquilinear pairings
\[
\langle\,\cdot\,\vert \,\cdot\,\rangle\colon \mathring B^{\vs}_{p,q}(\Nc,\Omega)\times B^{-\vs-(1/p-1)_+(\vect b+\vect d)}_{p',q'}(\Nc,\Omega)\to \C,
\]
and
\[
\langle\,\cdot\,\vert \,\cdot\,\rangle\colon  B^{\vs}_{p,q}(\Nc,\Omega)\times \mathring B^{-\vs-(1/p-1)_+(\vect b+\vect d)}_{p',q'}(\Nc,\Omega)\to \C,
\]
respectively.
\end{prop}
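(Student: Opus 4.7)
The plan is to reduce the statement to the lattice characterization of the Besov spaces provided by Proposition~\ref{prop:4.2}. Fix a $(\delta,R)$-lattice $(\lambda_k)_{k\in K}$ on $\Omega'$ and a family $(\psi_k)\subset \Sc_\Omega(\Nc)$ as in that proposition with $\sum_k \Fc_\Nc \psi_k\Meg 1$ on $\Omega'$. Using the isomorphism in Proposition~\ref{prop:4.2}(1), select an auxiliary family $(\tilde\psi_k)\subset \Sc_\Omega(\Nc)$, subject to the same uniformity, such that $\sum_k (\Fc_\Nc\tilde\psi_k)(\Fc_\Nc\psi_k)=1$ on $\Omega'$; this yields a Calderón-type reproducing formula $u=\sum_k u* \psi_k* \tilde\psi_k$ in $\Sc'_{\Omega,L}(\Nc)$.

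For $\varphi\in \Sc_{\Omega,L}(\Nc)$ and $v\in \Sc'_{\Omega,L}(\Nc)$, inserting the reproducing formula into the canonical pairing and moving convolutions across gives
\[
|\langle \varphi|v\rangle|\meg \sum_k |\langle \varphi* \psi_k\,|\,v* \tilde\psi_k^*\rangle|,
\]
with $\tilde\psi_k^*(x)\coloneqq\overline{\tilde\psi_k(x^{-1})}$. When $p\Meg 1$, Hölder's inequality in $L^p(\Nc)$ followed by Hölder's inequality in $\ell^q(K)$, together with the weights $\Delta^{\pm \vs}_{\Omega'}(\lambda_k)$, bounds the right-hand side by the product of the $B^{\vs}_{p,q}$-norm of $\varphi$ and the $B^{-\vs}_{p',q'}$-norm of $v$, so no shift is needed. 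When $p<1$ the pointwise Hölder step fails and must be replaced by a Bernstein-type inequality exploiting the fact that $\varphi*\psi_k$ is band-limited near $\lambda_k$ (since $\pi_\lambda(\psi_k)=0$ off a small neighbourhood of $\lambda_k$ in $\Omega'$): one has
\[
\|\varphi* \psi_k\|_{L^{p'}(\Nc)}\meg C\, \Delta^{(\vb+\vd)(1/p-1)}_{\Omega'}(\lambda_k)\|\varphi* \psi_k\|_{L^{p}(\Nc)}
\]
uniformly in $k$, which yields the bound by the product of a $B^\vs_{p,q}$-norm and a $B^{-\vs-(1/p-1)(\vect b+\vect d)}_{p',q'}$-norm. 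Equivalence of the resulting quasi-norm with the definition of $B^{\vs}_{p,q}(\Nc,\Omega)$ is exactly Proposition~\ref{prop:4.2}(2), so the bound is independent of the lattice and of the family $(\psi_k)$.

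Having obtained a continuous bound, the pairing extends to $\Sc_{\Omega,L}(\Nc)\times B^{-\vs-(1/p-1)_+(\vect b+\vect d)}_{p',q'}(\Nc,\Omega)$ and then, by the defining density of the image of $\Sc_{\Omega,L}(\Nc)$ in $\mathring B^{\vs}_{p,q}(\Nc,\Omega)$, uniquely to $\mathring B^{\vs}_{p,q}(\Nc,\Omega)\times B^{-\vs-(1/p-1)_+(\vect b+\vect d)}_{p',q'}(\Nc,\Omega)$; uniqueness of a continuous extension from a dense subspace is automatic. The second pairing is obtained by exchanging the two factors, the argument being entirely symmetric once the reproducing formula is applied to $v\in \Sc_{\Omega,L}(\Nc)$ rather than to $\varphi$.

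The main obstacle is the Bernstein inequality in the $p<1$ regime with the sharp dependence $\Delta^{(\vb+\vd)(1/p-1)}_{\Omega'}(\lambda_k)$. One must transfer an estimate at a fixed frequency $\lambda\in\Omega'$ to $\lambda_k$ via the $T_+$-action, tracking simultaneously how the Plancherel density $\Delta_{\Omega'}^{-\vb}$ and the $L^p(\Nc)$-norm scale under dilations; the combined scaling produces precisely the exponent $\vb+\vd$, and hence the shift $(1/p-1)_+(\vect b+\vect d)$ in the statement. A secondary technical point is absolute convergence of the series $\sum_k\langle \varphi*\psi_k|v*\tilde\psi_k^*\rangle$, which follows for $\varphi\in\Sc_{\Omega,L}(\Nc)$ from the rapid decay of $\Fc_\Nc\varphi$ off compacts of $\Omega'$ combined with the bounded-overlap property of $(\Fc_\Nc\psi_k)$ relative to the lattice.
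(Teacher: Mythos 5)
The paper does not prove this proposition; it is quoted from \cite{CalziPeloso} (Proposition 4.20 and Theorem 4.23), and the route you take --- a discrete Calder\'on reproducing formula $u=\sum_k u*\psi_k*\tilde\psi_k$ adapted to a lattice on $\Omega'$, followed by H\"older in $L^p(\Nc)$ and in $\ell^q(K)$, with a Plancherel--P\'olya/Nikolskii inequality supplying the shift $(1/p-1)_+(\vb+\vd)$ when $p<1$, and extension by density of $\Sc_{\Omega,L}(\Nc)$ in the $\mathring B$ factor --- is exactly the standard mechanism behind the cited results, so in substance your argument is the expected one. Your identification of where the exponent $\vb+\vd$ comes from (the combined scaling of the Plancherel density $\Delta_{\Omega'}^{-\vb}$ and of Lebesgue measure on $\Nc$ under the $T_+$-action) is also the right accounting.

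One local step is mis-stated, however. For $p<1$ you have $p'=\infty$, so the second factor is measured by $\Delta_{\Omega'}^{-\vs-(1/p-1)(\vb+\vd)}(\lambda_k)\norm{v*\tilde\psi_k^*}_{L^\infty(\Nc)}$, and the H\"older pairing $\abs{\langle \varphi*\psi_k\vert v*\tilde\psi_k^*\rangle}\meg \norm{\varphi*\psi_k}_{L^1}\norm{v*\tilde\psi_k^*}_{L^\infty}$ therefore requires a bound on $\norm{\varphi*\psi_k}_{L^1(\Nc)}$, not on $\norm{\varphi*\psi_k}_{L^{p'}(\Nc)}=\norm{\varphi*\psi_k}_{L^\infty(\Nc)}$ as you wrote. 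The inequality you need is the $L^p\to L^1$ Nikolskii estimate, whose gain is $V_k^{1/p-1}$ with $V_k$ the ``spectral volume'' near $\lambda_k$; the exponent $(1/p-1)(\vb+\vd)$ you display is indeed the one attached to that estimate, whereas the $L^p\to L^\infty$ version would carry $V_k^{1/p}$ and would force you to pair against $\norm{v*\tilde\psi_k^*}_{L^1}$, landing in the wrong dual space. The fix is purely notational (replace $L^{p'}$ by $L^1$ on the left of your displayed Bernstein inequality, or derive the $L^1$ bound from the $L^\infty$ one via $\norm{f}_1\meg\norm{f}_p^p\norm{f}_\infty^{1-p}$), but as written the displayed inequality and the conclusion you draw from it do not match.
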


We may now introduce an extension operator from some of the spaces $B^{p,q}_{-\vs}(\Nc,\Omega)$ into suitable weighted Bergman spaces.

\begin{deff}
Take $p,q\in ]0,\infty]$ and $\vs\in \frac 1 p (\vb+\vd)+\frac{1}{2 q'}\vm'+(\R_+^*)^r$, and observe that $S_{(\zeta,z)}\in \mathring B^{\vs-(1/p-1)_+(\vb+\vd)}_{p',q'}(\Nc,\Omega)$ by~\cite[Lemma 5.1]{CalziPeloso}. Define a continuous linear mapping $\Ec\colon B^{p,q}_{-\vs}(\Nc,\Omega)\to A^{\infty,\infty}_{\vs-(\vb+\vd)/p}(D)$ by
\[
\Ec u(\zeta,z)\coloneqq \langle u  \vert S_{(\zeta,z)}\rangle
\]
for every $(\zeta,z)\in D$. Define
\[
\widetilde A^{p,q}_\vs(D)\coloneqq \Ec(B^{-\vs}_{p,q}(\Nc,\Omega)) \qquad \text{and} \qquad \widetilde A^{p,q}_{\vs,0}(D)\coloneqq \Ec(\mathring B^{-\vs}_{p,q}(\Nc,\Omega)),
\]
endowed with the corresponding (image) topology.
\end{deff}

Cf.~\cite[Theorem 5.2, Proposition 5.4 and its proof, and Corollary 5.11]{CalziPeloso} for a proof of the following result.

\begin{prop}\label{prop:23bis}
Take $p,q\in ]0,\infty]$, and $\vs\in 
\frac{1}{p}(\vb+\vd)+\frac{1}{2 q'}\vect{m'}+(\R_+^*)^r $. 
Then, the following hold:
\begin{enumerate}
\item[\em(1)] $(\Ec u)_h$ converges to $u$ in
$B^{-\vs}_{p,q}(\Nc,\Omega)$ (resp.\ in $\Sc'_{\Omega,L}(\Nc)$)
for everu $u\in \mathring B^{-\vs}_{p,q}(\Nc,\Omega)$ (resp.\ for
every $u\in B^{-\vs}_{p,q}(\Nc,\Omega)$);  

\item[\em(2)] if, in addition, $\vs\in \frac{1}{2 q}\vect m+(\R_+^*)^r $, then there are continuous inclusions
\[
\Ec(\Sc_{\Omega,L}(\Nc))\subseteq A^{p,q}_{\vs}(D) \subseteq
\widetilde A^{p,q}_{\vs}(D)
\qquad
(\text{resp.\ } \Ec(\Sc_{\Omega,L}(\Nc))\subseteq A^{p,q}_{\vs,0}(D) \subseteq \widetilde A^{p,q}_{\vs,0}(D))  ;
\]

\item[\em(3)] if, further,$\vs\in \frac{1}{2 q}\vect m+\big( \frac{1}{2 \min(p,p')}-\frac{1}{2q} \big)_+\vect{m'}  +(\R_+^*)^r,$
then,
\[
A^{p,q}_{\vs}(D)=\widetilde A^{p,q}_{\vs}(D)  \qquad\text{and} \qquad
A^{p,q}_{\vs,0}(D)=\widetilde A^{p,q}_{\vs,0}(D).
\]
\end{enumerate}
\end{prop}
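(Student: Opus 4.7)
The proof is an assembly of results from the companion monograph \cite{CalziPeloso}, organized around the identity $(\Ec u)_h(\zeta,x)=\langle u\,\vert\, S_{(\zeta,x+i\Phi(\zeta)+ih)}\rangle$. Via the Bargmann picture \eqref{eq:pi-lambda} and the formula $S_{(\zeta,z)}=c'(B^{\vb+\vd}_{(\zeta,z)})_0$, this identifies the leaf restriction $(\Ec u)_h$ with the convolution of $u$ by a kernel whose operator-valued Fourier transform on $\Omega'$ is, up to scalar factors, $\lambda\mapsto \ee^{-\langle\lambda,h\rangle}P_{\lambda,0}$, an approximate identity as $h\to 0$.

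For (1), I would first test on $\psi\in \Sc_{\Omega,L}(\Nc)$: by Proposition \ref{prop:4.2}(1) and the Fourier-side description above, $(\Ec\psi)_h$ is the convolution of $\psi$ with a mollifier, so $(\Ec\psi)_h\to\psi$ in $B^{-\vs}_{p,q}(\Nc,\Omega)$ by Proposition \ref{prop:4.2}(2). The pointwise bound $\ee^{-\langle\lambda,h\rangle}\meg 1$ together with Plancherel gives a uniform-in-$h$ bound for $u\mapsto (\Ec u)_h$, and density of $\Sc_{\Omega,L}(\Nc)$ in $\mathring B^{-\vs}_{p,q}$ promotes the convergence to the whole closure. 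The weak-star convergence in $\Sc'_{\Omega,L}$ for general $u\in B^{-\vs}_{p,q}$ then follows by transposition against the first case applied to $\mathring B^{\vs+(1/p-1)_+(\vb+\vd)}_{p',q'}$.

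For (2), the inclusion $\Ec(\Sc_{\Omega,L}(\Nc))\subseteq A^{p,q}_\vs(D)$ is a direct norm computation: compact Fourier support of $\psi$ on $\Omega'$ forces exponential decay of $\|(\Ec\psi)_h\|_{L^p(\Nc)}$ as $h$ escapes to infinity in $\Omega$, while the constraint $\vs\in \frac{1}{2q}\vm+(\R_+^*)^r$ is exactly what ensures $\Delta_\Omega^{q\vs}\,\dd\nu_\Omega$ is integrable near the faces of $\Omega$. The reverse inclusion $A^{p,q}_\vs(D)\subseteq \widetilde A^{p,q}_\vs(D)$ requires producing a boundary value $f_0\in B^{-\vs}_{p,q}(\Nc,\Omega)$ with $\Ec f_0=f$ for each $f\in A^{p,q}_\vs$. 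Using the identity $\pi_\lambda(f_h)=\pi_\lambda(f_h)P_{\lambda,0}$ together with the support and decay of $\pi_\lambda(f_h)$ in $\lambda$ for $h\in\Omega$ recalled in Section \ref{sec:2}, one writes $f_h$ as the convolution of a candidate distribution $f_0\in\Sc'_{\Omega,L}$ with the above approximate identity, and controls $\|f_0\|_{B^{-\vs}_{p,q}}$ by the mixed-norm Bergman norm of $f$ via Proposition \ref{prop:4.2}(2).

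The only genuinely new content in (3) is the continuous inclusion $\widetilde A^{p,q}_\vs\subseteq A^{p,q}_\vs$, i.e., $\|\Ec u\|_{A^{p,q}_\vs}\lesssim \|u\|_{B^{-\vs}_{p,q}}$. I would fix a $(\delta,R)$-lattice $(\lambda_k)$ on $\Omega'$ with subordinate family $(\psi_k)$ as in Proposition \ref{prop:4.2}, decompose $u=\sum_k u*\psi_k$, and on the leaf at height $h$ apply Young's convolution inequality on $\Nc$ together with the Plancherel-side kernel bound $\|\pi_\lambda(S_{(\zeta,\,\cdot\,+ih+i\Phi(\zeta))})\|\lesssim \Delta_{\Omega'}^{\vb+\vd}(\lambda)\ee^{-\langle\lambda,h\rangle}$. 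After lattice discretization this yields $\|\Ec(u*\psi_k)_h\|_{L^p(\Nc)}\lesssim\|u*\psi_k\|_{L^p(\Nc)}\Delta_{\Omega'}^{\vb+\vd}(\lambda_k)^{\theta}\ee^{-c\langle\lambda_k,h\rangle}$, where $\theta$ is the Young exponent; summing in $k$ and integrating $\Delta_\Omega^{q\vs}\,\dd\nu_\Omega(h)$ converts the Besov norm into the Bergman norm exactly when $\vs\in\frac{1}{2q}\vm+\bigl(\frac{1}{2\min(p,p')}-\frac{1}{2q}\bigr)_+\vect{m'}+(\R_+^*)^r$. I expect this estimate at the sharp Young threshold to be the main obstacle, since it relies on the off-diagonal Cauchy--Szeg\H{o} kernel estimates and the $\Gamma_\Omega,\Gamma_{\Omega'}$ asymptotics worked out in \cite{CalziPeloso}.
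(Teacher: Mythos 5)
The paper does not actually prove this proposition: it is quoted from the companion monograph, with the proof deferred to \cite[Theorem 5.2, Proposition 5.4 and its proof, and Corollary 5.11]{CalziPeloso}. So there is no in-paper argument to compare yours against; what can be judged is whether your sketch would survive being written out. Its architecture --- test on $\Sc_{\Omega,L}(\Nc)$ plus equicontinuity plus density for (1), a direct norm computation and a boundary-value construction for (2), a lattice decomposition with a Young-type estimate for (3) --- is indeed the strategy of the cited results, and the heuristic you give for where each hypothesis on $\vs$ enters is correct.

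Several of your individual justifications do not hold as stated, however. First, in (1) the claim that ``the pointwise bound $\ee^{-\langle\lambda,h\rangle}\meg 1$ together with Plancherel gives a uniform-in-$h$ bound'' only works for $p=2$: the Besov quasi-norm is built from $L^p(\Nc)$ norms of $u*\psi_k$, and a pointwise bound on the operator-valued Fourier transform yields no $L^p$ convolution bound for $p\neq 2$. What is actually needed is a uniform $L^1(\Nc)$ bound on suitably normalized kernels of the form $\big(B^{\vb+\vd}\big)_h*\psi_k$ --- a genuine estimate on the Cauchy--Szeg\H{o} kernel, not a Fourier triviality. Second, Proposition~\ref{prop:4.2}(2) asserts that the Besov quasi-norm is independent of the choice of lattice and mollifiers; it gives neither the convergence $(\Ec\psi)_h\to\psi$ nor the bound $\norm{f_0}_{B^{-\vs}_{p,q}}\lesssim\norm{f}_{A^{p,q}_{\vs}}$ that you invoke it for --- the latter is the substantive Paley--Wiener half of \cite[Theorem 5.2]{CalziPeloso}. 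Third, your construction of the boundary value in (2) leans on the identity $\pi_\lambda(f_h)=\pi_\lambda(f_h)P_{\lambda,0}$, which is available only for $p\meg 2$; for $p>2$ the group Fourier transform of $f_h\in L^p(\Nc)$ is not defined, and the boundary value must be produced by duality or from the semigroup property of the leaf restrictions combined with the $p\meg 2$ case. Each of these points is fixable, but each is a separate lemma in the monograph rather than a routine step, so as written your proposal is an outline of the right proof rather than a proof.
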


We can now present a sufficient condition for the continuity of the projectors $P_\vs$. See~\cite[Proposition 5.21 and Corollary 5.26]{CalziPeloso} for a proof of the following result.

\begin{prop}\label{prop:26}
Take $p,q\in [1,\infty]$ and $\vs,\vs'\in \R^r$ such that the following hold:
\begin{itemize}
\itb $\vs\in \sup \left( \frac{1}{2 q} \vm, \frac 1 p (\vb+\vd)+\frac{1}{2 q'}\vm' \right)+(\R_+^*)^r$;

\itb $\vs+\vs'\in \inf\left( \vb+\vd-\frac{1}{2 q'}\vm, \frac{1}{p}(\vb+\vd)-\frac{1}{2q}\vm' \right)-(\R_+^*)^r$;

\itb $A^{p,q}_{\vs,0}(D)=\widetilde A^{p,q}_{\vs,0}(D)$;

\itb $A^{p,q}_{\vb+\vd-\vs-\vs',0}(D)=\widetilde A^{p,q}_{\vb+\vd-\vs-\vs',0}(D)$.
\end{itemize}
Then, $P_{\vs'}$ induces a continuous linear projector of $L^{p,q}_{\vs,0}(D)$ onto $A^{p,q}_{\vs,0}(D)$.
\end{prop}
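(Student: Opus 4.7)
The plan is to route the proof through the Besov-space identification $A^{p,q}_{\vs,0}(D)=\widetilde A^{p,q}_{\vs,0}(D)$ given in the hypotheses, so that boundedness of $P_{\vs'}$ becomes a norm estimate for the extension operator $\Ec$ on $\mathring B^{-\vs}_{p,q}(\Nc,\Omega)$. I will proceed in three stages: pointwise well-posedness of $P_{\vs'}f$; factorisation $P_{\vs'}f=\Ec u$ with controlled Besov norm; and the reproducing identity $P_{\vs'}|_{A^{p,q}_{\vs,0}}=\mathrm{id}$, which will simultaneously deliver surjectivity and the projector property.

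For well-posedness, the upper bound on $\vs+\vs'$ is exactly what makes $(\zeta',z')\mapsto B^{\vs'}_{(\zeta',z')}(\zeta,z)\Delta^{-\vs'}_\Omega(\rho(\zeta',z'))$ lie in the $L^{p',q'}$-dual of $L^{p,q}_{\vs,0}(D)$ (via the standard integrability estimates for generalized powers, with the correction $(1/p-1)_+(\vb+\vd)$ in the quasi-Banach range); Hölder then makes $P_{\vs'}f$ a pointwise, locally uniformly controlled, holomorphic function on $D$. For the crucial factorisation, the Bergman kernel $B^{\vs'}_{(\zeta',z')}$ can be rewritten (by matching Laplace transforms on $\Omega$) as a $\rho(\zeta',z')$-weighted translate of the Cauchy--Szeg\H o kernel $S_{(\zeta,z)}$, so that integrating $f$ against it first along the $\Nc$-fibres yields a single distribution $u\in \Sc'_{\Omega,L}(\Nc)$ with $P_{\vs'}f=\Ec u$. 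I would then characterise $\|u\|_{B^{-\vs}_{p,q}}$ via Proposition~\ref{prop:4.2} by convolution with a Littlewood--Paley family $(\psi_k)$ attached to a lattice $(\lambda_k)$ on $\Omega'$, and reduce the desired estimate to a weighted Schur inequality on $\Omega\times\Omega'$; the two-sided condition on $(\vs,\vs+\vs')$ is precisely the range in which test weights of the form $\Delta^\alpha_\Omega$ make both halves of Schur's test converge (one against the Plancherel weight $\Delta^{-\vb}_{\Omega'}$, the other against the kernel decay).

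Finally, the reproducing identity is obtained by density: Proposition~\ref{prop:23bis}(1) gives $\Ec(\Sc_{\Omega,L}(\Nc))$ dense in $A^{p,q}_{\vs,0}(D)$, and on this subspace the Fourier data is a Schwartz function with compact support in $\Omega'$, so Fubini is legitimate; the explicit formula
\[
\int_\Omega \Delta_\Omega^{2\vs'-\vb-\vd}(h)\,\ee^{-2\langle\lambda,h\rangle}\,\dd\nu_\Omega(h)=\Gamma_\Omega(2\vs'-\vb-\vd)\,\Delta_{\Omega'}^{\vb+\vd-2\vs'}(\lambda)
\]
then reduces $P_{\vs'}f=f$ to projection onto constants via $P_{\lambda,0}$ in the Bargmann representation. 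The hard part will be the Schur step: packaging all four hypotheses (both lower bounds on $\vs$, both upper bounds on $\vs+\vs'$, and the two $\widetilde A=A$ identifications) into a single test weight that simultaneously controls both halves of the mixed-norm Schur inequality requires delicate book-keeping of $\vm,\vm',\vb,\vd$ against $(p,q)$, and is exactly what forces the asymmetric form of the hypotheses.
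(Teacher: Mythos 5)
You should first be aware that the paper does not actually prove Proposition~\ref{prop:26}: it is quoted from the authors' memoir, with the proof delegated to [CalziPeloso, Proposition 5.21 and Corollary 5.26], so there is no in-paper argument to compare against step by step. That said, your roadmap --- represent $P_{\vs'}f$ through a boundary distribution $u$ with $P_{\vs'}f=\Ec u$, estimate $\norm{u}_{B^{-\vs}_{p,q}}$ via the Littlewood--Paley characterisation of Proposition~\ref{prop:4.2}, reduce to a weighted Schur/Young inequality for the kernel $(h,\lambda)\mapsto \ee^{-\langle\lambda,h\rangle}$ on $\Omega\times\Omega'$, and obtain the projector property by checking $P_{\vs'}=\mathrm{id}$ on a dense subspace --- is consistent with the strategy of that reference, and your identification of the analytic core (the $\Gamma_\Omega$ and $\Gamma_{\Omega'}$ integrability conditions encoded in the first two bullets) is correct.

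As it stands, however, the proposal has a genuine gap, on two counts. First, the central quantitative step is only announced: you defer ``the Schur step'' to future bookkeeping, but that step \emph{is} the proof; nothing is established without it. Second, and more substantively, you propose to package ``all four hypotheses\dots into a single test weight'' for the Schur inequality. That cannot work for the two identifications $A^{p,q}_{\vs,0}(D)=\widetilde A^{p,q}_{\vs,0}(D)$ and $A^{p,q}_{\vb+\vd-\vs-\vs',0}(D)=\widetilde A^{p,q}_{\vb+\vd-\vs-\vs',0}(D)$: these are qualitative statements identifying function spaces, not numerical parameter ranges, and they contribute nothing to the convergence of a Schur test. They must enter elsewhere. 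The first is what upgrades the conclusion ``$P_{\vs'}f\in\widetilde A^{p,q}_{\vs,0}(D)$'' (which is all the Besov-side estimate can deliver) to ``$P_{\vs'}f\in A^{p,q}_{\vs,0}(D)$'', and it also yields the density of $\Ec(\Sc_{\Omega,L}(\Nc))$ needed in your third stage --- note that the correct source for that density is Proposition~\ref{prop:23bis}(2) combined with the definition $\widetilde A^{p,q}_{\vs,0}(D)=\Ec(\mathring B^{-\vs}_{p,q}(\Nc,\Omega))$, not Proposition~\ref{prop:23bis}(1). The second, involving the dual parameter $\vb+\vd-\vs-\vs'$ attached to the pairing $(f,g)\mapsto\int_D f\overline g\,(\Delta_\Omega^{-\vs'}\circ\rho)\,\dd\nu_D$, is needed for the duality/reproducing-formula half of the argument and is invisible in your outline. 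Until you carry out the Schur estimate and account for where each of the third and fourth bullets is used, this is a plan rather than a proof.
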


We conclude this section with some remarks on the atomic decomposition of the spaces $A^{p,q}_\vs(D)$, and its connection with the spaces $\widetilde A^{p,q}_\vs(D)$.

\begin{deff}\label{def:L-property}
We say that property $\atomic^{p,q}_{\vs,\vect{s'},0}$ (resp.\
$\atomic^{p,q}_{\vs,\vect{s'}}$) holds if for every $\delta_0>0$
there is a $(\delta,4)$-lattice $(\zeta_{j,k},z_{j,k})_{j\in J, k\in K}$, with $\delta\in ]0,\delta_0]$,  such that, defining $h_k\coloneqq \rho(\zeta_{j,k},z_{j,k})$ for every $k\in K$ andfor some (hence every) $j\in J$, the mapping 
\[
\Psi\colon\lambda \mapsto \sum_{j,k} \lambda_{j,k}
B_{(\zeta_{j,k},z_{j,k})}^{\vect{s'}} \Delta_\Omega^{(\vb+\vect
d)/p-\vs-\vect{s'}}(h_k) 
\]
is well defined (with locally uniform convergence of the sum) and maps
$\ell^{p,q}_0(J,K) $ into $A^{p,q}_{\vs,0}(D)$ continuously
(resp.\ maps $\ell^{p,q}(J,K) $ into $A^{p,q}_{\vs}(D)$
continuously).\footnote{Define
\[
\ell^{p,q}(J,K)\coloneqq \Set{\lambda\in \C^{J\times K}\colon ((\lambda_{j,k})_{j\in J})_{k\in K}\in \ell^q(K;\ell^p(J))},
\]
endowed with the corresponding quasi-norm, and define
$\ell^{p,q}_0(J,K)$ as the closure of $\C^{(J\times K)}$ in
$\ell^{p,q}(J,K)$.} 
We say that property $\atomics^{p,q}_{\vect s,\vect{s'},0}$ (resp.\ $\atomics^{p,q}_{\vs,\vect{s'}}$) holds if, for every $\delta_0>0$ as above, we may take $(\zeta_{j,k},z_{j,k})_{j\in J, k\in K}$ in such a way that the corresponding mapping $\Psi$ is onto.  
\end{deff}

See~\cite[Corollaries 5.14 and 5.16]{CalziPeloso} for a proof of the following result.

\begin{prop}\label{prop:25}
Take $p,q\in ]0,\infty]$, $\vs\in \sup\Big( \frac{1}{2 q}\vect m,
\frac{1}{p}(\vb+\vd)+\frac{1}{2 q'}\vect{m'}\Big)+(\R_+^*)^r $. Then, the following hold:
\begin{itemize}
\itb if $A^{p,q}_{\vect s,0}(D)=\widetilde A^{p,q}_{\vs,0}(D)$
(resp.\ $A^{p,q}_{\vect s}(D)=\widetilde A^{p,q}_{\vs}(D)$), then property $\atomics^{p,q}_{\vs, \vs',0}$ (resp.\ $\atomics^{p,q}_{\vs,\vs'}$) holds for every $\vs'\in \frac{1}{\min(1,p)}(\vect b+\vect d)-\frac{1}{2 q}\vect{m'} -
\left(\frac{1}{2 \min(1,p)}-\frac{1}{2q}  \right)_+\vect m -
\vs-(\R_+^*)^r$;

\itb if property $\atomic^{p,q}_{\vs, \vs',0}$ (resp.\ $\atomic^{p,q}_{\vs,\vs'}$) holds for every $\vs'$ in a translate of $-\R_+^r$, then $A^{p,q}_{\vect s,0}(D)=\widetilde A^{p,q}_{\vs,0}(D)$
(resp.\ $A^{p,q}_{\vect s}(D)=\widetilde A^{p,q}_{\vs}(D)$).
\end{itemize}
\end{prop}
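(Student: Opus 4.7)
The plan is to combine atomic decomposition theory on the Besov spaces $B^{-\vs}_{p,q}(\Nc,\Omega)$ with the extension operator $\Ec$, transferring atoms between the Besov and Bergman sides. The key observation is that, for a sufficiently fine $(\delta,4)$-lattice $(\zeta_{j,k},z_{j,k})_{j\in J, k\in K}$ with associated lattice $(h_k)$ on $\Omega$, elements of $\mathring B^{-\vs}_{p,q}(\Nc,\Omega)$ admit discretizations into atoms $\mu_{j,k}$ concentrated near the points $(\zeta_{j,k},x_{j,k})$ on $\Nc$, with quasi-norm equivalence in $\ell^{p,q}_0$ for the coefficients; and $\Ec$ applied to such an atom produces, up to a fixed normalization, the kernel $B^{\vs'}_{(\zeta_{j,k},z_{j,k})}$ multiplied by the power $\Delta_\Omega^{(\vb+\vd)/p-\vs-\vs'}(h_k)$. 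This identification underlies both implications.

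For the first implication, take an arbitrary $f\in A^{p,q}_{\vs,0}(D)$. By the assumed equality $A^{p,q}_{\vs,0}(D)=\widetilde A^{p,q}_{\vs,0}(D)$, write $f=\Ec u$ with $u\in \mathring B^{-\vs}_{p,q}(\Nc,\Omega)$. Decompose $u$ atomically with coefficients $\lambda\in\ell^{p,q}_0(J,K)$ controlled by $\|u\|_{B^{-\vs}_{p,q}}$, and apply $\Ec$ term by term (justified by its continuity into $A^{\infty,\infty}_{\vs-(\vb+\vd)/p}(D)$) to obtain $f=\Psi(\lambda)$, where $\Psi$ is the synthesis map of Definition~\ref{def:L-property}. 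Continuity of $\Psi$ into $A^{p,q}_{\vs,0}(D)$ then follows from the chain of estimates $\|\Psi(\lambda)\|_{A^{p,q}_{\vs,0}}=\|\Ec u\|_{A^{p,q}_{\vs,0}}\lesssim \|u\|_{B^{-\vs}_{p,q}}\lesssim \|\lambda\|_{\ell^{p,q}_0}$, while surjectivity is built into the construction. The constraint on $\vs'$ in the statement is precisely the range for which $B^{\vs'}_{(\zeta_{j,k},z_{j,k})}$ is itself the $\Ec$-image of a bona fide Besov atom, which in turn is governed by Proposition~\ref{prop:23bis}(2) applied with exponents dual to $(p,q)$.

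For the second implication, the inclusion $A^{p,q}_{\vs,0}(D)\subseteq \widetilde A^{p,q}_{\vs,0}(D)$ is already supplied by Proposition~\ref{prop:23bis}(2), so only the reverse inclusion is new. Given $f=\Ec u\in\widetilde A^{p,q}_{\vs,0}(D)$, the Besov atomic decomposition of $u$ yields coefficients $\lambda\in\ell^{p,q}_0$ such that $f=\Psi(\lambda)$ for the synthesis map attached to some admissible $\vs'$. Since $\atomic^{p,q}_{\vs,\vs',0}$ is assumed to hold for every $\vs'$ in a translate of $-\R_+^r$, one may select such a $\vs'$ from this set; continuity of $\Psi$ into $A^{p,q}_{\vs,0}(D)$ then places $f$ in the Bergman space, finishing the proof. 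The non-zero case ($\atomics$, respectively $\atomic$) is handled identically, with $\ell^{p,q}_0$ and $\mathring B^{-\vs}_{p,q}$ replaced by $\ell^{p,q}$ and $B^{-\vs}_{p,q}$.

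The main obstacle is the precise identification of $\Ec\mu_{j,k}$ with a Bergman kernel and the bookkeeping of the parameter shifts. Constructing the atomic decomposition on $\mathring B^{-\vs}_{p,q}(\Nc,\Omega)$ relies on the representation-theoretic structure highlighted in Proposition~\ref{prop:4.2}: the functions $\psi_k$ there provide a Calder\'on-type reproducing formula which, after discretization along a lattice on $\Nc$ compatible with $(\lambda_k)$, produces atoms whose Bargmann-image aligns with the rank-one projector $P_{\lambda,0}$ intrinsic to the Cauchy--Szeg\H o kernel defining $\Ec$. It is this alignment which makes $\Ec$ of a Besov atom a genuine scalar multiple of a Bergman kernel, and the resulting homogeneity under the action of $T_+$ is what forces the exponent $(\vb+\vd)/p-\vs-\vs'$ appearing in Definition~\ref{def:L-property}. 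Beyond this combinatorial and representation-theoretic matching, both implications are direct transfers of the classical atomic theory for anisotropic Besov spaces.
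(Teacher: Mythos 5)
The paper does not actually prove this proposition; it quotes it from \cite[Corollaries 5.14 and 5.16]{CalziPeloso}, so there is no in-paper argument to compare against line by line. Judged on its own merits, your proposal has a genuine gap at its core. Everything rests on the asserted lemma that every $u\in\mathring B^{-\vs}_{p,q}(\Nc,\Omega)$ decomposes as $\sum_{j,k}\lambda_{j,k}\mu_{j,k}$ with $\lambda\in\ell^{p,q}_0(J,K)$, quasi-norm equivalence, and with $\Ec\mu_{j,k}$ a fixed multiple of $B^{\vs'}_{(\zeta_{j,k},z_{j,k})}\Delta_\Omega^{(\vb+\vd)/p-\vs-\vs'}(h_k)$. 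Since $\Ec$ followed by the boundary-value map is essentially the identity on the relevant spaces, this forces the atoms $\mu_{j,k}$ to be the boundary values $\big(B^{\vs'}_{(\zeta_{j,k},z_{j,k})}\big)_0$, and the lemma then says precisely that these distributions form an atomic system (synthesis bound \emph{and} surjective analysis) for $\mathring B^{-\vs}_{p,q}(\Nc,\Omega)$ --- which, transported by $\Ec$, \emph{is} the statement $\atomics^{p,q}_{\vs,\vs',0}$ for $\widetilde A^{p,q}_{\vs,0}(D)$. You have renamed the problem rather than solved it. In particular, your continuity chain $\norm{\Psi(\lambda)}\lesssim\norm{u}\lesssim\norm{\lambda}$ is only meaningful once the synthesis bound is known for \emph{arbitrary} $\lambda\in\ell^{p,q}_0(J,K)$, not merely for coefficient sequences obtained by decomposing some $u$; as written, the argument establishes surjectivity of $\Psi$ on its image but not that $\Psi$ is everywhere defined and bounded, which is what $\atomic$ and $\atomics$ require.

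Moreover, the mechanism you propose for producing the atoms cannot yield the required ones. Discretizing a Calder\'on-type reproducing formula built from the functions $\psi_k$ of Proposition~\ref{prop:4.2} produces atoms that are translates of the $\psi_k$, i.e.\ elements of $\Sc_\Omega(\Nc)$ whose Fourier transforms are compactly supported in $\Omega'$; by contrast, the boundary values of $B^{\vs'}_{(\zeta,z)}$ have Laplace--Fourier transforms of the form $c\,\chi_{\Omega'}(\lambda)\Delta_{\Omega'}^{\vs''}(\lambda)\ee^{-\langle\lambda,\,\cdot\,\rangle}\pi_\lambda(\,\cdot\,)P_{\lambda,0}$, supported on all of $\Omega'$ and nowhere near Schwartz. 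The ``alignment with $P_{\lambda,0}$'' you invoke holds for \emph{every} element of $\Sc_\Omega(\Nc)$ by definition and does not make $\Ec$ of such an atom a scalar multiple of a Bergman kernel. The workable route is different in kind: the continuity of $\Psi$ (for $\vs'$ in the stated range) comes from summation estimates on the kernels $B^{\vs'}$ dual to the sampling theorems, and the surjectivity from discretizing the reproducing formula $f=P_{\vs'}f$ \emph{on $D$} and running a perturbation argument, with the hypothesis $A^{p,q}_{\vs,0}(D)=\widetilde A^{p,q}_{\vs,0}(D)$ used to transfer the conclusion between the two scales of spaces; the second bullet then follows by checking that the images of $\Psi$ exhaust a dense subspace of $\widetilde A^{p,q}_{\vs,0}(D)$. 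As it stands, your proof does not close.
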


\section{Toeplitz Operators}\label{sec:Toeplitz}

In this section, we study Toeplitz operators between the spaces
$A^{p,q}_\vs(D)$. We first provide some necessary conditions for
continuity and compactness of Toeplitz operators
(cf.~Proposition~\ref{prop:11}), and then add some corresponding
sufficient conditions (cf.~Theorem~\ref{prop:14}). As often happens
with this kind of operators, the two conditions only match when the measure
$\mi$ symbol of the given Toeplitz operator  $T_\mi$
 is positive. We conclude this section providing necessary
conditions and sufficient conditions for a Toeplitz operator to belong
to some Schatten class $\Lin^p(A^{2,2}_{\vs}(D); A^{2,2}_{\vs'}(D))$
(cf.~Theorem~\ref{prop:33}). 

\begin{deff}
We denote by $\cM(D)$ the space of Radon measures on $D$, and by
$\cM_+(D)$ the space of positive Radon measures on $D$. Given $\mi\in
\cM(D)$ and $R>0$, we define 
\[
M_R(\mi)\colon D\ni (\zeta,z)\ni \mapsto \abs{\mi}(B((\zeta,z),R))\in \C.
\]
\end{deff}

\begin{deff}
Take $\mi\in \cM(D)$ and $\vect{s'}\in \R^r$. Define
\[
T_{\mi,\vect{s'}} f\coloneqq \int_D B^{\vect{s'}}_{(\zeta,z)} f(\zeta,z)\,\dd \mi(\zeta,z)
\]
for every $\mi$-measurable function $f$ such that $B^{\vect{s'}}_{\,\cdot\,}(\zeta',z')f\in L^1(\mi)$ for every $(\zeta',z')\in D$. 
\end{deff}

Observe that $B^{\vect{s'}}_{\,\cdot\,}(\zeta',z')=\overline{B^{\vect{s'}}_{(\zeta',z')}}$ for every $(\zeta',z')\in D$, so that $T_{\mi,\vect{s'}} f$ is defined if and only if $B^{\vect{s'}}_{(\zeta',z')} f\in L^1(\mi)$ for every $(\zeta',z')\in D$.

\begin{lem}\label{lem:2}
Take $\vect{s'}\in \R^r$ and $\mi\in \cM(D)$. Let $f$ be a
$\mi$-measurable function on $D$, and assume that
$B^{\vect{s'}}_{(\zeta,z)}f\in L^1(\mi)$ for some $(\zeta,z)\in
D$. Then, $T_{\mi,\vect{s'}}f$ is a well-defined holomorphic function
on $D$. 
\end{lem}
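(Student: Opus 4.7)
The plan is in two steps: first, propagate $L^1(\mi)$-integrability of $B^{\vect{s'}}_{(\zeta_0,z_0)} f$ from the hypothesized single point $(\zeta_0,z_0)$ to every point of $D$, with local uniform control; second, exploit holomorphy of the integrand in the free variable to conclude holomorphy of $T_{\mi,\vect{s'}} f$ by dominated convergence.

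For the first step I would establish the pointwise comparison
\[
|B^{\vect{s'}}_{(\zeta',z')}(\zeta,z)| \leq C_K \, |B^{\vect{s'}}_{(\zeta_0,z_0)}(\zeta,z)|, \qquad (\zeta,z)\in D,\ (\zeta',z')\in K,
\]
for every compact $K\subseteq D$ containing $(\zeta_0,z_0)$. Using the explicit formula $B^{\vect{s'}}_{(\zeta',z')}(\zeta,z) = \Delta^{\vect{s'}}_\Omega\bigl((z-\overline{z'})/(2i) - \Phi(\zeta,\zeta')\bigr)$, this reduces to a standard locally uniform estimate on the holomorphic extension of the generalized power function to the tube over $\Omega$: replacing $(\zeta_0,z_0)$ by $(\zeta',z')$ perturbs the argument by a bounded quantity (controlled by $K$), and $\log|\Delta^{\vect{s'}}_\Omega|$ is locally bounded on the tube since $\Delta^{\vect{s'}}_\Omega$ extends holomorphically and is nowhere vanishing there. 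Alternatively, one may use the transitive affine action of $T_+\ltimes \Nc$ on $D$ to transport $(\zeta',z')$ to $(\zeta_0,z_0)$, carrying the kernel covariantly and absorbing the resulting bounded cocycle (depending only on $K$) into $C_K$. Combined with the hypothesis, this yields $B^{\vect{s'}}_{(\zeta',z')} f \in L^1(\mi)$ for every $(\zeta',z')\in D$, with $\|B^{\vect{s'}}_{(\zeta',z')}f\|_{L^1(\mi)}$ bounded on compacta in $(\zeta',z')$; hence $T_{\mi,\vect{s'}}f$ is defined everywhere on $D$.

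For the second step, fix $(\zeta_0',z_0')\in D$ and a compact neighborhood $K$ of it. By the observation preceding the lemma, the integrand
\[
(\zeta,z) \mapsto B^{\vect{s'}}_{(\zeta,z)}(\zeta',z')\, f(\zeta,z) = \overline{B^{\vect{s'}}_{(\zeta',z')}(\zeta,z)}\, f(\zeta,z)
\]
is, for $(\zeta',z')\in K$, pointwise dominated in absolute value by the $\mi$-integrable function $C_K|B^{\vect{s'}}_{(\zeta_0,z_0)}f|$ from Step~1. Since for each fixed $(\zeta,z)\in D$ the function $B^{\vect{s'}}_{(\zeta,z)}(\,\cdot\,)$ is holomorphic on $D$, dominated convergence applied to difference quotients (or equivalently Morera's theorem combined with Fubini) allows one to commute $\bar\partial$ with the integral and conclude that $T_{\mi,\vect{s'}}f$ is holomorphic on $D$.

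The principal technical obstacle is the uniform pointwise comparison in Step~1: the bound must hold uniformly in the unbounded variable $(\zeta,z)\in D$, even though $|B^{\vect{s'}}_{(\zeta_0,z_0)}(\zeta,z)|$ may be arbitrarily large or small in various regions. Once the comparison is secured, both well-definedness on all of $D$ and the holomorphy of $T_{\mi,\vect{s'}}f$ follow by routine dominated-convergence arguments.
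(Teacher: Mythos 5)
Your overall architecture coincides with the paper's: propagate the integrability of $B^{\vect{s'}}_{(\zeta_0,z_0)}f$ from one pole to all of $D$ via a two-sided comparison of the kernels, then get holomorphy by Morera/dominated convergence. Step~2 is fine. The genuine gap is in your justification of the comparison in Step~1 — the very point you flag as ``the principal technical obstacle'' but do not actually overcome. The argument of $\Delta^{\vect{s'}}_\Omega$ in $B^{\vect{s'}}_{(\zeta',z')}(\zeta,z)$ is $\frac{z-\overline{z'}}{2i}-\Phi(\zeta,\zeta')$, so replacing $(\zeta_0,z_0)$ by $(\zeta',z')\in K$ perturbs it by $\frac{\overline{z_0'}-\overline{z'}}{2i}-\Phi(\zeta,\zeta'-\zeta_0')$, and the second term is \emph{not} bounded: it grows with $\norm{\zeta}$ as $(\zeta,z)$ runs through the unbounded domain $D$, so for genuinely type II domains it is not a bounded perturbation at all. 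Even in the tubular case, where the perturbation is bounded, local boundedness of $\log\abs{\Delta^{\vect{s'}}_\Omega}$ on the tube only yields the comparison for $(\zeta,z)$ in compacta, whereas you need it uniformly as the argument approaches the boundary of the tube (where $\Delta^{\vect{s'}}_\Omega$ degenerates) and infinity. The group-action alternative has the same defect: after conjugating by an affine automorphism $g$ carrying $(\zeta',z')$ to $(\zeta_0,z_0)$, the cocycle is harmless, but you must then compare $\abs{B^{\vect{s'}}_{(\zeta_0,z_0)}(g^{-1}(\zeta,z))}$ with $\abs{B^{\vect{s'}}_{(\zeta_0,z_0)}(\zeta,z)}$, and the Bergman distance between $(\zeta,z)$ and $g^{-1}(\zeta,z)$ need not stay bounded as $(\zeta,z)$ varies over $D$, so this is again the estimate you are trying to prove.

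The estimate you want is true, but it is a theorem rather than a routine local observation: the paper invokes \cite[Theorem 2.47]{CalziPeloso}, which provides a constant $C>0$ such that $\frac1C\abs{B^{\vect{s'}}_{(\zeta,z)}(\zeta',z')}\meg\abs{B^{\vect{s'}}_{(\zeta,z)}(\zeta'',z'')}\meg C\abs{B^{\vect{s'}}_{(\zeta,z)}(\zeta',z')}$ \emph{uniformly} in the pole $(\zeta,z)\in D$ whenever $d((\zeta',z'),(\zeta'',z''))\meg1$ in the Bergman metric. Combined with the symmetry $B^{\vect{s'}}_{\,\cdot\,}(\zeta',z')=\overline{B^{\vect{s'}}_{(\zeta',z')}}$ noted just before the lemma and a chaining argument (any point of $D$, and uniformly any point of a compact $K$, is reached from $(\zeta_0,z_0)$ by finitely many steps of length at most $1$), this yields exactly your Step~1 bound. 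If you replace your two heuristic justifications by a citation of, or an actual proof of, that uniform Bergman-ball comparison, the rest of your argument goes through and reproduces the paper's proof.
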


\begin{proof}
Observe that, by~\cite[Theorem 2.47]{CalziPeloso}, there is a constant $C>0$ such that
\[
\frac{1}{C} \abs*{B_{(\zeta,z)}^{\vect{s'}}(\zeta',z')}
\meg \abs*{B_{(\zeta,z)}^{\vect{s'}}(\zeta'',z'')}\meg C\abs*{B_{(\zeta,z)}^{\vect{s'}}(\zeta',z')}
\]
for every $(\zeta,z),(\zeta',z'),(\zeta'',z'')\in D$ such that
$d((\zeta',z'),(\zeta'',z''))\meg 1$. Therefore,
$\abs{B^{\vect{s'}}_{\,\cdot\,}(\zeta',z')  f}\meg
C_3\abs{B^{\vect{s'}}_{(\zeta,z)}  f}$ on $D$, for every
$(\zeta,z),(\zeta',z')\in D$ such that $d((\zeta,z),(\zeta',z'))\meg
1$. Hence, the assumption shows that
$B^{\vect{s'}}_{\,\cdot\,}(\zeta,z)  f\in L^1(\mi)$ for every
$(\zeta,z)\in D$, so that $T_{\mi,\vect{s'}}f$ is well defined. Then,
Morera's theorem readily implies that $T_{\mi, \vect{s'}}f $ is
holomorphic. 
\end{proof}

\begin{prop}\label{prop:11}
Take $p,q\in ]0,\infty]$ and $\vect s,\vect{s'},\vect{s''}\in \R^r$ such that $\vect s\in \frac{1}{2 q}\vect m+(\R_+^*)^r$ (resp.\ $\vect s\in \R_+^r$ if $q=\infty$) and such that  $T_{\mi,\vect{s'}}$ induces a continuous  linear mapping of $A^{p,q}_{\vect s,0}(D)$ (resp.\ $A^{p,q}_{\vect s}(D)$) into $A^{p,q}_{\vect b+\vect d-\vect{s'}-\vect{s''}}(D)$.
Then, the following hold:
\begin{enumerate}
\item[\emph{(i)}] assume that $M_1 (\mi)\in L^{\infty,\infty}_{\vect b+\vect d-\vect{s'}-\vect{s''}}(D)$, and that $B^{\vect{s'}}_{(\zeta,z)}\in A^{p',q'}_{\vect{s''}+(1/p-1)_+(\vect b+\vect d)}(D)$ for some (hence every) $(\zeta,z)\in D$. If we denote by $V$ the closed vector subspace of $A^{p',q'}_{\vect{s''}+(1/p-1)_+(\vect b+\vect d)}(D)$ generated by the $B^{\vect{s'}}_{(\zeta,z)}$, as $(\zeta,z)$ runs through $D$, then
\[
\int_D f \overline g\,\dd \mi= c_{\vect{s'}}\int_D (T_{\mi,\vect{s'}}f )\overline g (\Delta_\Omega^{-\vect{s'}}\circ \rho)\,\dd \nu_D
\]
for every $f\in A^{p,q}_{\vect s,0}(D)$ (resp.\ $f\in A^{p,q}_{\vect s}(D)$), and for every $g\in V$;

\item[\emph{(ii)}] if $\mi$ is positive, then $M_1 (\mi)\in L^{\infty,\infty}_{\vect b+\vect d-\vect{s}-\vect{s''}}(D)$;

\item[\emph{(iii)}] if $\mi$ is positive, $p,q>1$, $\vect s\in  \frac{1}{p}(\vect b+\vect d)+\frac{1}{2 q'}\vect{m'} +(\R_+^*)^r$, $A^{p,q}_{\vect s,0}(D)=\widetilde A^{p,q}_{\vect s,0}(D)$, and  $T_{\mi,\vect{s'}}$ induces a compact linear mapping of $A^{p,q}_{\vect s,0}(D)$ into $A^{p,q}_{\vect b+\vect d-\vect{s'}-\vect{s''}}(D)$, then $M_1 (\mi)\in L^{\infty,\infty}_{\vect b+\vect d-\vect{s}-\vect{s''},0}(D)$.
\end{enumerate}
\end{prop}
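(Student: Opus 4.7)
All three parts rest on testing $T_{\mu,\vect{s'}}$ on the Bergman kernels $B^{\vect{s'}}_{(\zeta_0,z_0)}$ and invoking three standard ingredients: the quasi-constancy $|B^{\vect{s'}}_{(\zeta_0,z_0)}(\zeta,z)|\approx\Delta_\Omega^{\vect{s'}}(\rho(\zeta_0,z_0))$ on $B((\zeta_0,z_0),1)$ (cf.~\cite[Theorem 2.47]{CalziPeloso}, already used in the proof of Lemma~\ref{lem:2}); the conjugate-symmetry identity $B^{\vect{s'}}_{(\zeta,z)}(\zeta_0,z_0)=\overline{B^{\vect{s'}}_{(\zeta_0,z_0)}(\zeta,z)}$; and the standard pointwise evaluation bound $|h(\zeta_0,z_0)|\lesssim\|h\|_{A^{p,q}_{\vect t}}\Delta_\Omega^{-\vect t-(\vect b+\vect d)/p-\kappa_q}(\rho(\zeta_0,z_0))$ on weighted Bergman spaces, together with the analogous size estimate for $\|B^{\vect{s'}}_{(\zeta_0,z_0)}\|_{A^{p,q}_{\vect s}}$.

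For part~(i), I would verify the identity first for $g=B^{\vect{s'}}_{(\zeta_0,z_0)}$ and then extend by linearity and continuity to $g\in V$. With $f$ fixed, Fubini---justified by combining $M_1(\mu)\in L^{\infty,\infty}_{\vect b+\vect d-\vect{s'}-\vect{s''}}(D)$ with the pointwise bounds on $f\in A^{p,q}_{\vect s}$ and on $B^{\vect{s'}}_{(\zeta_0,z_0)}$ in $A^{p',q'}_{\vect{s''}+(1/p-1)_+(\vect b+\vect d)}(D)$---rewrites the right-hand side as
\[
\int_D f(\zeta,z)\,\overline{P_{\vect{s'}}g(\zeta,z)}\,\dd\mu(\zeta,z),
\]
after applying the conjugate-symmetry identity once under the $\nu_D$-integral. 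The reproducing property $P_{\vect{s'}}B^{\vect{s'}}_{(\zeta_0,z_0)}=B^{\vect{s'}}_{(\zeta_0,z_0)}$ (valid once the defining integral converges, which is the content of the kernel-membership hypothesis) yields equality with the left-hand side. Continuity of both sides on $V$ closes the density step.

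For part~(ii), with $\mu\geq 0$ I substitute $f:=B^{\vect{s'}}_{(\zeta_0,z_0)}$ and evaluate $T_{\mu,\vect{s'}}f$ at $(\zeta_0,z_0)$; the conjugate-symmetry identity gives
\[
T_{\mu,\vect{s'}}f(\zeta_0,z_0)=\int_D\bigl|B^{\vect{s'}}_{(\zeta_0,z_0)}(\zeta,z)\bigr|^2\,\dd\mu(\zeta,z)\;\geq\;c\,\Delta_\Omega^{2\vect{s'}}(\rho(\zeta_0,z_0))\,\mu(B((\zeta_0,z_0),1)),
\]
using positivity and quasi-constancy on $B((\zeta_0,z_0),1)$. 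Operator continuity together with the pointwise evaluation bound on the target space $A^{p,q}_{\vect b+\vect d-\vect{s'}-\vect{s''}}(D)$ and the standard formula for $\|B^{\vect{s'}}_{(\zeta_0,z_0)}\|_{A^{p,q}_{\vect s}}$ furnishes a matching upper bound; a direct bookkeeping of the $\Delta_\Omega$-exponents (the $p$- and $q$-corrections cancelling as usual) produces
\[
\Delta_\Omega^{\vect b+\vect d-\vect s-\vect{s''}}(\rho(\zeta_0,z_0))\,\mu(B((\zeta_0,z_0),1))\lesssim\|T_{\mu,\vect{s'}}\|,
\]
uniformly in $(\zeta_0,z_0)$, which is the claimed $L^{\infty,\infty}$ membership.

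For part~(iii), I would work with the normalized kernels $\tilde f_{(\zeta_0,z_0)}:=B^{\vect{s'}}_{(\zeta_0,z_0)}/\|B^{\vect{s'}}_{(\zeta_0,z_0)}\|_{A^{p,q}_{\vect s}}$, a bounded family in $A^{p,q}_{\vect s,0}(D)$. Granted the sub-claim that $\tilde f_{(\zeta_n,z_n)}\to 0$ weakly in $A^{p,q}_{\vect s,0}(D)$ whenever $(\zeta_n,z_n)$ escapes to the boundary of $D$ in the Bergman metric, compactness of $T_{\mu,\vect{s'}}$ forces $\|T_{\mu,\vect{s'}}\tilde f_{(\zeta_n,z_n)}\|\to 0$, and the very same computation as in~(ii), now with this vanishing in place of the operator norm, upgrades the uniform bound to vanishing at the boundary, i.e., to $M_1(\mu)\in L^{\infty,\infty}_{\vect b+\vect d-\vect s-\vect{s''},0}(D)$. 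The main obstacle is the sub-claim: using the identification $A^{p,q}_{\vect s,0}=\widetilde A^{p,q}_{\vect s,0}$ and the $B$-space duality (Proposition~\ref{prop:23bis} together with the sesquilinear pairing preceding it) I would concretely realize the topological dual, and, since $p,q>1$, reduce weak testing to the dense subspace $\Ec(\Sc_{\Omega,L}(\Nc))$; on such test elements the pairing is controlled by values of $\tilde f_{(\zeta_n,z_n)}$ on compact subsets of $D$, which tend to $0$ by pointwise kernel decay as $(\zeta_n,z_n)\to\partial D$.
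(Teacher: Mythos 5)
Your overall strategy --- testing $T_{\mi,\vect{s'}}$ on the kernels $B^{\vect{s'}}_{(\zeta,z)}$, using their quasi-constancy on unit balls and the reproducing property, and upgrading the bound of (ii) to a vanishing statement via weak-null convergence of normalized kernels in (iii) --- is exactly the paper's. Part (i) is essentially the paper's argument (the paper phrases the right-hand side as $P_{\vect{s'}}(T_{\mi,\vect{s'}}f)(\zeta,z)$ rather than moving $P_{\vect{s'}}$ onto $g$ by Fubini, but this is the same computation), and it correctly isolates the continuity of the sesquilinear form as the density step. However, there are two genuine gaps in (ii) and (iii). First, nothing in the hypotheses guarantees that $B^{\vect{s'}}_{(\zeta,z)}$ belongs to $A^{p,q}_{\vect s}(D)$ at all: for the wrong $\vect{s'}$ the ``standard formula'' for $\norm{B^{\vect{s'}}_{(\zeta,z)}}_{A^{p,q}_{\vect s}(D)}$ gives $+\infty$ and the whole testing argument is vacuous. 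The paper resolves this with Lemma~\ref{lem:8}: one may replace $\vect{s'}$ by $\vect{s'}-\vect{s'''}$ with $\vect{s'''}\in\N_{\Omega'}$ (which shifts the target space but leaves the conclusion $M_1(\mi)\in L^{\infty,\infty}_{\vb+\vd-\vect s-\vect{s''}}(D)$ unchanged, since that index does not involve $\vect{s'}$), and this reduction itself requires a nontrivial commutation of $T_{\mi,\cdot}$ with differentiation. You need to either prove such a reduction or add the membership of the kernels as a hypothesis.

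Second, your justification of the key sub-claim in (iii) does not work as stated: the duality pairing of $\tilde f_{(\zeta_n,z_n)}$ against an element of $\Ec(\Sc_{\Omega,L}(\Nc))$ is \emph{not} controlled by the values of $\tilde f_{(\zeta_n,z_n)}$ on a compact subset of $D$ --- these test elements are not compactly supported in $D$, so locally uniform decay of the kernels proves nothing about the pairing. The correct mechanism (the one in the paper) is the reverse: under the identification $A^{p,q}_{\vect s,0}(D)'\cong A^{p',q'}_{\vb+\vd-\vect s-\vect{s'}}(D)$ furnished by the pairing $\langle f\vert g\rangle_{\vect{s'}}=\int_D f\overline g\,(\Delta_\Omega^{-\vect{s'}}\circ\rho)\,\dd\nu_D$ (this is where $p,q>1$, $\vect s\in\frac1p(\vb+\vd)+\frac{1}{2q'}\vect{m'}+(\R_+^*)^r$ and $A^{p,q}_{\vect s,0}=\widetilde A^{p,q}_{\vect s,0}$ enter), the pairing of $B^{\vect{s'}}_{(\zeta,z)}$ against $f$ is a constant multiple of $\overline{f(\zeta,z)}$ by the reproducing property; the vanishing then comes from the decay of the \emph{dual} element $f$, via the embedding $A^{p',q'}_{\vb+\vd-\vect s-\vect{s'}}(D)\subseteq A^{\infty,\infty}_{\cdot,0}(D)$ valid for $p,q>1$. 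As written, your argument for the weak-null convergence would fail, so (iii) is not established.
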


This extends one implication of~\cite[Lemma 4.1]{NanaSehba}, where the case in which $\vect s=\vect{s''}\in \R \vect 1_r$, $\vect{s'}=\vect d-2 \vect s$, $p=q=2$, and $D$ is an irreducible symmetric tube domain, is considered.

We observe that, if $p,q>1$ and $A^{p',q'}_{\vect{s''}}(D)=\widetilde A^{p',q'}_{\vect{s''}}(D)$, then $V$ is simply $A^{p',q'}_{\vect{s''}}(D)$, thanks to~\cite[Corollary 5.14]{CalziPeloso}. We do not know if $V=A^{p',q'}_{\vect{s''}}(D)$ under the sole assumption that $p,q>1$.

Before we pass to the proof, we need a  lemma.

\begin{lem}\label{lem:8}
Take  $p,q\in ]0,\infty]$ and $\vect s,\vect{s'},\vect{s''}\in \R^r$ such that  $T_{\mi,\vect{s'}}$ induces a continuous  linear mapping of $A^{p,q}_{\vect s,0}(D)$ (resp.\ $A^{p,q}_{\vect s}(D)$) into $A^{p,q}_{\vect b+\vect d-\vect{s'}-\vect{s''}}(D)$. In addition, take $\vect{s'''}\in \N_{\Omega'}$. Then, $T_{\mi, \vect{s'}-\vect{s'''}}$ induces a continuous  linear mapping of $A^{p,q}_{\vect s,0}(D)$ (resp.\ $A^{p,q}_{\vect s}(D)$) into $A^{p,q}_{\vect b+\vect d-\vect{s'}-\vect{s''}+\vect{s'''}}(D)$, and
\[
(T_{\mi,\vect{s'}} f)*I^{-\vect{s'''}}_\Omega= \left( \vect{s'}+\frac 1 2 \vect {m'}  \right)_{\vect{s'''}} T_{\mi,\vect{s'}-\vect{s'''}} f
\]
for every $f\in A^{p,q}_{\vect s,0}(D)$ (resp.\ $f\in A^{p,q}_{\vect s}(D)$).
\end{lem}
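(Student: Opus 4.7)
The plan is to reduce the identity to a pointwise convolution identity for the sesqui-holomorphic kernel, namely
\[
B^{\vect{s'}}_{(\zeta,z)}*I^{-\vect{s'''}}_\Omega = \Big(\vect{s'}+\tfrac{1}{2}\vect{m'}\Big)_{\vect{s'''}}\, B^{\vect{s'}-\vect{s'''}}_{(\zeta,z)},
\]
where the convolution is taken in the vertical ($\Ima z$) direction. For $\vect{s'''}\in \N_{\Omega'}$, the distribution $I^{-\vect{s'''}}_\Omega$ is a finite-order differential operator, so this can be checked either by direct computation with the explicit form of $\Delta_\Omega^{\vect{s'}}\circ\rho$, or on the Laplace-transform side using the identity $\Gamma_\Omega(\vect s)\,\Delta^{-\vect s}_{\Omega'}$ for the Laplace transform of $\Delta_\Omega^{\vect s}\cdot \nu_\Omega$ recalled in Section~\ref{sec:2}; the Pochhammer factor emerges from the quotient $\Gamma_\Omega(\vect{s'}+\frac12\vect{m'}+\vect{s'''})/\Gamma_\Omega(\vect{s'}+\frac12\vect{m'})$.

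Granted this kernel identity, I would compute
\begin{align*}
(T_{\mi,\vect{s'}}f)*I^{-\vect{s'''}}_\Omega
&= \int_D \Big(B^{\vect{s'}}_{(\zeta,z)}*I^{-\vect{s'''}}_\Omega\Big)\, f(\zeta,z)\,\dd \mi(\zeta,z) \\
&= \Big(\vect{s'}+\tfrac12\vect{m'}\Big)_{\vect{s'''}}\int_D B^{\vect{s'}-\vect{s'''}}_{(\zeta,z)}\, f(\zeta,z)\,\dd \mi(\zeta,z) \\
&= \Big(\vect{s'}+\tfrac12\vect{m'}\Big)_{\vect{s'''}}\, T_{\mi,\vect{s'}-\vect{s'''}}f.
\end{align*}
The interchange of the differential operator $*I^{-\vect{s'''}}_\Omega$ with the integration against $\mi$ is justified by differentiation under the integral sign: the argument in the proof of Lemma~\ref{lem:2} (based on~\cite[Theorem 2.47]{CalziPeloso}) gives locally uniform domination of $B^{\vect{s'}}_{\,\cdot\,}(\zeta',z')f$ in $L^1(\mi)$, and the same estimate applied to a sufficiently large neighborhood of $(\zeta',z')$ dominates all vertical derivatives up to the order of $\vect{s'''}$, since differentiating $B^{\vect{s'}}$ in the $\Ima z$ variable only shifts the index.

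Having established the identity, the continuity of $T_{\mi,\vect{s'}-\vect{s'''}}$ from $A^{p,q}_{\vect s,0}(D)$ (resp.\ $A^{p,q}_{\vect s}(D)$) into $A^{p,q}_{\vect b+\vect d-\vect{s'}-\vect{s''}+\vect{s'''}}(D)$ follows by writing it as the composition
\[
\frac{1}{(\vect{s'}+\frac12\vect{m'})_{\vect{s'''}}}\,\big(*I^{-\vect{s'''}}_\Omega\big)\circ T_{\mi,\vect{s'}},
\]
and invoking the continuity of the Riemann--Liouville operator $*I^{-\vect{s'''}}_\Omega$ from $A^{p,q}_{\vect b+\vect d-\vect{s'}-\vect{s''}}(D)$ into $A^{p,q}_{\vect b+\vect d-\vect{s'}-\vect{s''}+\vect{s'''}}(D)$, which is a standard consequence of the identification of Riemann--Liouville operators with weight shifts on the Bergman scale (cf.\ Section~\ref{sec:fnct-sp} and~\cite{CalziPeloso}).

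The main obstacle I anticipate is making rigorous sense of $F*I^{-\vect{s'''}}_\Omega$ for a holomorphic $F$ on $D$: one must verify that convolving in the $\Ima z$ direction preserves holomorphy and is well-defined on the relevant Bergman spaces, and one must pin down the exact normalization so that the Pochhammer coefficient $(\vect{s'}+\tfrac12\vect{m'})_{\vect{s'''}}$ comes out with the correct value; once this bookkeeping is in place, the rest is an application of Fubini (for absolutely convergent integrals) and the proven continuity of the weight-shift.
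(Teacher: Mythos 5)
Your proposal is correct and follows essentially the same route as the paper: both arguments rest on (a) the kernel identity $B^{\vect{s'}}_{(\zeta,z)}*I^{-\vect{s'''}}_\Omega=\big(\vect{s'}+\frac12\vect{m'}\big)_{\vect{s'''}}B^{\vect{s'}-\vect{s'''}}_{(\zeta,z)}$ (which the paper simply cites from~\cite[Proposition 2.29]{CalziPeloso} rather than re-deriving via the Laplace transform), (b) passing the constant-coefficient differential operator $\,\cdot\,*I^{-\vect{s'''}}_\Omega$ under the integral against $\mi$, justified exactly as you indicate by Cauchy-type estimates combined with the local comparability of $\abs{B^{\vect{s'}}_{(\zeta',z')}}$ on Bergman balls from~\cite[Theorem 2.47]{CalziPeloso} (the paper phrases this as an induction on the order of differentiation), and (c) the continuity of the weight shift $\,\cdot\,*I^{-\vect{s'''}}_\Omega$ on the Bergman scale (\cite[Corollary 3.27]{CalziPeloso}). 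The only loose end is the exact Pochhammer normalization, which you flag yourself and which is precisely the bookkeeping the paper outsources to the cited proposition.
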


\begin{proof}
Take $f\in A^{p,q}_{\vect s,0}(D)$ (resp.\ $f\in A^{p,q}_{\vect s}(D)$). By~\cite[Proposition 2.29 and Corollary 3.27]{CalziPeloso}, it will suffice to prove that, for every $k\in \N$,
\[
D^k (T_{\mi,\vect{s'}} f)(\zeta,z)= \int_D f(\zeta',z') D^k B_{(\zeta',z')}(\zeta,z)\,\dd \mi(\zeta',z') 
\]
for every $(\zeta,z)\in D$, where $D^k$ denotes the differential of order $k$. Observe that, by Cauchy's estimates and~\cite[Theorem 2.47]{CalziPeloso}, there are two constants $C_{k,(\zeta,z)},C'_{k,(\zeta,z)}>0$ such that
\[
\abs{D^j B_{(\zeta',z')}(\zeta'',z'')}\meg C_{k,(\zeta,z)}\max_{\overline B((\zeta'',z''),1)} \abs{ B_{(\zeta',z')}}\meg C'_{k,(\zeta,z)}\abs{B_{(\zeta',z')}(\zeta,z)}
\]
for every $(\zeta',z')\in D$, for every $(\zeta'',z'')\in B((\zeta,z),1)$, and for every $j=0,\dots, k$. Therefore, the assertion follows by induction on $k$ and the theorems of differentiation under the integral sign.
\end{proof}

\begin{proof}[Proof of Proposition~\ref{prop:11}.]
(i) Observe first that, if $f\in A^{p,q}_{\vect s}(D)$ and $g\in A^{p',q'}_{\vect{s''}+(1/p-1)_+(\vect b+\vect d)}(D)$, then $f g\in A^{1,1}_{\vect s+\vect{s''}}(D)\subseteq L^1(\mi)$ (cf.~\cite[Proposition 3.2]{CalziPeloso} and~\cite[Theorem 5.4]{CPCarleson}), so that $f\overline g\in L^1(\mi)$. Therefore, the sesquilinear form
\[
(f,g)\mapsto \int_D f \overline g\,\dd \mi- c_{\vect{s'}}\int_D T_{\mi,\vect{s'}}f \overline g (\Delta_\Omega^{-\vect{s'}}\circ \rho)\,\dd \nu_D
\]
is continuous on   $A^{p,q}_{\vect s}(D)\times A^{p',q'}_{\vect{s''}+(1/p-1)_+(\vect b+\vect d)}(D)$.
Then, observe that
\[
\int_D f \overline{B^{\vect{s'}}_{(\zeta,z)}}\,\dd \mi= T_{\mi,\vect{s'}} f= c_{\vect{s'}}\int_D T_{\mi,\vect{s'}}f \overline {B^{\vect{s'}}_{(\zeta,z)}} (\Delta_\Omega^{-\vect{s'}}\circ \rho)\,\dd \nu_D
\]
for every $f\in A^{p,q}_{\vect s,0}(D)$ (resp.\ $f\in A^{p,q}_{\vect s}(D)$) and for every $(\zeta,z)\in D$, thanks to~\cite[Propositions 2.41 and 3.13]{CalziPeloso}. Therefore, the assertion follows by continuity.

(ii) By Lemma~\ref{lem:8} and~\cite[Proposition 2.41]{CalziPeloso}, we may assume that $B^{\vect{s'}}_{(\zeta,z)}\in A^{p,q}_{\vect s,0}(D)$ (resp.\ $B^{\vect{s'}}_{(\zeta,z)}\in A^{p,q}_{\vect s}(D)$) for every $(\zeta,z)\in D$. Then,
\[
(T_{\mi,\vect{s'}} B_{(\zeta,z)}^{\vect{s'}})(\zeta,z)=\norm{B^{2\vect{s'}}_{(\zeta,z)}}_{L^1(\mi)}
\]
for every $(\zeta,z)\in D$. In addition, observe that $A^{p,q}_{\vect b+\vect d-\vect{s'}-\vect{s''}}(D)\subseteq A^{\infty,\infty}_{(1-1/p)(\vect b+\vect d)-\vect{s'}-\vect{s''}}(D)$ by~\cite[Proposition 3.2]{CalziPeloso}, so that by means of~\cite[Proposition 2.41]{CalziPeloso} we see that there is a constant $C_1>0$ such that
\[
(T_{\mi,\vect{s'}} B_{(\zeta,z)}^{\vect{s'}})(\zeta,z)\meg C_1 \Delta_\Omega^{\vect s+2\vect{s'}+\vect{s''}-(\vect b+\vect d)}(\rho(\zeta,z))
\] 
for every $(\zeta,z)\in D$. Furthermore, by means of~\cite[Theorem 2.47]{CalziPeloso}, we see that there is a constant $C_2>0$ such that
\[
\norm{B^{2\vect{s'}}_{(\zeta,z)}}_{L^1(\mi)}\Meg C_2 B^{2 \vect{s'}}_{(\zeta,z)}(\zeta,z)M_1(\mi)(\zeta,z) = C_2  \Delta_\Omega^{2 \vect{s'}}(\rho(\zeta,z))M_1(\mi)(\zeta,z)
\]
for every $(\zeta,z)\in D$. It then follows that
\[
\Delta_\Omega^{\vect b+\vect d-\vect s-\vect{s''}}(\rho(\zeta,z))M_1(\mi)\meg \frac{C_1}{C_2} 
\]
for every $(\zeta,z)\in D$.

(iii) Arguing as in the proof of (ii), we may assume that $B^{\vect{s'}}_{(\zeta,z)}\in A^{p,q}_{\vect s,0}(D)$ for every $(\zeta,z)\in D$, and that $A^{p',q'}_{\vect b+\vect d-\vect s-\vect{s'}}(D)=\widetilde A^{p',q'}_{\vect b+\vect d-\vect s-\vect{s'}}(D)$ (cf.~\cite[Corollary 5.11]{CalziPeloso}).
Let us prove that   
\[
b_{(\zeta,z)}^{\vect{s'}}\coloneqq \Delta_\Omega^{(\vect b+\vect d)/p-\vect s-\vect{s'}}(\rho(\zeta,z)) B^{\vect{s'}}_{(\zeta,z)}\to 0,
\]
as $(\zeta,z)\to \infty$, in the weak topology $\sigma(A^{p,q}_{\vect s,0}(D), A^{p,q}_{\vect s,0}(D)')$.
Observe first that, by~\cite[Proposition 5.12]{CalziPeloso}, we may identify $A^{p,q}_{\vect s,0}(D)'$ with $A^{p',q'}_{\vect b+\vect d-\vect s-\vect{s'}}(D)$ by means of the sesquilinear form
\[
\langle\,\cdot\,\vert \,\cdot\,\rangle_{\vect{s'}}\colon A^{p,q}_{\vect s,0}(D)\times A^{p',q'}_{\vect b+\vect d-\vect s-\vect{s'}}(D)\ni (f,g)\mapsto \int_D f\overline g (\Delta_\Omega^{-\vect{s'}}\circ \rho)\,\dd \nu_D. 
\]
Now,
\[
\langle B^{\vect{s'}}_{(\zeta,z)}\Big\vert f\rangle_{\vect{s'}}= \frac{1}{c_{\vect{s'}}} \overline{(P_{\vect{s'}}f)(\zeta,z)}= \overline{f(\zeta,z)}
\]
for every $f\in A^{p',q'}_{\vect b+\vect d-\vect s-\vect{s'}}(D)$. In addition, $A^{p',q'}_{\vect b+\vect d-\vect s-\vect{s'}}(D)\subseteq A^{\infty,\infty}_{(1-1/p)(\vect b+\vect d)-\vect s-\vect{s'},0}(D)$ by~\cite[Proposition 3.7]{CalziPeloso}, since $p,q>1$. Therefore,
\[
\lim_{(\zeta,z)\to \infty}\langle b^{\vect{s'}}_{(\zeta,z)}\Big\vert f\rangle_{\vect{s'}}=0
\]
for every $f\in A^{p',q'}_{\vect b+\vect d-\vect s-\vect{s'}}(D)$, whence our assertion. 
Since $T_{\mi,\vect{s'}}$ is compact, this implies that 
\[
\lim_{(\zeta,z)\to \infty} \norm{T_{\mi,\vect{s'}}b^{\vect{s'}}_{(\zeta,z)}}_{A^{p,q}_{\vect b+\vect d-\vect {s'}-\vect{s''}}(D)}=0,
\]
so that the estimates of (ii) show that $M_1(\mi)\in L^{\infty,\infty}_{\vect b+\vect d-\vect{s'}-\vect{s''},0}(D)$.
\end{proof}

\begin{teo}\label{prop:14}
Take $p,q\in [1,\infty]$ and $\vect s,\vect{s'},\vect{s''}\in \R^r$ such that $\vect s\in \frac{1}{2 q} \vect m+(\R_+^*)^r$ if $q<\infty$ and $\vect s\in \R_+^r  $ if $q=\infty$, and such that $\vect{s'}\in \vect b+\vect d-\frac 1 2 \vect m-(\R_+^*)^r$.
Assume that $P_{\vect{s'}}$ induces a continuous linear mapping of $L^{p',q'}_{\vect{s''},0}(D)$ into $A^{p',q'}_{\vect{s''}}(D)$, and that $M_1(\mi)\in L^{\infty,\infty}_{\vect b+\vect d-\vect{s}-\vect{s''}}(D)$ (resp.\ $M_1(\mi)\in L^{\infty,\infty}_{\vect b+\vect d-\vect{s}-\vect{s''},0}(D)$).

Then, $T_{\mi,\vect{s'}}$ induces a continuous (resp.\ compact) linear mapping of $A^{p,q}_{\vect s}(D)$  into $A^{p,q}_{\vect b+\vect d-\vect{s'}-\vect{s''}}(D)$, and
\[
c_{\vect{s'}}\int_D (T_{\mi,\vect{s'}} f) \overline g (\Delta_\Omega^{-\vect{s'}}\circ \rho)\,\dd \nu_D= \int_D f \overline{P_{\vect{s'}}g}\,\dd \mi
\]
for every $f\in A^{p,q}_{\vect s}(D)$ and for every $g\in L^{p',q'}_{\vect{s''}}(D)$.
\end{teo}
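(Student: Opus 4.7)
The plan is to derive the duality formula from Fubini's theorem; this will simultaneously yield the continuity of $T_{\mi,\vect{s'}}$ with the stated bound, and compactness will then follow by a truncation of $\mi$. The central analytic input is a sub-mean-value estimate on Bergman balls. Fix $f\in A^{p,q}_{\vect s}(D)$ and $g\in L^{p',q'}_{\vect{s''},0}(D)$; then $P_{\vect{s'}}g\in A^{p',q'}_{\vect{s''}}(D)$ by hypothesis. Since both $f$ and $P_{\vect{s'}}g$ are holomorphic, $\abs{f\cdot P_{\vect{s'}}g}$ is plurisubharmonic on $D$; combined with the biholomorphism invariance of $\nu_D$ and of the Bergman metric (which makes the $\nu_D$-volume $V$ of unit Bergman balls constant), we obtain the pointwise bound
\[
\abs{f\cdot P_{\vect{s'}}g}(\zeta,z)\meg \frac{1}{V}\int_{B((\zeta,z),1)}\abs{f\cdot P_{\vect{s'}}g}\,\dd \nu_D.
\]
Integrating against $\abs{\mi}$ and exchanging the order of integration (using that $(\zeta,z)\in B((\zeta',z'),1)\iff (\zeta',z')\in B((\zeta,z),1)$) gives
\[
\int_D \abs{f\cdot P_{\vect{s'}}g}\,\dd\abs{\mi}\meg \frac{1}{V}\int_D\abs{f\cdot P_{\vect{s'}}g}\,M_1(\mi)\,\dd\nu_D.
\]
The hypothesis on $M_1(\mi)$ translates into the pointwise majorisation $M_1(\mi)\meg C(\Delta_\Omega^{\vect s+\vect{s''}-\vect b-\vect d}\circ\rho)$. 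Using the identity $\int_D F\,(\Delta_\Omega^{\vect t-\vect b-\vect d}\circ\rho)\,\dd\nu_D=\norm{F}_{L^{1,1}_{\vect t}}$ for nonnegative $F$ (a direct computation from the definitions of $\nu_D$ and of the mixed-norm space), followed by Hölder's inequality for mixed-norm Lebesgue spaces and the assumed continuity of $P_{\vect{s'}}$, we conclude
\[
\int_D \abs{f\cdot P_{\vect{s'}}g}\,\dd\abs{\mi}\meg C'\norm{f}_{A^{p,q}_{\vect s}}\norm{g}_{L^{p',q'}_{\vect{s''}}}.
\]

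The absolute integrability just established legitimises a Fubini computation of $\int_D f\,\overline{P_{\vect{s'}}g}\,\dd\mi$: expanding $P_{\vect{s'}}g$ as an integral, interchanging the two integrations, and invoking $\overline{B^{\vect{s'}}_{(\zeta',z')}(\zeta,z)}=B^{\vect{s'}}_{(\zeta,z)}(\zeta',z')$, one recognises $c_{\vect{s'}}\int_D (T_{\mi,\vect{s'}}f)\overline g(\Delta_\Omega^{-\vect{s'}}\circ\rho)\,\dd\nu_D$, which proves the duality formula (first for $g$ in $L^{p',q'}_{\vect{s''},0}$, extended to all $g\in L^{p',q'}_{\vect{s''}}$ by density when $L^{p',q'}_{\vect{s''},0}$ is strictly smaller). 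Combining the formula with the estimate above shows that the antilinear functional $g\mapsto \langle T_{\mi,\vect{s'}}f,g\rangle_{\vect{s'}}$ is continuous on $L^{p',q'}_{\vect{s''},0}$ with norm at most $C''\norm{f}_{A^{p,q}_{\vect s}}$. Since $T_{\mi,\vect{s'}}f\in\Hol(D)$ by Lemma~\ref{lem:2}, and the pairing $\langle\cdot,\cdot\rangle_{\vect{s'}}$ realises (via Hölder) the duality between $L^{p,q}_{\vect b+\vect d-\vect{s'}-\vect{s''}}$ and $L^{p',q'}_{\vect{s''},0}$, we conclude that $T_{\mi,\vect{s'}}f\in A^{p,q}_{\vect b+\vect d-\vect{s'}-\vect{s''}}(D)$ with the desired norm bound.

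For compactness, assume $M_1(\mi)\in L^{\infty,\infty}_{\vect b+\vect d-\vect s-\vect{s''},0}$, choose an exhaustion $(K_k)$ of $D$ by relatively compact subsets, and set $\mi_k\coloneqq\mi|_{K_k}$. Applying the continuity estimate to $\mi-\mi_k$,
\[
\norm{T_{\mi,\vect{s'}}-T_{\mi_k,\vect{s'}}}\meg C\norm{M_1(\mi-\mi_k)}_{L^{\infty,\infty}_{\vect b+\vect d-\vect s-\vect{s''}}}\to 0\quad\text{as }k\to\infty,
\]
by the vanishing-at-infinity of $M_1(\mi)$. Each $T_{\mi_k,\vect{s'}}$ is compact: a bounded sequence $(f_n)\subset A^{p,q}_{\vect s}$ admits, via the continuous inclusion $A^{p,q}_{\vect s}\hookrightarrow\Hol(D)$ and Montel's theorem, a subsequence $(f_{n_j})$ converging locally uniformly (in particular uniformly on $K_k$) to some $f\in A^{p,q}_{\vect s}$, the limit belonging to the Bergman space by lower semicontinuity of the norm under locally uniform convergence. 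Applying the duality formula with $\mi_k$, using that $\mi_k$ is concentrated on $K_k$ and the continuous embedding $A^{p',q'}_{\vect{s''}}\hookrightarrow\Hol(D)$ to estimate $\sup_{K_k}\abs{P_{\vect{s'}}g}\meg C_k'\norm{g}_{L^{p',q'}_{\vect{s''}}}$, we obtain
\[
\abs{\langle T_{\mi_k,\vect{s'}}(f_{n_j}-f),g\rangle_{\vect{s'}}}\meg C_k\,\sup_{K_k}\abs{f_{n_j}-f}\cdot\norm{g}_{L^{p',q'}_{\vect{s''}}},
\]
so $T_{\mi_k,\vect{s'}}(f_{n_j})\to T_{\mi_k,\vect{s'}}(f)$ in the norm of $A^{p,q}_{\vect b+\vect d-\vect{s'}-\vect{s''}}$, establishing the compactness of $T_{\mi_k,\vect{s'}}$. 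The compactness of $T_{\mi,\vect{s'}}$ follows since it is a norm limit of compact operators.

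The principal obstacle is the justification of Fubini in the first step, since $\mi$ may have infinite mass and the product $f\cdot P_{\vect{s'}}g$ need not decay in any easy way; the sub-mean-value estimate is the device that converts integration against $\mi$ into integration against $M_1(\mi)\,\dd\nu_D$, after which the hypothesis $M_1(\mi)\in L^{\infty,\infty}_{\vect b+\vect d-\vect s-\vect{s''}}$ reduces everything to a Hölder bound in the mixed-norm scale. The only remaining care point is the endpoint case $\min(p',q')=\infty$, where $L^{p',q'}_{\vect{s''},0}$ is strictly smaller than $L^{p',q'}_{\vect{s''}}$ and the duality formula must be extended by density.
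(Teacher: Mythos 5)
Your argument has the same skeleton as the paper's: bound $\norm{f\,P_{\vect{s'}}g}_{L^1(\abs{\mi})}$ by a Carleson-type embedding of $A^{1,1}_{\vect s+\vect{s''}}(D)$ into $L^1(\abs{\mi})$, apply H\"older and the assumed boundedness of $P_{\vect{s'}}$, conclude continuity by duality, and obtain compactness by truncating $\mi$ to compact sets and using Montel's theorem. The only real divergence is that you re-derive the Carleson embedding from the sub-mean-value inequality on Bergman balls, whereas the paper cites it directly (\cite[Theorem 5.4]{CPCarleson}); your derivation is the standard proof of that result and is essentially fine, except that the sub-mean-value property over Bergman balls with respect to $\nu_D$ holds with a uniform constant, not with the exact constant $1/V$ you write (the ball need not be ``balanced'' around its centre in each complex direction).

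There is, however, a genuine gap in your justification of the Fubini step. The interchange you need concerns the double integral
\[
\int_D\int_D B^{\vect{s'}}_{(\zeta,z)}(\zeta',z')\,f(\zeta',z')\,\overline{g(\zeta,z)}\,\Delta_\Omega^{-\vect{s'}}(\rho(\zeta,z))\,\dd\nu_D(\zeta,z)\,\dd\mi(\zeta',z'),
\]
and Tonelli requires finiteness of the corresponding integral of $\abs{B^{\vect{s'}}_{(\zeta,z)}(\zeta',z')}\,\abs{f(\zeta',z')}\,\abs{g(\zeta,z)}\,\Delta_\Omega^{-\vect{s'}}(\rho(\zeta,z))$, i.e.\ control of $\int_D\abs{f}\,\bigl(P^{+}_{\vect{s'}}\abs{g}\bigr)\,\dd\abs{\mi}$, where $P^{+}_{\vect{s'}}$ carries the modulus of the Bergman kernel. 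The quantity you establish, $\int_D\abs{f\,P_{\vect{s'}}g}\,\dd\abs{\mi}<\infty$, involves $\abs{P_{\vect{s'}}g}$ and does not dominate this; nor does your sub-mean-value device apply to $\abs{f}\cdot P^{+}_{\vect{s'}}\abs{g}$, which is not the modulus of a holomorphic function. The paper sidesteps the issue by performing the interchange only for $g\in C_c(D)$, where absolute convergence follows from Lemma~\ref{lem:2} (local boundedness of $(\zeta,z)\mapsto\int_D\abs{B^{\vect{s'}}_{\,\cdot\,}(\zeta,z)f}\,\dd\abs{\mi}$) together with the compactness of $\Supp{g}$, and then concludes by duality. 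Relatedly, your extension of the identity from $L^{p',q'}_{\vect{s''},0}(D)$ to $L^{p',q'}_{\vect{s''}}(D)$ ``by density'' cannot work as stated: when the two spaces differ, the former is a closed proper subspace of the latter. Both defects are repairable along the paper's lines, but as written the Fubini step is not justified.
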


This extends one implication of~\cite[Lemma 4.1]{NanaSehba}, where the case in which $\vect s=\vect{s''}\in \R \vect 1_r$, $\vect{s'}=\vect d-2 \vect s$, $p=q=2$, and $D$ is an irreducible symmetric tube domain, is considered.

\begin{proof}
Observe first that~\cite[Theorem 5.4]{CPCarleson} shows that  there is a constant $C_1>0$ such that
\[
\norm{f}_{L^1(\mi)}\meg C_1 \norm{f}_{A^{1,1}_{\vect s+\vect{s''}}(D)}
\]
for every $f\in A^{1,1}_{\vect s+\vect{s''}}(D)$. Let us first prove that, if $f\in A^{p,q}_{\vect s}(D)$, then $T_{\mi, \vect{s'}} f$ is a well defined element of $\Hol(D)$.
Indeed, since $B^{\vect{s'}}_{(\zeta,z)}\in A^{p',q'}_{\vect{s''}}(D)$ by~\cite[Propositions 2.41 and 5.20]{CalziPeloso}, it is clear that $f B^{\vect{s'}}_{(\zeta,z)}\in A^{1,1}_{\vect s+\vect{s''}}\subseteq L^1(\mi)$, so that $f B^{\vect{s'}}_{\,\cdot\,}(\zeta,z)\in L^1(\mi)$ for every $(\zeta,z)\in D$. The assertion the follows by Lemma~\ref{lem:2}. 

Next, take $C_2>0$ so that $\norm{P_{\vect{s'}}g}_{A^{p',q'}_{\vect{s''}}(D)}\meg C_2 \norm{g}_{L^{p',q'}_{\vect{s''}}(D)}$ for every $g\in L^{p',q'}_{\vect{s''},0}(D)$.
Then, take $g\in C_c(D)$ and observe that, by Fubini's theorem,
\[
\begin{split}
&\abs*{\int_D (T_{\mi,\vect{s'}} f)(\zeta,z) \overline {g(\zeta,z)} \Delta_\Omega^{-\vect{s'}}(\rho(\zeta,z))\,\dd \nu_D(\zeta,z)}\\
&\qquad=\abs*{\int_D \int_D B^{\vect{s'}}_{(\zeta',z')}(\zeta,z) f(\zeta',z')\,\dd \mi(\zeta',z') \overline {g(\zeta,z)} \Delta_\Omega^{-\vect{s'}}(\rho(\zeta,z))\,\dd \nu_D(\zeta,z)}\\
&\qquad=\abs*{\int_D f(\zeta',z')\overline{\int_D B^{\vect{s'}}_{(\zeta,z)}(\zeta',z') g(\zeta,z) \Delta_\Omega^{-\vect{s'}}(\rho(\zeta,z))\,\dd \nu_D(\zeta,z)} \,\dd \mi(\zeta',z')  }\\
&\qquad=\frac{1}{c_{\vect{s'}}}\abs*{\int_D f(\zeta',z')\overline{P_{\vect{s'}}g(\zeta',z')} \,\dd \mi(\zeta',z') }\\
&\qquad\meg \frac{1}{c_{\vect{s'}}} \norm{f P_{\vect{s'}}g}_{L^1(\mi)}\\
&\qquad\meg \frac{C_1}{c_{\vect{s'}}}\norm{f P_{\vect{s'}}g}_{A^{1,1}_{\vect s+\vect{s''} }(D)}\\
&\qquad\meg \frac{C_1}{c_{\vect{s'}}}  \norm{f}_{A^{p,q}_{\vect s}(D)} \norm{P_{\vect{s'}}g}_{A^{p',q'}_{\vect{s''}}(D)},\\
&\qquad\meg \frac{C_1 C_2}{c_{\vect{s'}}}\norm{f}_{A^{p,q}_{\vect s}(D)} \norm{g}_{L^{p',q'}_{\vect{s''}}(D)}.
\end{split}
\]
Therefore, $T_{\mi,\vect{s'}} $ induces a continuous linear mapping of $A^{p,q}_{\vect s}(D)$ into $A^{p,q}_{\vect b+\vect d-\vect{s'}-\vect{s''}}(D)$.

Now, assume that $M_1(\mi)\in L^{\infty,\infty}_{\vect b+\vect d-\vect{s'}-\vect{s''},0}(D)$. Observe that, in order to prove that  $T_{\mi,\vect{s'}} $ induces a compact linear mapping of $A^{p,q}_{\vect s}(D)$ into $A^{p,q}_{\vect b+\vect d-\vect{s'}-\vect{s''}}(D)$, by means of the preceding computations we may reduce to the case in which $\mi$ is $\neq0$ and has compact support in $D$. Let $R$ be the diameter of $\Supp{\mi}$.
Let $(f_j)_{j\in \N}$ be a  bounded sequence in $A^{p,q}_{\vect s}(D)$, and let us prove that $(T_{\mi,\vect{s'}}f_j)$ has a convergent subsequence. Observe first that, since $A^{p,q}_{\vect s}(D)$ embeds continuously into the Fréchet--Montel space $\Hol(D)$, we may assume that $(f_j)$ converges to some $f\in A^{p,q}_{\vect s}(D)$ locally uniformly. Up to replacing $(f_j)$ with $(f_j-f)$, we may therefore assume that $f=0$. 
Now, observe that, by~\cite[Theorem 2.47]{CalziPeloso}, there is a constant $C_3>0$ such that
\[
\frac{1}{C_3} \abs*{B_{(\zeta,z)}^{\vect{s'}}(\zeta',z')}\meg \abs*{B_{(\zeta,z)}^{\vect{s'}}(\zeta'',z'')}\meg C_3\abs*{B_{(\zeta,z)}^{\vect{s'}}(\zeta',z')}
\]
for every $(\zeta,z),(\zeta',z'),(\zeta'',z'')\in D$ such that $d((\zeta',z'),(\zeta'',z''))\meg R$. Hence,
\[
\abs{T_{\mi,\vect{s'}} f_j}\meg C_3 \abs{B_{(\zeta,z)}^{\vect{s'}}} \abs{\mi}(D) \norm{\chi_{\Supp{\mi}}f_j}_{L^\infty(D)},
\]
where $(\zeta,z)$ is a (fixed) element of $\Supp{\mi}$. It then follows that
\[
\norm{T_{\mi,\vect{s'}} f_j}_{A^{p,q}_{\vect b+\vect d-\vect {s'}-\vect{s''}}(D)}\meg C_3 \norm{B_{(\zeta,z)}^{\vect{s'}}}_{A^{p,q}_{\vect b+\vect d-\vect {s'}-\vect{s''}}(D)} \abs{\mi}(D) \norm{\chi_{\Supp{\mi}}f_j}_{L^\infty(D)},
\]
so that $(T_{\mi,\vect{s'}} f_j)$ converges to $0$ in $A^{p,q}_{\vect b+\vect d-\vect {s'}-\vect{s''}}(D)$.\footnote{Notice that $B_{(\zeta,z)}^{\vect{s'}}\in A^{p,q}_{\vect b+\vect d-\vect {s'}-\vect{s''}}(D)$ since $T_{\delta_{(\zeta,z)},\vect{s'}}\colon A^{p,q}_{\vect s}(D)\to A^{p,q}_{\vect b+\vect d-\vect{s'}-\vect{s''}}(D)$ is continuous by the preceding computations.} The assertion follows by the arbitrariness of $(f_j)$.
\end{proof}

 We now recall the definition of Schatten classes.  For lack of a
precise reference, in 
Section~\ref{sec:Schatten} we collect and describe  the main facts that
we use.

\begin{deff}\label{def:Schatten-classes}
Let $H_1,H_2$ be two hilbertian spaces, and take $p\in
]0,\infty[$. Then, we define $\Lin^p(H_1;H_2)=\Lin^p_0(H_1;H_2)$ as
the space of  $T\in \Lin(H_1;H_2)$ such that  
\[
\norm{T}_{\Lin^p(H_1;H_2)}\coloneqq (\tr((T^*T)^{p/2}))^{1/p}=(\tr((T T^*)^{p/2}))^{1/p}<\infty.
\]
We also define $\Lin^\infty(H_1;H_2)\coloneqq \Lin(H_1;H_2)$, and
$\Lin^\infty_0(H_1;H_2)$ as the space of compact linear operators from
$H_1$ into $H_2$. 
\end{deff}

\begin{teo}\label{prop:33}
Take $p\in ]0,\infty[$ and $\vect s,\vect{s'}, \vect{s''}\in \R^r$ such that $\vect s,\vect{s''},\vect b+\vect d-\vect{s'}-\vect{s''}\in \frac 1 4 \vect m+(\R_+^*)^r$.
Consider the following conditions:
\begin{enumerate}
\item[{\em(1)}] $T_{\mi,\vect{s'}}$ induces an element of $\Lin^p(A^{2,2}_{\vect s}(D);A^{2,2}_{\vect b+\vect d-\vect {s'}-\vect{s''}}(D))$;

\item[{\em(2)}]  $M_1(\mi)\in L^{p,p}_{(1+1/p)(\vect b+\vect d)- \vect {s}-\vect{s''}}(D)$.
\end{enumerate}
Then, {\em(2)} implies {\em(1)}. 
If, in addition, $\mi$ is positive and $\vect s = \vect{s''}$ when $p<1$, then {\em(1)} implies {\em(2)}. 
\end{teo}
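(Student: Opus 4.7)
\medskip

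\textbf{Plan of proof.} The two directions require rather different techniques; I would split the work accordingly.

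\medskip

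\emph{For $(2)\Rightarrow(1)$.} The plan is to discretize $T_{\mi,\vect{s'}}$ using a $(\delta,4)$-lattice $(\zeta_{j,k},z_{j,k})_{(j,k)\in J\times K}$ on $D$ with $\delta$ small, with associated $h_k=\rho(\zeta_{j,k},z_{j,k})$ and covering balls $U_{j,k}=B((\zeta_{j,k},z_{j,k}),4\delta)$. Fixing a subordinate partition of unity $(\varphi_{j,k})$ I would write $\mi=\sum_{j,k}\varphi_{j,k}\mi$ and, invoking the uniform comparability $|B^{\vect{s'}}_{(\zeta,z)}(\zeta',z')|\asymp|B^{\vect{s'}}_{(\zeta,z)}(\zeta'',z'')|$ on balls of fixed radius (Theorem~2.47 of~\cite{CalziPeloso}), approximate each $T_{\varphi_{j,k}\mi,\vect{s'}}$ by a rank-one operator
\[
R_{j,k}\colon f\mapsto |\mi|(U_{j,k})\,\langle f,\overline{B^{\vect{s'}}_{(\zeta_{j,k},z_{j,k})}}\rangle_{\vect s}\,B^{\vect{s'}}_{(\zeta_{j,k},z_{j,k})}
\]
(viewed as an element of $\Lin(A^{2,2}_{\vect s};A^{2,2}_{\vect b+\vect d-\vect{s'}-\vect{s''}})$). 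Its Schatten $p$-norm is the product of the two reproducing-kernel norms, which by Proposition~2.41 of~\cite{CalziPeloso} gives a power of $\Delta_\Omega(h_k)$; the exponent works out to be exactly the one appearing in (2) after the $p$-th power is taken. Summing these rank-one operators via the quasi-triangle inequality $\|A+B\|_{\Lin^p}^{\min(p,1)}\leq\|A\|_{\Lin^p}^{\min(p,1)}+\|B\|_{\Lin^p}^{\min(p,1)}$ and, for $p>1$, a duality bootstrap through a block-diagonal rearrangement, one is reduced to comparing $\sum_{j,k}|\mi|(U_{j,k})^p\Delta_\Omega^{p[(1+1/p)(\vect b+\vect d)-\vect s-\vect{s''}]}(h_k)$ with $\|M_1(\mi)\|_{L^{p,p}_{(1+1/p)(\vect b+\vect d)-\vect s-\vect{s''}}}^p$, a standard lattice--integral equivalence.

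\medskip

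\emph{For $(1)\Rightarrow(2)$ with $\mi\geq0$.} The idea is to test $T_{\mi,\vect{s'}}$ on normalized reproducing kernels. Setting $b^{\vect{s'}}_{(\zeta,z)}\coloneqq\Delta_\Omega^{(\vect b+\vect d)/2-\vect s-\vect{s'}/2-\vect{s''}/2}(\rho(\zeta,z))B^{\vect{s'}}_{(\zeta,z)}$ (the appropriate $L^2$-normalization so that it is of unit norm, up to constants), a direct computation using $(\Delta_\Omega^{-\vect{s'}}\circ\rho)\,\dd\nu_D$-duality gives
\[
\langle T_{\mi,\vect{s'}}b^{\vect{s'}}_{(\zeta,z)}\,\vert\,b^{\vect{s'}}_{(\zeta,z)}\rangle_{\vect{s'}} \gtrsim \Delta_\Omega^{\vect s+\vect{s''}-(1+1/p)(\vect b+\vect d)}(\rho(\zeta,z))\,M_1(\mi)(\zeta,z),
\]
again by the local near-constancy of $B^{\vect{s'}}_{(\zeta,z)}$ on $B((\zeta,z),1)$ (Theorem~2.47 of~\cite{CalziPeloso}). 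For $p\geq 1$, the positivity of $T_{\mi,\vect{s'}}$ combined with the Berezin-type inequality from Section~\ref{sec:Schatten} (which states that the covariant symbol satisfies $\|\widetilde T\|_{L^p(D,\nu_D)}\lesssim\|T\|_{\Lin^p}$) yields (2) by integrating the lower bound against $\nu_D$. For $0<p<1$, the assumption $\vect s=\vect{s''}$ makes $T_{\mi,\vect{s'}}$ a positive endomorphism of the single Hilbert space $A^{2,2}_{\vect s}(D)$; I would then use a sufficiently dense $(\delta,4)$-lattice together with the $p$-quasi-triangle inequality to split $\mi=\sum_{j,k}\mi|_{U_{j,k}}$ into pieces whose Toeplitz operators are nearly rank-one (by the previous paragraph), which are moreover nearly mutually orthogonal. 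This last point, combined with the fact that $\|\cdot\|_{\Lin^p}^p$ is additive on orthogonal rank-one positive operators, gives the desired lower bound $\sum_{j,k}|\mi|(U_{j,k})^p\Delta_\Omega^{p[\vect s+\vect{s''}-(1+1/p)(\vect b+\vect d)]}(h_k)\lesssim\|T_{\mi,\vect{s'}}\|_{\Lin^p}^p$, from which (2) follows as above.

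\medskip

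\emph{Main obstacle.} The delicate step is the $p<1$ direction of $(1)\Rightarrow(2)$. For $p\geq 1$ the Berezin transform argument is a standard duality/convexity manipulation, but for $p<1$ the diagonal-vs-eigenvalue inequality $\sum|\langle Te_j,e_j\rangle|^p\leq\|T\|_{\Lin^p}^p$ goes \emph{the wrong way} (by Schur--Horn, for concave $x\mapsto x^p$). This is precisely the reason for imposing $\vect s=\vect{s''}$: the operator becomes a positive endomorphism, and one can instead use that disjoint positive rank-one pieces contribute additively in the quasi-norm $\|\cdot\|_{\Lin^p}^p$, together with a careful almost-orthogonality estimate for the normalized kernels $b^{\vect{s'}}_{(\zeta_{j,k},z_{j,k})}$ along the lattice. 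Getting the almost-orthogonality sharp enough — so that the cross-terms can be absorbed after taking $\delta$ small — is the main technical point.
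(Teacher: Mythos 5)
Your plan for the necessity direction $(1)\Rightarrow(2)$ is essentially the one the paper follows: for $p\Meg 1$ the paper tests against the atoms $A_1e_{j,k}$ coming from the atomic decomposition and applies the discrete criterion $\sum_j\abs{\langle Tv_j\vert v_j\rangle}^p\meg\norm{T}_{\Lin^p}^p$ of Proposition~\ref{prop:22} (note that Section~\ref{sec:Schatten} contains no continuous Berezin-transform inequality, so as stated you would still have to supply one, or discretize as the paper does); for $p<1$ the paper makes your ``almost-orthogonality'' precise by comparing $X_{\mi'}=A_2^*\Ic T_{\mi',\vect{s'}}A_1$ with its diagonal part, absorbing the off-diagonal contribution for $R$ large via the Dunford--Schwartz lemma, and then summing over finitely many shifted lattices to recover the integral condition (2). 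So that half of your proposal is sound in outline. The genuine gaps are in the sufficiency direction $(2)\Rightarrow(1)$.

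First, for $p\meg 1$ you approximate each localized piece $T_{\varphi_{j,k}\mi,\vect{s'}}$ by a rank-one operator $R_{j,k}$ and then sum with the $p$-triangle inequality. But $T_{\varphi_{j,k}\mi,\vect{s'}}$ is a continuous superposition of rank-one operators, not a small perturbation of one: for $w\in U_{j,k}$ the kernels $B^{\vect{s'}}_{w}$ are comparable to $B^{\vect{s'}}_{(\zeta_{j,k},z_{j,k})}$ in modulus but are not proportional to it as vectors, so the error $T_{\varphi_{j,k}\mi,\vect{s'}}-R_{j,k}$ is an operator of exactly the same nature as the one you are trying to bound, and nothing forces its $\Lin^p$ quasi-norm to be small; the step is circular as written. (For \emph{positive} $\mi$ one could replace norm approximation by operator domination between positive operators, but the implication $(2)\Rightarrow(1)$ is asserted for arbitrary complex Radon measures.) The paper sidesteps this entirely: it bounds $\norm{A_2^*T_{\mi,\vect{s'}+\vect{s_1}}A_1}_{\Lin^p}^p$ by the sum of the $p$-th powers of all matrix entries (Proposition~\ref{prop:34}), and each entry is a single integral against $\dd\mi$ whose modulus can be taken inside, so only $\abs{\mi}$ ever intervenes and no positivity or rank-one structure is needed. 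Second, for $p>1$ a ``duality bootstrap through a block-diagonal rearrangement'' is not an argument; the mechanism the paper actually uses is complex interpolation (Proposition~\ref{prop:19}) between the $\Lin^1$ estimate just obtained and the $\Lin^\infty$ Carleson-measure estimate, applied to the holomorphic family $\mi_z$ obtained by raising the weights $\Delta_\Omega^{\vect{s'}}(h_k)\abs{\mi}(B_{j,k})$ to the power $p(1-z)-1$, followed by an approximation by compactly supported measures. This interpolation step is the substantive content of the $p>1$ case and is absent from your plan.
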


This extends~\cite[Theorem 4.2]{NanaSehba}, where the case in which $\vect s=\vect{s''}\in \R \vect 1_r$, $\vect{s'}=\vect d-2 \vect s$, and $D$ is an irreducible symmetric tube domain, is considered.
This also extends~\cite[Theorem 2.1]{LiLuecking}, where the case in which $D$ is a strongly pseudoconvex domain is considered.\footnote{Note that a homogeneous Siegel domain is strongly pseudoconvex if and only if $r=1$, since the Shilov boundary of a strongly pseudoconvex domain is its topological boundary (cf., e.g.,~\cite[Theorem 15.3]{Fuks}).}

Before we pass to the proof, we need two lemmas. We define $i^{\vect s}= e^{i \frac{\pi}{2} (s_1+\cdots+s_r)}$ for every $\vect s\in \C^r$ to simplify the notation.

\begin{lem}\label{lem:7}
Take $\vect s,\vect{s'}\in \R^r$ such that $\vect s,\vect s-\vect{s'}\in \frac{1}{4}\vect{m}+(\R_+^*)^r$. Then, the following hold:
\begin{enumerate}
\item[{\em (1)}] denoting by $\Ic$ the isomorphism of $ A^{2,2}_{\vect s}(D)$ onto $ A^{2,2}_{\vect s-\vect{s'}}(D)$ which induces the endomorphism $f\mapsto f*I^{\vect{s'}}_\Omega$ of $\Ec(\Sc_{\Omega,L}(\Nc))$  (cf.~\cite[Proposition 5.13]{CalziPeloso}), 
\[
\Ic\big(B^{\vect b+\vect d-2 \vect s}_{(\zeta,z)}\big)= \frac{2^{\vect{s'}}\Gamma_{\Omega'}(2\vect s-\vect{s'}-\vect b-\vect d)}{i^{\vect{s'}}\Gamma_{\Omega'}(2\vect s-\vect b-\vect d)} B^{\vect b+\vect d-2 \vect s+\vect{s'}}_{(\zeta,z)} 
\]
for every $(\zeta,z)\in D$;

\item[{\em(2)}] for every $f\in \Ec(\Sc_{\Omega,L}(\Nc))$,
\[
\langle f_h*I^{\vect{s'}}_\Omega\Big\vert \big(B^{\vect b+\vect d-2\vect s}_{(\zeta,z)}\big)_h\rangle_{L^2(\Nc)}= \frac{2^{\vect{s'}}\Gamma_{\Omega'}(2\vect s-\vect{s'}-\vect b-\vect d)}{i^{\vect{s'}}\Gamma_{\Omega'}(2\vect s-\vect b-\vect d)} \langle f_h\Big\vert \big(B^{\vect b+\vect d-2\vect s+\vect{s'}}_{(\zeta,z)}\big)_h\rangle_{L^2(\Nc)}
\]
for every $(\zeta,z)\in D$, and for every $h\in\Omega$.
\end{enumerate}
\end{lem}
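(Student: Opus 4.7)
The proof splits naturally into the two parts, and the common engine is Fourier analysis on $\Nc$ via the Bargmann representations $\pi_\lambda$, $\lambda\in\Omega'$. The guiding observation is that the Riemann--Liouville distribution $I^{\vect{s'}}_\Omega$ is central in $\Nc$ and its abelian Fourier transform on the centre $F$ equals $\Delta_{\Omega'}^{\vect{s'}}$. Hence, whenever convolution with $I^{\vect{s'}}_\Omega$ makes sense on a function $g$ on $\Nc$,
\[
\pi_\lambda(g*I^{\vect{s'}}_\Omega)=\Delta_{\Omega'}^{\vect{s'}}(\lambda)\,\pi_\lambda(g)
\qquad\text{for a.e. }\lambda\in\Omega'.
\]
This is the only nontrivial input beyond the Gindikin-type Laplace representation of the kernels $B^{\vect a}_{(\zeta,z)}$ already established in~\cite{CalziPeloso}.

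For part (1), I would first check that $B^{\vect b+\vect d-2\vs}_{(\zeta,z)}\in A^{2,2}_{\vs}(D)$ and $B^{\vect b+\vect d-2\vs+\vect{s'}}_{(\zeta,z)}\in A^{2,2}_{\vs-\vect{s'}}(D)$; this is immediate from the formula for $K_\vs$ in Section~\ref{sec:fnct-sp} and the hypothesis $\vs,\vs-\vect{s'}\in\tfrac14\vect m+(\R_+^*)^r$ (which, together with $\vect{s'}\in\vect d+(\R_+^*)^r$ as forced by the gamma factors below, gives the non-triviality of both spaces). Since $\Ec(\Sc_{\Omega,L}(\Nc))$ is dense in $A^{2,2}_{\vs}(D)$ and $\Ic$ is, by construction, the continuous extension of $f\mapsto f*I^{\vect{s'}}_\Omega$, it is enough to verify the claimed identity on the fibres: for each $h\in\Omega$,
\[
(B^{\vect b+\vect d-2\vs}_{(\zeta,z)})_h*I^{\vect{s'}}_\Omega
=\frac{2^{\vect{s'}}\Gamma_{\Omega'}(2\vs-\vect{s'}-\vect b-\vect d)}
{i^{\vect{s'}}\Gamma_{\Omega'}(2\vs-\vect b-\vect d)}\,
(B^{\vect b+\vect d-2\vs+\vect{s'}}_{(\zeta,z)})_h.
\]
Writing $\rho_0=\rho_0(\zeta,z,\zeta',x',h)=\tfrac{1}{2i}(x'+i\Phi(\zeta')+ih-\bar z)-\Phi(\zeta',\zeta)\in\Omega+iF$ and using the analytically continued Gindikin integral
\[
\Delta_\Omega^{\vect a}(\rho_0)
=\frac{(2i)^{-\vect a}}{\Gamma_{\Omega'}(-\vect a-\vect d)}
\int_{\Omega'}\Delta_{\Omega'}^{-\vect a-\vect d}(\lambda)\,
e^{-\langle\lambda,\rho_0\rangle}\,\dd\lambda
\]
(valid in our parameter range by analytic continuation, cf.~the corresponding formulas in~\cite{CalziPeloso}), multiplication under the integral by $\Delta_{\Omega'}^{\vect{s'}}(\lambda)$ shifts $\vect a\mapsto\vect a-\vect{s'}$ in the exponent of $\Delta_{\Omega'}$, and the two normalizing constants combine into exactly $\frac{(2i)^{-\vect a}}{(2i)^{-(\vect a-\vect{s'})}}\cdot\frac{\Gamma_{\Omega'}(-\vect a+\vect{s'}-\vect d)}{\Gamma_{\Omega'}(-\vect a-\vect d)}$. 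Specialising to $\vect a=\vect b+\vect d-2\vs$ produces the stated coefficient.

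Part (2) is then a direct consequence of (1) through the Plancherel formula of Section~\ref{sec:2}: applying Plancherel to $g_1=f_h*I^{\vect{s'}}_\Omega$ and $g_2=(B^{\vect b+\vect d-2\vs}_{(\zeta,z)})_h$ brings out a factor $\Delta_{\Omega'}^{\vect{s'}}(\lambda)$ under the trace integral; since this factor is real, we may transfer it onto $\pi_\lambda(g_2)^*$, and the fibre-wise identity established in (1) rewrites the resulting expression as $C\,\pi_\lambda((B^{\vect b+\vect d-2\vs+\vect{s'}}_{(\zeta,z)})_h)^*$, where $C$ is the constant of the statement. A second application of Plancherel yields the asserted equality. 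The main technical obstacle is therefore not the Fourier manipulation but the careful bookkeeping of the factors $(2i)^{\vect{s'}}$ and $\Gamma_{\Omega'}$ coming from the Gindikin representation and the analytic continuation of the Laplace transform needed to cover the full parameter range $\vs,\vs-\vect{s'}\in\tfrac14\vect m+(\R_+^*)^r$.
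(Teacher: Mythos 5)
Your overall strategy coincides with the paper's: the paper's proof consists precisely of quoting from \cite{CalziPeloso} the explicit formula
\[
\pi_\lambda\big( \big(B^{\vect b+\vect d-2\vect s}_{(\zeta,z)}\big)_h  \big)=  c'_{\vect s}\, \chi_{\Omega'}(\lambda)\, \Delta_{\Omega'}^{2\vect s}(\lambda)\,e^{-\langle\lambda, \rho(\zeta,z)+h \rangle}\, \pi_\lambda(\zeta,\Rea z)P_{\lambda,0},
\qquad c'_{\vect s}= \frac{4^{\vect{s}} c}{ \Gamma_{\Omega'}(2\vect s-\vect b-\vect d) },
\]
combining it with the multiplier property of $I^{\vect{s'}}_\Omega$ (their Lemma 2.21) for part (1), and with a Parseval-type identity (their Proposition 4.11) for part (2). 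So the route — Gindikin/Laplace representation of the kernel fibres, convolution with $I^{\vect{s'}}_\Omega$ as a Fourier multiplier, Plancherel for (2) — is the right one and the same one.

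There is, however, a genuine error in your ``guiding observation'', and it is not absorbed by the bookkeeping you defer to \cite{CalziPeloso}. With the paper's normalizations, $I^{\vect{s'}}_\Omega=\frac{1}{\Gamma_\Omega(\vect{s'})}\Delta_\Omega^{\vect{s'}}\cdot\nu_\Omega$ has Laplace transform $\Delta_{\Omega'}^{-\vect{s'}}$, so the multiplier of $g\mapsto g*I^{\vect{s'}}_\Omega$ on the Fourier side is $i^{-\vect{s'}}\Delta_{\Omega'}^{-\vect{s'}}(\lambda)$, \emph{not} $\Delta_{\Omega'}^{+\vect{s'}}(\lambda)$ as you assert. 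Run through your own Gindikin integral, your multiplier shifts $\vect a\mapsto\vect a-\vect{s'}$ and would yield $B^{\vect b+\vect d-2\vect s-\vect{s'}}_{(\zeta,z)}$ with leading factor $(2i)^{-\vect{s'}}$ — contradicting both the exponent $\vect b+\vect d-2\vect s+\vect{s'}$ and the factor $2^{\vect{s'}}/i^{\vect{s'}}$ in the statement (your final constant $(2i)^{-\vect{s'}}$ already disagrees with $2^{\vect{s'}}i^{-\vect{s'}}$, which should have been a red flag); it is also incompatible with $\Ic$ mapping $A^{2,2}_{\vect s}$ onto the \emph{larger-weight} space $A^{2,2}_{\vect s-\vect{s'}}$. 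With the correct multiplier, the displayed formula for $\pi_\lambda((B^{\vect b+\vect d-2\vect s})_h)$ gives the coefficient as the ratio $i^{-\vect{s'}}c'_{\vect s}/c'_{\vect s-\vect{s'}/2}=2^{\vect{s'}}i^{-\vect{s'}}\Gamma_{\Omega'}(2\vect s-\vect{s'}-\vect b-\vect d)/\Gamma_{\Omega'}(2\vect s-\vect b-\vect d)$, exactly as stated. Two smaller points: since $B^{\vect b+\vect d-2\vect s}_{(\zeta,z)}\notin\Ec(\Sc_{\Omega,L}(\Nc))$, you must still justify that the continuous extension $\Ic$ is computed on these kernels by fibre-wise convolution (the paper delegates this to \cite[Lemma 2.21]{CalziPeloso}); and in (2) the multiplier is not real (it carries $i^{-\vect{s'}}$), so moving it onto $\pi_\lambda(g_2)^*$ conjugates it, and one must check that this conjugation cancels against the antilinearity of the second slot of the inner product.
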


\begin{proof}
Observe that~\cite[Corollary 1.41 and Propositions 2.14 and 3.11]{CalziPeloso} show that
\[
\pi_\lambda\big( \big(B^{\vect b+\vect d-2\vect s}_{(\zeta,z)}\big)_h  \big)=  c'_{\vect s} \chi_{\Omega'}(\lambda) \Delta_{\Omega'}^{2\vect s}(\lambda)e^{-\langle\lambda, \rho(\zeta,z)+h \rangle} \pi_\lambda(\zeta,\Rea z)P_{\lambda,0}
\]
for almost every $\lambda\in F'\setminus W$, where
\[
c'_{\vect s}\coloneqq \frac{4^{\vect{s}} c}{ \Gamma_{\Omega'}(2\vect s-\vect b-\vect d) }
\]
for a suitable constant $c>0$.
Then, (1) follows by means of~\cite[Lemma 2.21]{CalziPeloso}, while (2) follows by means of~\cite[Proposition 4.11]{CalziPeloso}.
\end{proof}

\begin{lem}\label{lem:16}
Take $\vect s,\vect{s'}, \vect{s''},\vect{s'''}\in \R^r$ such that 
\[
\vect s,\vect{s''},\vect b+\vect d-\vect{s'}-\vect{s''}, \vect b+\vect d-\vect{s'}-\vect{s''}-\vect{s'''}\in \frac 1 4 \vect m+(\R_+^*)^r.
\] 
Take $\mi\in \cM(D)$ such that $M_1(\mi)\in L^{\infty,\infty}_{\vect b+\vect d-\vect s-\vect{s''},0}(D)$, and denote by $\Ic$ the isomorphism of $A^{2,2}_{\vect b+\vect d-\vect{s'}-\vect{s''}}(D)$ onto $ A^{2,2}_{\vect b+\vect d-\vect{s'}-\vect{s''}-\vect{s'''}}(D)$ which induces the automorphism $f \mapsto f*I^{\vect{s'''}}_\Omega$ of $\Ec(\Sc_{\Omega,L}(\Nc))$ (cf.~\cite[Proposition 5.13]{CalziPeloso}). Then,
\[
\Ic T_{\mi,\vect{s'}}=T_{\mi,\vect{s'}+\vect{s'''}}.
\]
\end{lem}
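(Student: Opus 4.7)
The plan is to verify the operator identity $\Ic T_{\mi,\vect{s'}}=T_{\mi,\vect{s'}+\vect{s'''}}$ by checking pointwise equality of the images of an arbitrary $f\in A^{2,2}_{\vect s}(D)$ at a point $(\zeta_0,z_0)\in D$. First I would establish that both sides define continuous linear maps from $A^{2,2}_{\vect s}(D)$ into $A^{2,2}_{\vect t}(D)$, where $\vect t=\vect b+\vect d-\vect{s'}-\vect{s''}-\vect{s'''}$. The hypotheses yield $\vect s\in\frac{1}{4}\vect m+(\R_+^*)^r$; moreover, combining $\vect b+\vect d-\vect{s'}-\vect{s''}\in\frac{1}{4}\vect m+(\R_+^*)^r$ with $\vect{s''}\in\frac{1}{4}\vect m+(\R_+^*)^r$ gives $\vect{s'}\in\vect b+\vect d-\frac{1}{2}\vect m-(\R_+^*)^r$. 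Together with the continuity of $P_{\vect{s'}}$ on $L^{2,2}_{\vect{s''},0}(D)$ furnished by Proposition~\ref{prop:26} and the standing hypothesis $M_1(\mi)\in L^{\infty,\infty}_{\vect b+\vect d-\vect s-\vect{s''},0}(D)$, Theorem~\ref{prop:14} yields continuity of $T_{\mi,\vect{s'}}\colon A^{2,2}_{\vect s}(D)\to A^{2,2}_{\vect b+\vect d-\vect{s'}-\vect{s''}}(D)$; post-composing with the continuous isomorphism $\Ic$ from~\cite[Proposition 5.13]{CalziPeloso} preserves continuity. Exactly the same argument with $\vect{s'}$ replaced by $\vect{s'}+\vect{s'''}$ and $\vect{s''}$ unchanged handles $T_{\mi,\vect{s'}+\vect{s'''}}$; the hypothesis on $M_1(\mi)$ depends only on $\vect s+\vect{s''}$ and is therefore preserved.

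Having ensured both composite operators land in $A^{2,2}_{\vect t}(D)$, I would fix $(\zeta_0,z_0)\in D$. Point evaluation at $(\zeta_0,z_0)$ in $A^{2,2}_{\vect t}(D)$ is realised, via the reproducing-kernel formula of~\cite[Remark 3.12]{CalziPeloso}, as a scalar multiple of the inner product against $B^{\vect b+\vect d-2\vect t}_{(\zeta_0,z_0)}$. Unrolling the definition of $T_{\mi,\vect{s'}+\vect{s'''}}f$ and exchanging integrals by Fubini — justified by the uniform kernel bound of~\cite[Theorem 2.47]{CalziPeloso} combined with the $M_1(\mi)$-assumption, exactly as in Lemma~\ref{lem:2} and in the proof of Proposition~\ref{prop:11} — yields
\[
T_{\mi,\vect{s'}+\vect{s'''}}f(\zeta_0,z_0)=\int_D B^{\vect{s'}+\vect{s'''}}_{(\zeta,z)}(\zeta_0,z_0)\,f(\zeta,z)\,\dd\mi(\zeta,z).
\]
For the left-hand side, one transfers $\Ic$ onto the reproducing kernel through its Hilbert-space adjoint and invokes Lemma~\ref{lem:7}(1)--(2): part~(1) states that $\Ic$ sends $B^{\vect{s'}}_{(\zeta,z)}$ to an explicit scalar multiple of $B^{\vect{s'}+\vect{s'''}}_{(\zeta,z)}$, while part~(2) supplies the matching scalar in the layer-wise pairing. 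Combining these with the reproducing-kernel normalising constant $c_{\vect t}$ of~\cite[Remark 3.12]{CalziPeloso}, a direct calculation shows all factors cancel, producing precisely the same integral as above and therefore the desired identity.

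The main obstacle I anticipate is the careful bookkeeping of the various constants ($2^{\vect{s'''}}$, $i^{\vect{s'''}}$, and the ratios of $\Gamma_{\Omega'}$-values) produced by Lemma~\ref{lem:7}(1)--(2) and by the reproducing-kernel normalisation of~\cite[Remark 3.12]{CalziPeloso}; in particular one must verify that the normalisation of $\Ic$ coming from~\cite[Proposition 5.13]{CalziPeloso} absorbs precisely the Pochhammer-type factor $(\vect{s'}+\frac{1}{2}\vect{m'})_{\vect{s'''}}$ that appears in the differential-operator special case $\vect{s'''}\in\N_{\Omega'}$ of Lemma~\ref{lem:8}, so that no extra multiplicative constant survives in the final identity. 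A secondary technical point is the rigorous justification of the Fubini exchange of $\Ic$ with $\int\,\dd\mi$; this can be handled either via a dominated-convergence argument using the continuous sesquilinear pairing of~\cite[Proposition 5.12]{CalziPeloso}, or, more concretely, by first checking the identity on the dense subspace $\Ec(\Sc_{\Omega,L}(\Nc))\subseteq A^{2,2}_{\vect s}(D)$ — where $\Ic$ acts literally as convolution by $I^{\vect{s'''}}_\Omega$ — and then extending by the continuity established in the first paragraph.
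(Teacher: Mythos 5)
There is a genuine gap, and it sits exactly where you park it as a ``secondary technical point'': the interchange of $\Ic$ with the integral $\int_D(\,\cdot\,)\,\dd\mi$. The operator $\Ic$ is \emph{not} a differential operator (this is precisely why the paper proves Lemma~\ref{lem:16} separately from Lemma~\ref{lem:8}); it is only defined as the continuous extension of $f\mapsto f*I^{\vect{s'''}}_\Omega$ from the dense subspace $\Ec(\Sc_{\Omega,L}(\Nc))$ of the \emph{target} space $A^{2,2}_{\vect b+\vect d-\vect{s'}-\vect{s''}}(D)$. Your remedy (b) approximates in the wrong variable: taking $f$ in the dense subspace $\Ec(\Sc_{\Omega,L}(\Nc))\subseteq A^{2,2}_{\vect s}(D)$ does not put $T_{\mi,\vect{s'}}f$ into $\Ec(\Sc_{\Omega,L}(\Nc))$ --- whether $T_{\mi,\vect{s'}}f$ is ``nice'' depends on $\mi$, not on $f$ --- so $\Ic$ still does not act ``literally'' on it, and nothing is gained. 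Your remedy (a) (dominated convergence via the pairing of~\cite[Proposition 5.12]{CalziPeloso}) is not an argument: to pass a continuous linear functional (evaluation against $\Ic^{*}$ of a reproducing kernel) inside $\int_D B^{\vect{s'}}_{(\zeta,z)}f(\zeta,z)\,\dd\mi(\zeta,z)$ one needs this to be a weak (Pettis-type) integral in the Bergman space, or else absolute convergence of the resulting double integral over $D\times D$; under the stated hypotheses ($\mi$ an arbitrary Radon measure with $M_1(\mi)\in L^{\infty,\infty}_{\vect b+\vect d-\vect s-\vect{s''},0}(D)$, possibly complex and of unbounded support) neither is automatic, and verifying the Fubini hypothesis would itself require the estimates you are trying to avoid.

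The paper resolves this by approximating the \emph{measure}, in three stages: for finitely supported $\mi$ the identity reduces to the action of $\Ic$ on single kernels (the Lemma~\ref{lem:7} computation); for compactly supported $\mi$ one takes a bounded filter of finitely supported measures converging vaguely to $\mi$, notes that $T_{\mi',\vect{s'}}f$ converges locally uniformly and boundedly, hence weakly in the Hilbert space, and uses that $\Ic$ is weakly continuous; for general $\mi$ one truncates, $\mi_\ell=\chi_{B((0,ie_\Omega),\ell)}\cdot\mi$, and uses $\norm{M_1(\mi-\mi_\ell)}_{L^{\infty,\infty}_{\vect b+\vect d-\vect s-\vect{s''}}(D)}\to 0$ together with Theorem~\ref{prop:14} to get operator-norm convergence of both sides. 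Your continuity considerations in the first paragraph are correct and correspond to the last stage, and your appeal to Lemma~\ref{lem:7} for the action on kernels matches the first stage (modulo the fact that Lemma~\ref{lem:7}(1) as stated only treats the kernel $B^{\vect b+\vect d-2\vect s}_{(\zeta,z)}$ attached to the source space, so the computation must be redone for $B^{\vect{s'}}_{(\zeta,z)}$ with the $\vect{s'}$ of this lemma); but the middle stage --- the approximation of $\mi$ itself, which is what actually licenses commuting $\Ic$ past the $\mi$-integral --- is missing from your plan and cannot be supplied by density in $f$.
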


Notice that the assertion is contained in Lemma~\ref{lem:8} if $\vs'''\in-\N_{\Omega'}$. The proof below is more delicate since $\Ic$ is no longer a differential operator.

\begin{proof}
By Lemma~\ref{lem:7}, the assertion is clear if $\mi$ has finite support. Now, assume that $\mi$ has compact support and observe that there is a bounded filter $\Ff$ on the space of measures on the with finite support which converges vaguely to $\mi$ (cf.~\cite[Corollary 1 to Theorem 1 of Chapter III, § 2, No.\ 4]{BourbakiInt1}). We may further assume that there is $\Ms\in \Ff$ such that every element of $\Ms$ is supported in $\Supp{\mi}$. Therefore, it is clear that $T_{\mi',\vect{s'}}f$ and $T_{\mi',\vect{s'},\vect{s'''}}f$ converge locally uniformly to $T_{\mi,\vect{s'}}f$ and $T_{\mi,\vect{s'},\vect{s'''}}f$, respectively, as $\mi'$ runs along $\Ff$, for every $f\in A^{2,2}_{\vect s}(D)$. Since they stay also bounded along $\Ff$, it is clear that $T_{\mi',\vect{s'}}f$ and $T_{\mi',\vect{s'},\vect{s'''}}f$ converge weakly to $T_{\mi,\vect{s'}}f$ and $T_{\mi,\vect{s'},\vect{s'''}}f$, respectively, as $\mi'$ runs along $\Ff$. Hence, $\Ic T_{\mi',\vect{s'}} f$ converges weakly to $\Ic T_{\mi,\vect{s'}}f$ as $\mi'$ runs along $\Ff$, whence
\[
\Ic T_{\mi,\vect{s'}}f= T_{\mi, \vect{s'}+\vect{s'''}}f
\]
for every $f\in A^{2,2}_{\vect s}(D)$.

Then, take $\mi\in \cM(D)$ such that $M_1(\mi)\in L^{\infty,\infty}_{\vect b+\vect d- \vect {s}-\vect{s''},0}(D)$. Observe that, if we define $\mi_\ell\coloneqq \chi_{B((0,i e_\Omega),\ell)}\cdot\mi$ for every $\ell\in \N^*$, then $\norm{M_1(\mi-\mi_\ell)}_{L^{\infty,\infty}_{\vect b+\vect d-\vect s-\vect{s''}}(D)}\to 0$ as $\ell\to \infty$, so that $T_{\mi_\ell,\vect{s'}}  $ and $T_{\mi_\ell,\vect{s'}+\vect{s'''}}$ converge to $T_{\mi,\vect{s'}}  $ and $T_{\mi,\vect{s'}+\vect{s'''}}$ in $\Lin(A^{2,2}_{\vect s}(D);A^{2,2}_{\vect b+\vect d-\vect{s'}-\vect{s''}})$ and $\Lin(A^{2,2}_{\vect s}(D);A^{2,2}_{\vect{s''}})$, respectively, by Theorem~\ref{prop:14}. The assertion follows.
\end{proof}

\begin{proof}[Proof of Theorem~\ref{prop:33}.]
(2) $\implies$ (1). Assume first that $p\in ]0,1]$. Define $\vect{s_1}\coloneqq \vect b+\vect d-\vect{s'}-2 \vect{s''}$, so that the automorphism $f\mapsto f*I^{\vect{s_1}}_\Omega$ of $\Ec(\Sc_{\Omega,L}(\Nc))$ induces an isomorphism $\Ic$ of $A^{2,2}_{\vect b+\vect d-\vect{s'}-\vect{s''}}(D)$ onto $A^{2,2}_{\vect{s''}}(D)$ (cf.~\cite[Proposition 5.13]{CalziPeloso}). Observe that, by Lemma~\ref{lem:16},
\[
\Ic T_{\mi,\vect{s'}}= T_{\mi, \vect{s'}+\vect{s_1}}.
\]

Then, take $\vect{s_2}\in \R^r$ and a $(\delta,R)$-lattice $(\zeta_{j,k},z_{j,k})_{j\in J,k\in K}$ on $D$, for some $\delta>0$ and some $R>1$, such that the mappings
\[
A_1\colon \ell^{2,2}(J,K)\ni \lambda \mapsto \sum_{j,k} \lambda_{j,k} B^{\vect{s_2}}_{(\zeta_{j,k},z_{j,k})} \Delta_\Omega^{(\vect b+\vect d)/2-\vect s-\vect{s_2}}(h_k)\in A^{2,2}_{\vect s}(D)
\]
and
\[
A_2\colon \ell^{2,2}(J,K)\ni \lambda \mapsto \sum_{j,k}\lambda_{j,k} B^{\vect{s_2}}_{(\zeta_{j,k},z_{j,k})} \Delta_\Omega^{(\vect b+\vect d)/2-\vect{s''}-\vect{s_2}}(h_k)\in A^{2,2}_{\vect{s''}}(D)
\]
are continuous and have a continuous linear section, where
$h_k\coloneqq \rho(\zeta_{j,k},z_{j,k})$ for every $(j,k)\in J\times
K$ (cf.~\cite[Proposition 3.15 and Corollary 5.16]{CalziPeloso}).  
Then, Proposition~\ref{prop:15} implies that there is a constant $C_1>0$ such that
\[
\norm{T_{\mi,\vect{s'}}}_{\Lin^p(A^{2,2}_{\vect s}(D);A^{2,2}_{\vect b+\vect d-\vect{s'}-\vect{s''}}(D))}\meg C_1 \norm{A_2^* T_{\mi,\vect{s'}+\vect{s_1}}A_1}_{\Lin^p(\ell^{2,2}(J,K))}.
\] 
In addition, Proposition~\ref{prop:34} shows that
\[
\norm{A_2^* T_{\mi,\vect{s'}+\vect{s_1}}A_1}_{\Lin^p(\ell^{2,2}(J,K))}^p\meg \sum_{(j,k),(j',k')\in J\times K} \abs*{\langle T_{\mi,\vect{s'}+\vect{s_1}} A_1 e_{j,k}\vert A_2 e_{j',k'}\rangle_{A^{2,2}_{\vect{s''}}(D)}  }^p,
\]
where $(e_{j,k})$ is the orthonormal basis of $\ell^{2,2}(J,K)$ defined by $e_{j,k}(j',k')\coloneqq \delta_{(j,k),(j',k')}$ for every $(j,k),(j',k')\in J\times K$.
Now,
\[
\begin{split}
&\abs*{\langle T_{\mi,\vect{s'}+\vect{s_1}} A_1 e_{j,k}\vert A_2 e_{j',k'}\rangle_{A^{2,2}_{\vect{s''}}(D)}  }^p\\
&\qquad= \Delta_\Omega^{p[(\vect b+\vect d)/2-\vect s-\vect{s_2}]}(h_k)\Delta_\Omega^{p[(\vect b+\vect d)/2-\vect {s''}-\vect{s_2}]}(h_{k'})\abs*{\langle T_{\mi,\vect{s'}+\vect{s_1}} B^{\vect{s_2}}_{(\zeta_{j,k},z_{j,k})} \Big\vert B^{\vect{s_2}}_{(\zeta_{j',k'},z_{j',k'})}\rangle_{A^{2,2}_{\vect{s''}}(D)} }^p\\
&\qquad=\frac{1}{c_{\vect{s'}+\vect{s_1}}^p}\Delta_\Omega^{p[(\vect b+\vect d)/2-\vect s-\vect{s_2}]}(h_k)\Delta_\Omega^{p[(\vect b+\vect d)/2-\vect {s''}-\vect{s_2}]}(h_{k'})\abs*{\int_{D} B^{\vect{s_2}}_{(\zeta_{j,k},z_{j,k})} \overline{B^{\vect{s_2}}_{(\zeta_{j',k'},z_{j',k'})}}\,\dd \mi }^p
\end{split}
\]
for every $(j,k),(j',k')\in J\times K$,
where the second equality follows from Theorem~\ref{prop:14}, since $2\vect{s''}=\vect b+\vect d-\vect{s'}-\vect{s_1}$ by our auxiliary assumption. Now, let $(B_{j,k})$ be a Borel partition of $D$ such that $B((\zeta_{j,k},z_{j,k}),\delta)\subseteq B_{j,k}\subseteq B((\zeta_{j,k},z_{j,k}),R\delta)$ for every $(j,k)\in J\times K$. In addition, by~\cite[Theorem 2.42 and Corollary 2.44]{CalziPeloso}, we may take a constant $C_2>0$ such that
\[
\frac{1}{C_2} \abs{B^{\vect{s_2}}_{(\zeta,z)}(\zeta',z')}\meg  \abs{B^{\vect{s_2}}_{(\zeta,z)}(\zeta'',z'')}\meg {C_2} \abs{B^{\vect{s_2}}_{(\zeta,z)}(\zeta',z')}
\]
for every $(\zeta,z),(\zeta',z'),(\zeta'',z'')\in D$ such that $d((\zeta',z'),(\zeta'',z''))<R\delta$,
and such that
\[
\frac{1}{C_2}\Delta_\Omega^{(\vect b+\vect d)/2-\vect s-\vect{s_2}}(h)\meg \Delta_\Omega^{(\vect b+\vect d)/2-\vect s-\vect{s_2}}(h')\meg C_2\Delta_\Omega^{(\vect b+\vect d)/2-\vect s-\vect{s_2}}(h)
\]
and
\[
\frac{1}{C_2}\Delta_\Omega^{(\vect b+\vect d)/2-\vect{s''}-\vect{s_2}}(h)\meg \Delta_\Omega^{(\vect b+\vect d)/2-\vect {s''}-\vect{s_2}}(h')\meg C_2\Delta_\Omega^{(\vect b+\vect d)/2-\vect{s''}-\vect{s_2}}(h)
\]
for every $h,h'\in \Omega$ such that $d(h,h')<R\delta$.
Then,
\[
\begin{split}
\abs*{\int_{D} B^{\vect{s_2}}_{(\zeta_{j,k},z_{j,k})} \overline{B^{\vect{s_2}}_{(\zeta_{j',k'},z_{j',k'})}}\,\dd \mi }^p&\meg C_2^{2p}\left( \sum_{j'',k''}\abs{\mi}(B_{j'',k''}) \abs{B^{\vect{s_2}}_{(\zeta_{j,k},z_{j,k})}(\zeta_{j'',k''}) B^{\vect{s_2}}_{(\zeta_{j',k'},z_{j',k'})}(\zeta_{j'',k''})} \right)^p\\
&\meg C_2^{2p}\sum_{j'',k''}\abs{\mi}(B_{j'',k''})^p \abs{B^{p\vect{s_2}}_{(\zeta_{j,k},z_{j,k})}(\zeta_{j'',k''}) B^{p\vect{s_2}}_{(\zeta_{j',k'},z_{j',k'})}(\zeta_{j'',k''})} 
\end{split}
\]
for every $(j,k),(j',k')\in J\times K$, since $p\meg 1$. Hence,
\[
\begin{split}
\norm{A_2^* T_{\mi,\vect{s'}+\vect{s_1}}A_1}_{\Lin^p(\ell^{2,2}(J,K))}^p&\meg \frac{C_2^{2p}}{c_{\vect{s'}+\vect{s_1}}^p}\sum_{j'',k''}\abs{\mi}(B_{j'',k''})^p \sum_{j,k} \Delta_\Omega^{p[(\vect b+\vect d)/2-\vect s-\vect{s_2}]}(h_k)\abs{B^{p\vect{s_2}}_{(\zeta_{j,k},z_{j,k})}(\zeta_{j'',k''})}\\
&\qquad\times\sum_{j',k'} \Delta_\Omega^{p[(\vect b+\vect d)/2-\vect {s''}-\vect{s_2}]}(h_{k'})\abs{B^{p\vect{s_2}}_{(\zeta_{j',k'},z_{j',k'})}(\zeta_{j'',k''})}\\
&\meg \frac{C_2^{6p}}{c_{\vect{s'}+\vect{s_1}}^p\nu_D(B((0,ie_\Omega),\delta))^2}\sum_{j'',k''}\abs{\mi}(B_{j'',k''})^p \norm{B^{p\vect{s_2}}_{(\zeta_{j'',k''})}}_{A^{1,1}_{p[(\vect b+\vect d)/2-\vect s-\vect{s_2}]+\vect b+\vect d}(D)}\\
&\qquad \times \norm{B^{p\vect{s_2}}_{(\zeta_{j'',k''})}}_{A^{1,1}_{p[(\vect b+\vect d)/2-\vect {s''}-\vect{s_2}]+\vect b+\vect d}(D)} .
\end{split}
\]
Now, by means of~\cite[Proposition 2.41]{CalziPeloso} we see that, provided that $\vect{s_2}$ is sufficiently small, there is a constant $C_3>0$ such that
\[
\norm{T_{\mi,\vect{s'}}}_{\Lin^p(A^{2,2}_{\vect s}(D);A^{2,2}_{\vect b+\vect d-\vect{s'}-\vect{s''}}(D))}^p\meg C_3\sum_{j'',k''}\Delta_\Omega^{p[\vect b+\vect d-\vect s-\vect{s''}]}(h_{k''})\abs{\mi}(B_{j'',k''})^p= C_3 \norm{\mi}_p^p,
\]
where 
\[
\norm{\mi}_p\coloneqq \left( \sum_{j,k}\Delta_\Omega^{p[\vect b+\vect d-\vect s-\vect{s''}]}(h_{k})\abs{\mi}(B_{j,k})^p \right)^{1/p},
\]
with the obvious modification when $p=\infty$. 

In addition, there is a constant $C_4>0$ such that
\[
\norm{T_{\mi,\vect{s'}}}_{\Lin(A^{2,2}_{\vect s}(D);A^{2,2}_{\vect b+\vect d-\vect{s'} -\vect{s''}}(D))}\meg C_4 \norm{\mi}_{\infty} 
\]
for every Radon measure $\mi$ on $D$ such that $\norm{\mi}_{\infty} <\infty$, as one sees by inspection of the proof of~\cite[Theorem 5.4]{CPCarleson} and of Theorem~\ref{prop:14}.

Now, take $p\in ]1,\infty[$ and assume that $\mi$ has compact support. Define
\[
\mi_z\coloneqq \norm{\mi}_{p}^{p(z-1)+1}\sum_{j,k} [\Delta_\Omega^{\vect{s'}}(h_k)\abs{\mi}(B_{j,k})]^{p (1-z)-1}\chi_{B_{j,k}}\cdot \mi, 
\]
and observe that the mapping
\[
F\colon z \mapsto T_{\mi_z,\vect{s'}}=\norm{\mi}_{p}^{p(z-1)+1}\sum_{j,k}[\Delta_\Omega^{\vect{s'}}\abs{\mi}(B_{j,k})]^{p (1-z)-1} T_{\chi_{B_{j,k}}\cdot \mi,\vect b+\vect d-2 \vect s}  \in \Lin(A^{2,2}_{\vect s}(D);A^{2,2}_{\vect b+\vect d-\vect {s'}-\vect{s''}}(D))
\]
is holomorphic on $\C$ and bounded on the closure of $S\coloneqq \Set{z\in \C\colon 0<\Rea z<1}$. In addition, $F(1/p')=T_{\mi,\vect{s'}}$,
\[
\norm{F(i t)}_{\Lin^1(A^{2,2}_{\vect s}(D);A^{2,2}_{\vect b+\vect d-\vect {s'}-\vect{s''}}(D))}\meg C_3 \norm{\mi_{it}}_{1}=C_3\norm{\mi}_{p}
\]
and 
\[
\norm{F(1+i t)}_{\Lin(A^{2,2}_{\vect s}(D);A^{2,2}_{\vect b+\vect d-\vect {s'}-\vect{s''}}(D))}\meg C_4 \norm{\mi_{1+it}}_{\infty}=C_4 \norm{\mi}_{p}.
\]
Therefore, Proposition~\ref{prop:19} implies that
\[
\norm{T_{\mi,\vect{s'}}}_{\Lin^p(A^{2,2}_{\vect s}(D);A^{2,2}_{\vect b+\vect d-\vect {s'}-\vect{s''}}(D))}\meg \max(C_3,C_4)\norm{\mi}_{p}.
\]

Finally, take $p\in ]1,\infty[$, and let $\mi$ be a Radon measure on $D$ such that $\norm{\mi}_{p}<\infty$. Define
\[
\mi_\ell\coloneqq \chi_{B((0,i e_\Omega), \ell)}\cdot \mi
\]
for every $\ell\in \N^*$, and observe that the preceding remarks show that
\[
\norm{T_{\mi_\ell,\vect{s'}}}_{\Lin^p(A^{2,2}_{\vect s}(D))} \meg\max(C_3,C_4)\norm{\mi}_{p}
\]
for every $\ell\in \N^*$. In addition, it is clear that $\norm{\mi-\mi_\ell}_{\infty}\to 0$ for $\ell\to \infty$, so that $T_{\mi_\ell,\vect{s'}}\to T_{\mi,\vect{s'}}$ in $\Lin(A^{2,2}_{\vect s}(D);A^{2,2}_{\vect b+\vect d-\vect{s'}-\vect{s''}}(D))$ for $\ell \to \infty$. Hence, Proposition~\ref{prop:17} shows that
\[
\norm{T_{\mi,\vect{s'}}}_{\Lin^p(A^{2,2}_{\vect s}(D);A^{2,2}_{\vect b+\vect d-\vect {s'}-\vect{s''}}(D))}\meg\max(C_3,C_4)\norm{\mi}_{p},
\]
whence the conclusion thanks to~\cite[Lemma 5.1]{CPCarleson}.

(1) $\implies$ (2). Assume first that $\mi$ is positive, $p\meg 1$, and $\vect s=\vect{s''}$. Define $\vect{s_1}$ and $\Ic$ as in the proof of the implication (2) $\implies$ (1). We also define $A_1$ and $A_2$ similarly, except for the fact that this time $(\zeta_{j,k},z_{j,k})_{j\in J,k\in K}$ will be an $(R,4)$-lattice (cf.~\cite[Lemma 2.55]{CalziPeloso}), so that $A_1$ and $A_2$ are still continuous, but not necessarily onto (cf.~\cite[Propositions 3.17 and 3.32, Theorem 3.34, and the proof of Corollary 5.14]{CalziPeloso}), provided that $\vect{s_2}$ is sufficiently small.
We shall define $B_{j,k}\coloneqq B((\zeta_{j,k},z_{j,k}),R\delta )$ for every $(j,k)\in J\times K$. The precise conditions to be imposed on $\delta$ and $R$ will be determined later on.

Observe that $\Ic T_{\mi,\vect{s'}}=T_{\mi,\vect{s'}+\vect{s_1}}$ by Proposition~\ref{prop:11} and Lemma~\ref{lem:16}. 
\[
\langle \Ic T_{\mi,\vect{s'}}f \vert f \rangle_{A^{2,2}_{\vect{s}}(D)}= \frac{1}{c_{\vect{s'}+\vect{s_1}}} \int_D \abs{f}^2\,\dd \mi
\]
for every $f\in A^{2,2}_{\vect s}(D)$, by Proposition~\ref{prop:11}, so that, in particular, $\Ic T_{\mi,\vect{s'}}$ is (self-adjoint and) positive.  In addition, if $\mi'$ is a positive Radon measure on $D$ and $\mi'\meg \mi$, then $\langle \Ic T_{\mi',\vect{s'}}f \vert f \rangle_{A^{2,2}_{\vect{s}}(D)}\meg \langle \Ic T_{\mi,\vect{s'}}f \vert f \rangle_{A^{2,2}_{\vect{s}}(D)}$ for every $f\in A^{2,2}_{\vect s}(D)$, so that $(\Ic T_{\mi',\vect{s'}})^{1/2}$ is the composite of $(\Ic T_{\mi,\vect{s'}})^{1/2}$ with a contraction of $A^{2,2}_{\vect s}(D)$. Hence, 
\[
\norm{\Ic T_{\mi',\vect{s'}}}_{\Lin^p(A^{2,2}_{\vect s}(D))}\meg \norm{\Ic T_{\mi,\vect{s'}}}_{\Lin^p(A^{2,2}_{\vect s}(D))}
\]
for every $\mi'$ as above, thanks to Proposition~\ref{prop:15}.

Define $X_{\mi'}\coloneqq A_2^* \Ic T_{\mi',\vect{s'}}A_1$, and observe that $X_{\mi'}\in \Lin^p(\ell^{2,2}(J,K))$. Observe that $X_{\mi'}= A_2^*  T_{\mi',\vect{s'}+\vect{s_1}}A_1$ by Proposition~\ref{prop:11} and Lemma~\ref{lem:16}. 
In addition, define $\Delta_{\mi'}$ as the \emph{diagonal} operator whose diagonal elements are the same as those of $X_{\mi'}$. Then, the computations of the proof of the implication (2) $\implies$ (1) show that
\[
\begin{split}
\norm{\Delta_{\mi'}}_{\Lin^p(\ell^{2,2}(J,K))}^p &=\frac{1}{c_{\vect{s'}+\vect{s_1}}^p}\sum_{j,k} \Delta_\Omega^{p[\vect b+\vect d-2\vect s-2\vect{s_2}]}(h_k)\left( \int_D \abs{B_{(\zeta_{j,k},z_{j,k})}^{\vect{s_2}}}^2\,\dd \mi'\right)^p\\
&\Meg \frac{1}{c_{\vect{s'}+\vect{s_1}}^p C_2^{2p}}\sum_{j,k} \Delta_\Omega^{p[\vect b+\vect d-2\vect s]}(h_k) \mi'(B_{j,k})^p
\end{split}
\]
for every positive Radon measure $\mi'\meg \mi$ on $D$, and that
\[
\begin{split}
&\norm{X_{\mi'}-\Delta_{\mi'}}_{\Lin^p(\ell^{2,2}(J,K))}^p\meg \frac{C_2^{6p}}{c_{\vect{s'}+\vect{s_1}}^p\nu_D(B((0,i e_\Omega),R/2))} \sum_{j,k} \mi'(B_{j,k})^p \int_{d((\zeta,z),(\zeta',z'))>R} \abs{B^{p\vect{s_2}}_{(\zeta_{j'',k''},z_{j'',k''})}(\zeta,z)} \\
&\qquad \times\abs{B^{p\vect{s_2}}_{(\zeta_{j'',k''},z_{j'',k''})}(\zeta',z')}\Delta_\Omega^{p[(\vect b+\vect d)/2-\vect s-\vect{s_2}]}(\rho(\zeta,z))\Delta_\Omega^{p[(\vect b+\vect d)/2-\vect {s''}-\vect{s_2}]}(\rho(\zeta',z'))\,\dd (\nu_D\otimes \nu_D)((\zeta,z),(\zeta',z'))
\end{split}
\]
for every positive Radon measure $\mi'\meg \mi$ on $D$ such that $\mi'(D\setminus \bigcup_{j,k} B_{j,k})=0$,
where $C_2$ is defined as in the proof of the implication (2) $\implies$ (1) and is independent of $\delta$ and $R$ as long as (say) $R\delta<1$.
Now, observe  that there is a constant $C_R>0$ such that
\[
\begin{split}
&\int_{d((\zeta,z),(\zeta',z'))>R} \abs{B^{p\vect{s_2}}_{(\zeta_{j'',k''},z_{j'',k''})}(\zeta,z)B^{p\vect{s_2}}_{(\zeta_{j'',k''},z_{j'',k''})}(\zeta',z')}\Delta_\Omega^{p[(\vect b+\vect d)/2-\vect s-\vect{s_2}]}(\rho(\zeta,z))\\
&\qquad\times\Delta_\Omega^{p[(\vect b+\vect d)/2-\vect {s''}-\vect{s_2}]}(\rho(\zeta',z'))\,\dd (\nu_D\otimes \nu_D)((\zeta,z),(\zeta',z'))=C_R \Delta_\Omega^{p[\vect b+\vect d-2\vect{s}]}(h_{k''})
\end{split}
\]
for every $k''\in K$, by homogeneity, and that $\lim_{R\to \infty} C_R=0$, provided that $\vect{s_2}$ is sufficiently small (cf.~\cite[Proposition 2.41]{CalziPeloso}).
Therefore, we may find $\delta, R>0$ such that $\delta R<1$ and
\[
\norm{X_{\mi'}-\Delta_{\mi'}}_{\Lin^p(\ell^{2,2}(J,K))}\meg \frac{1}{4 c_{\vect{s'}+\vect{s_1}}^p C_2^{2p}} \sum_{j,k} \Delta_\Omega^{p[\vect b+\vect d-2\vect{s}]}(h_k) \mi'(B_{j,k})^p
\]
for every positive Radon measure $\mi'\meg \mi$ on $D$ such that $\mi'(D\setminus \bigcup_{j,k} B_{j,k})=0$. Therefore,~\cite[Lemma 9 (b) of Chapter IX, § 9]{DunfordSchwartz} implies that
\[
\norm{X_{\mi'}}_{\Lin^p(\ell^{2,2}(J,K))}\Meg  \frac{1}{4 c_{\vect{s'}+\vect{s_1}}^p C_2^{2p}} \sum_{j,k} \Delta_\Omega^{p[\vect b+\vect d-2\vect{s}]}(h_k) \mi'(B_{j,k})^p
\]
for every positive Radon measure \emph{with compact support} $\mi'\meg
\mi$ on $D$ such that $\mi'(D\setminus \bigcup_{j,k} B_{j,k})=0$.  By
approximation, the same holds for every positive Radon measure
$\mi'\meg \mi$ on $D$ such that $\mi'(D\setminus \bigcup_{j,k}
B_{j,k})=0$. 

Now, observe that, by the proof of~\cite[Lemma 2.55]{CalziPeloso} we may find two countable families of affine automorphisms $(\phi_{j'})_{j'\in J'}$ and $(\psi_{k'})_{k'\in K'}$ such that the following hold:
\begin{itemize}
\item[(a)] $\phi_{j'}$ is induced by the action of some element of $b D$ on $D$, for every $j'\in J'$;

\item[(b)] $\psi_{k'}= g_{k'}\times t_{k'}$ for some $t_{k'}\in T_+$ and $g_{k'}\in GL(E)$ such that $t_{k'}\circ \Phi=\Phi\circ (g_{k'}\times g_{k'})$, for every $k'\in K'$;

\item[(c)] $(\psi_{k'}(\phi_{j'}(0,i e_\Omega)))_{j'\in J',k'\in K'}$ is an $(R,4)$-lattice on $D$.
\end{itemize}
Observe that, by homogeneity, $(\psi_{k'}(\phi_{j'}(\zeta,z)))_{j'\in J',k'\in K'}$ is an $(R,4)$-lattice on $D$ for every $(\zeta,z)\in D$.
Then, let $(\zeta_{\ell},z_\ell)_{\ell\in L}$ be a family of elements of $B((0,i e_\Omega), 4 R)$ which is maximal for the property that the balls $B((\zeta_\ell,z_\ell),R\delta/4)$, as $\ell$ runs through $L$, are pairwise disjoint. Observe that clearly
\[
\card(L)\meg \frac{\nu_D(B((0,i e_\Omega),4 R+R\delta/4))}{\nu_D(B((0,i e_\Omega),R\delta/4))},
\]
so that $L$ is finite. Then, applying the preceding arguments to the measures $\mi_\ell\coloneqq \sum_{j',k'} \chi_{B( \psi_{k'}(\phi_{j'}(\zeta_\ell,z_\ell)) , R\delta )}\cdot \mi$, we find
\[
\sum_{\ell\in L} \sum_{j',k'} \Delta_\Omega^{p[\vect b+\vect d-2\vect{s}]}(h_k) \mi(B( \psi_{k'}(\phi_{j'}(\zeta_\ell,z_\ell)) , R\delta ))^p<\infty.
\]
Now, observe that $ B((0,i e_\Omega),4 R)\subseteq \bigcup_{\ell\in L}B((\zeta_\ell,z_\ell),R\delta/2)$, by maximality, and that
\[
M_{R\delta/2}(\mi)(\zeta,z)\meg M_{R\delta}(\mi)(\psi_{k'}(\phi_{j'}(\zeta_\ell,z_\ell)))
\]
for every $(\zeta,z)\in B((\zeta_\ell,z_\ell),R\delta/2)$ and for every $\ell \in L$.
In addition, there is a constant $C_5>0$ such that
\[
\frac{1}{C_5} \Delta_\Omega^{p[\vect b+\vect d-2\vect{s}]}(h)\meg \Delta_\Omega^{p[\vect b+\vect d-2\vect{s}]}(h')\meg C_5\Delta_\Omega^{p[\vect b+\vect d-2\vect{s}]}(h)
\]
for every $h,h'\in \Omega$ such that $d(h,h')<4 R$ (cf.~\cite[Corollary 2.49]{CalziPeloso}).
By the arbitrariness of $\ell$ and $(\zeta,z)$, this implies that
\[
\begin{split}
&\int_{B((0,i e_\Omega),4 R)} M_{R\delta/2}(\mi)^p ( \Delta_\Omega^{p[\vect b+\vect d-2\vect{s}]}\circ \rho)\,\dd \nu_D \meg C_5 M_{R\delta/2}(\nu_D)(0,i e_\Omega)  \sum_{\ell\in L}  M_{R\delta}(\mi)( (\zeta_\ell,z_\ell))^p,
\end{split}
\]
so that, by homogeneity,
\[
\begin{split}
&\int_{B( \psi_{k'}(\phi_{j'}(0,i e_\Omega)),4 R)} M_{R\delta/2}(\mi)^p ( \Delta_\Omega^{p[\vect b+\vect d-2\vect{s}]}\circ \rho)\,\dd \nu_D \\
&\qquad \qquad\meg C_5 M_{R\delta/2}(\nu_D)(0,i e_\Omega)  \sum_{\ell\in L} \Delta_\Omega^{p[\vect b+\vect d-2\vect{s}]}(h_k) M_{R\delta}(\mi)( \psi_{k'}(\phi_{j'}(\zeta_\ell,z_\ell)))^p.
\end{split}
\]
Therefore,
\[
\int_{D} M_{R\delta/2}(\mi)^p ( \Delta_\Omega^{p[\vect b+\vect d-2\vect{s}]}\circ \rho)\,\dd \nu_D <\infty,
\]
that is, $M_{R\delta/2}\in L^{p,p}_{(1-1/p)(\vect b+\vect d)-2\vect{s}}(D)$. The conclusion follows from~\cite[Lemma 5.1]{CPCarleson} in this case.

Finally, assume that $\mi$ is positive and that $p\Meg 1$. By means of Lemma~\ref{lem:8} and~\cite[Proposition 2.41]{CalziPeloso}, we may also assume that $B^{\vect{s'}}_{(\zeta,z)}\in A^{2,2}_{\vect{s''}}(D)$ for every $(\zeta,z)\in D$. Take $\vect{s_2}$ such that properties $\atomics^{2,2}_{\vect s,\vect{s_2}}$ and $\atomics^{2,2}_{\vect {s''},\vect{s_2}}$ hold (cf.~\cite[Crollary 5.11 and 5.16]{CalziPeloso}). Then, there is a  $(\delta,R)$-lattice $(\zeta_{j,k},z_{j,k})_{j\in J,k\in K}$ for some $\delta>0$ and some $R>1$, such that, if we define $A_1$ and $A_2$ as in the proof of the implication (2) $\implies$ (1), then both $A_1$ and $A_2$ are continuous (and onto). Define, in addition, $h_k\coloneqq \rho(\zeta_{j,k},z_{j,k})$ and $B_{j,k}\coloneqq B((\zeta_{j,k},z_{j,k}),R\delta)$ for every $(j,k)\in J\times K$.
Observe that, by~\cite[Theorem 2.47]{CalziPeloso}, there is a constant $C_6>0$ such that
\[
\frac{1}{C_6} \abs*{B^{\vect{s_2}}_{(\zeta,z)}(\zeta',z')}\meg \abs*{B^{\vect{s_2}}_{(\zeta,z)}(\zeta'',z'')}\meg C_6 \abs*{B^{\vect{s_2}}_{(\zeta,z)}(\zeta',z')}
\]
for every $(\zeta,z),(\zeta',z'),(\zeta'',z'')\in D$ such that $d((\zeta',z'),(\zeta'',z''))<R\delta$.

Then,
\[
\begin{split}
\sum_{j,k} \left( \Delta^{\vect b+\vect d- \vect{s}-\vect{s''}}_\Omega(h_k)\mi(B_{j,k})  \right)^p
&\meg C_6^{2 p} \sum_{j,k} \left( \Delta^{\vect b+\vect d- \vect s-\vect{s''}-2\vect{s_2}}_\Omega(h_k) \int_{B_{j,k}} \abs{B^{\vect{s_2}}_{(\zeta_{j,k},z_{j,k})}}^2\,\dd \mi  \right)^p\\
&\meg C_6^{2 p} \sum_{j,k} \left( \Delta^{\vect b+\vect d-\vect{s}-\vect{s''}-2\vect{s_2}}_\Omega(h_k) \langle B^{\vect{s_2}}_{(\zeta_{j,k},z_{j,k})}\bigg\vert B^{\vect{s_2}}_{(\zeta_{j,k},z_{j,k})}\rangle_{L^2(\mi)}  \right) ^p\\
&=c_{\vect{s'}}^p C_6^{2 p} \sum_{j,k}\abs*{ \langle T_{\mi,\vect{s'}}A_1 e_{j,k} \vert A_3 A_2 e_{j,k}\rangle_{L^{2,2}_{\vect b+\vect d-\vect {s'}-\vect{s''}}(D)}   }^p
\end{split}
\]
thanks to Proposition~\ref{prop:11}, where 
\[
A_3\colon A^{2,2}_{\vect{s''}}(D)\ni f \mapsto f (\Delta^{2 \vect {s''}+\vect{s'}-(\vect b+\vect d) }_\Omega\circ \rho)\in L^{2,2}_{\vect b+\vect d-\vect {s'}-\vect{s''}}(D)
\]
is a continuous linear mapping. Therefore, Propositions~\ref{prop:15} and~\ref{prop:22} imply that $\sum_{j,k} \left( \Delta^{\vect b+\vect d- \vect{s}-\vect{s''}}_\Omega(h_k)\mi(B_{j,k})  \right)^p$ is finite, so that the conclusion follows from~\cite[Lemma 5.1]{CPCarleson}.
\end{proof}

\section{Cesàro-type Operators}\label{sec:Cesaro}

In this section we study Cesàro-type operators, following (and extending) the definition given in~\cite{NanaSehba}. For these operators, continuity and compactness results basically follow from the corresponding results for Carleson measures (and this is rigorously so when the target space is of the form $A^{p,p}_\vs(D)$ with $p\in ]0,\infty[$). Using the relationship between Cesàro-type and Toeplitz operators, we then characterize the Cesàro-type operators which belong to some Schatten class $\Lin^p(A^{2,2}_{\vs_1}, \breve A^{2,2}_{\vs_2,\vs'}(D))$ (cf.~Definition~\ref{def:1} below).

\begin{deff}\label{def:1}
Take $\vect s\in \R^r$, $\vect{s'}\in \N_{\Omega'}$, and $p,q\in ]0,\infty]$. Define $\breve A^{p,q}_{\vect s,\vect{s'}}(D)$ as the Hausdorff space associated with the space of $f\in \Hol(D)$ such that $f*I^{-\vect{s'}}_\Omega\in A^{p,q}_{\vect s+\vect{s'}}(D)$, endowed with the induced topology. Define $\breve A^{p,q}_{\vect s,\vect{s'},0}(D)$ analogously. 
\end{deff}

Observe that the mapping $f\mapsto f*I^{-\vect{s'}}_\Omega$ induce isomorphisms of $\breve A^{p,q}_{\vect s,\vect{s'}}(D)$ and $\breve A^{p,q}_{\vect s,\vect{s'},0}(D)$ onto $A^{p,q}_{\vect s+\vect{s'}}(D)$ and $A^{p,q}_{\vect s+\vect{s'},0}(D)$, respectively (argue as in the proof of~\cite[Proposition 3.17]{CalziPeloso}). Therefore, $\breve A^{p,q}_{\vect s,\vect{s'},0}(D)\neq 0$ (resp.\ $\breve A^{p,q}_{\vect s,\vect{s'}}(D)\neq 0$) if and only if $\vect s+\vect{s'}\in \frac{1}{2 q}\vect m+(\R_+^*)^r$ (resp.\ $\vect s+\vect{s'}\in \R_+^r$ if $q=\infty$). Observe that $\breve A^{p,q}_{\vect s,\vect{s'}}(D)$ is canonically isomorphic to $\widehat A^{p,q}_{\vect s,\vect{s'}}(D)$ if $\vect{s'}$ is sufficiently large.

We observe explicitly that both $\breve A^{p,q}_{\vect
  s,\vect{s'}}(D)$ and $\widehat A^{p,q}_{\vect s,\vect{s'}}(D)$ can
be considered as possible generalization of the classical (weighted)
holomorphic Besov spaces (notice that almost no difference arises when
$r=1$, that is, when $D$ is a Siegel upper half-space or,
equivalently, biholomorphic to the unit
ball). Cf.~\cite{Zhu4,Zhu5,Zhu6} for more information of holomorphic
Besov spaces on \emph{bounded} symmetric domains.

For every $\vect{s'}\in\N_{\Omega'}$ and for every $g\in \Hol(D)$, define a mapping $\Cc_{g,\vect{s'}}\colon \Hol(D)\to \Hol(D)/\ker(\,\cdot\,*I^{-\vect{s'}}_\Omega)$ by
\[
\Cc_{g,\vect{s'}}(f)*I^{-\vect{s'}}_\Omega\coloneqq f (g*I^{-\vect{s'}}_\Omega)
\]
for every $f\in \Hol(D)$.
Observe that saying that $\Cc_{g,\vs'}$ maps $A^{p_1,q_1}_{\vs_1}(D)$ into $\breve A^{p,p}_{\vs_2}(D)$ continuously (resp.\ compactly) is equivalent to saying that $(g*I^{-\vs'}_\Omega)(\Delta^{p\vs_2-(\vb+\vd)}_\Omega\circ \rho)\cdot \nu_D$ is a $p$-Carleson measure (resp.\ a vanishing (or compact) $p$-Carleson measure) for $A^{p_1,q_1}_{\vs_1}(D)$. In the particular case $p_2=q_2=p$, then, the following Propositions~\ref{prop:5},~\ref{prop:5bis}, and~\ref{prop:9} follow from~\cite[Proposition 5.2 and 5.3, and Theorem  5.5]{CPCarleson}. Even though the proofs in the general case are analogous, we repeat them for the sake of completeness.

\begin{prop}\label{prop:4}\label{prop:5}
Take $p_1,p_2,q_1,q_2\in ]0,\infty]$, $\vect {s_1},\vect{s_2}\in \R^r$ and $\vect{s'}\in \N_{\Omega'}$. Define $p_3\coloneqq \big(\frac{1}{p_2}-\frac{1}{p_1}\big)_+^{-1}$, $q_3\coloneqq \big(\frac{1}{q_2}-\frac{1}{q_1}\big)_+^{-1}$, and $\vect{s_3}\coloneqq \vect{s_2}-\vect{s_1}+\big(\frac{1}{p_1}-\frac{1}{p_2}\big)_+(\vect b+\vect d) $. 
Take $g\in \breve A^{p_3,q_3}_{\vect{s_3},\vect{s'}}(D)$ (resp.\ $g\in \breve A^{p_3,q_3}_{\vect{s_3},\vect{s'},0}(D)$).
Then, $\Cc_{g,\vect{s'}}$ induces  continuous (resp.\ compact) linear mappings $A^{p_1,q_1}_{\vect {s_1},0}(D)\to \breve A^{p_2,q_2}_{\vect{s_2},\vect{s'},0}(D)$  and $A^{p_1,q_1}_{\vect {s_1}}(D)\to \breve A^{p_2,q_2}_{\vect{s_2},\vect{s'}}(D)$.
\end{prop}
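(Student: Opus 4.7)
The strategy is to reduce $\Cc_{g, \vect{s'}}$ to a multiplication operator via the isomorphism $J \colon \breve A^{p,q}_{\vect s, \vect{s'}}(D) \to A^{p,q}_{\vect s + \vect{s'}}(D)$ given by $f \mapsto f * I^{-\vect{s'}}_\Omega$, and then invoke Hölder's inequality together with the embedding results for mixed-norm weighted Bergman spaces.

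Set $h \coloneqq g * I^{-\vect{s'}}_\Omega$, which lies in $A^{p_3, q_3}_{\vect{s_3} + \vect{s'}}(D)$ (resp.\ in $A^{p_3, q_3}_{\vect{s_3} + \vect{s'}, 0}(D)$) by the isomorphism property. The defining relation $\Cc_{g, \vect{s'}}(f) * I^{-\vect{s'}}_\Omega = f h$ implies that $J\circ \Cc_{g,\vect{s'}}$ coincides with the multiplication operator $M_h \colon f \mapsto f h$. Hence continuity (resp.\ compactness) of $\Cc_{g, \vect{s'}}$ from $A^{p_1, q_1}_{\vect{s_1}}(D)$ to $\breve A^{p_2, q_2}_{\vect{s_2}, \vect{s'}}(D)$ is equivalent to continuity (resp.\ compactness) of $M_h \colon A^{p_1, q_1}_{\vect{s_1}}(D) \to A^{p_2, q_2}_{\vect{s_2} + \vect{s'}}(D)$, and similarly for the $0$-spaces.

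For continuity, one applies Hölder's inequality twice—first on each fibre $\Nc$ via $1/p_2 = 1/p_1 + 1/p_3$, and then on $\Omega$ via $1/q_2 = 1/q_1 + 1/q_3$—to obtain, when $p_1 \geq p_2$ and $q_1 \geq q_2$,
\[
\|fh\|_{A^{p_2, q_2}_{\vect{s_1} + \vect{s_3} + \vect{s'}}(D)} \leq \|f\|_{A^{p_1, q_1}_{\vect{s_1}}(D)} \|h\|_{A^{p_3, q_3}_{\vect{s_3} + \vect{s'}}(D)}.
\]
In this regime $\vect{s_3} = \vect{s_2} - \vect{s_1}$, so the left-hand norm is that of $A^{p_2, q_2}_{\vect{s_2} + \vect{s'}}(D)$. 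When $p_1 < p_2$ or $q_1 < q_2$, the corresponding exponent $p_3$ or $q_3$ equals $\infty$, and one first applies the standard continuous embedding between mixed-norm weighted Bergman spaces (cf.~\cite[Proposition~3.2]{CalziPeloso}) that incurs precisely the weight shift $(1/p_1 - 1/p_2)_+(\vect b + \vect d)$ built into $\vect{s_3}$, before applying Hölder.

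For the compactness assertion, the hypothesis $h \in A^{p_3, q_3}_{\vect{s_3} + \vect{s'}, 0}(D)$ allows us to approximate $h$ in norm by a sequence $(h_n)$ in $\Ec(\Sc_{\Omega, L}(\Nc))$ (using Proposition~\ref{prop:23bis}). Each $M_{h_n}$ is compact: a bounded sequence in $A^{p_1, q_1}_{\vect{s_1}}(D)$ has a locally uniformly convergent subsequence, since $A^{p_1, q_1}_{\vect{s_1}}(D) \hookrightarrow \Hol(D)$ and $\Hol(D)$ is Fréchet–Montel, and the rapid decay of $h_n$ at the boundary of $D$ and at infinity converts this local uniform convergence into convergence in $A^{p_2, q_2}_{\vect{s_2} + \vect{s'}}(D)$. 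Since $M_{h_n} \to M_h$ in operator norm by the continuity estimate above, $M_h$ is compact as a norm-limit of compact operators. The main technical point is the case-by-case verification that the weight shift $(1/p_1 - 1/p_2)_+(\vect b + \vect d)$ in $\vect{s_3}$ correctly balances both the Hölder step and the $L^p$-embedding step in every combination of the index comparisons $p_1 \gtreqless p_2$ and $q_1 \gtreqless q_2$, including the boundary cases where some exponent equals $\infty$.
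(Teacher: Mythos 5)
Your reduction to the multiplication operator $M_h$, $h=g*I^{-\vect{s'}}_\Omega$, and your continuity argument are essentially sound, but they follow a genuinely different route from the paper. The paper does not work fibre-by-fibre: it discretizes via a $(\delta,R)$-lattice and the sampling theorems of~\cite{CalziPeloso} (Theorems 3.22 and 3.23), obtaining operators $S_\ell$ with $\norm{S_\ell f}_{\ell^{p_\ell,q_\ell}(J,K)}\approx\norm{f}_{A^{p_\ell,q_\ell}_{\vect{s_\ell}}(D)}$, and then applies H\"older \emph{at the sequence level}, where the inclusions $\ell^{p}\subseteq\ell^{p'}$ for $p\meg p'$ are trivial, hold with constant $1$, and require no weight shift; the shift $(1/p_1-1/p_2)_+(\vb+\vd)$ is absorbed into the normalization $\Delta_\Omega^{\vect{s_\ell}-(\vb+\vd)/p_\ell}$ of $S_\ell$. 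Your version instead uses H\"older on $\Nc$ and on $\Omega$ plus the function-space embeddings of~\cite[Proposition 3.2]{CalziPeloso}; this is workable, but it forces the case-by-case analysis you acknowledge at the end, and it does not address the claim that $A^{p_1,q_1}_{\vect{s_1},0}(D)$ is mapped into the \emph{little} space $\breve A^{p_2,q_2}_{\vect{s_2},\vect{s'},0}(D)$ (in the paper this is immediate from $S_\ell f\in\ell^{p_\ell,q_\ell}_0$ iff $f\in A^{p_\ell,q_\ell}_{\vect{s_\ell},0}$ and $\ell^{p_1,q_1}_0\cdot\ell^{p_3,q_3}\subseteq\ell^{p_2,q_2}_0$).

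The compactness argument has a genuine gap. You approximate $h\in A^{p_3,q_3}_{\vect{s_3}+\vect{s'},0}(D)$ in norm by elements of $\Ec(\Sc_{\Omega,L}(\Nc))$, but Proposition~\ref{prop:23bis} does not give this: it yields the continuous inclusions $\Ec(\Sc_{\Omega,L}(\Nc))\subseteq A^{p,q}_{\vs,0}(D)\subseteq\widetilde A^{p,q}_{\vs,0}(D)$ only when $\vs$ lies in a specific translated cone, and density of $\Ec(\Sc_{\Omega,L}(\Nc))$ in $A^{p,q}_{\vs,0}(D)$ \emph{for its own topology} would further require $A^{p,q}_{\vs,0}(D)=\widetilde A^{p,q}_{\vs,0}(D)$, i.e.\ the extra hypotheses of part (3). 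The present proposition assumes none of this: $\vect{s_1},\vect{s_2}$ are arbitrary in $\R^r$, so $\vect{s_3}+\vect{s'}$ need not satisfy any of those conditions. Moreover, the compactness of each $M_{h_n}$ is asserted rather than proved; the passage from locally uniform convergence of a bounded sequence $(f_j)$ to norm convergence of $(f_jh_n)$ requires a uniform tail estimate $\norm{\chi_{D\setminus K}h_n}_{A^{p_3,q_3}_{\vect{s_3}+\vect{s'}}}\to 0$ that you do not supply. The paper sidesteps both difficulties by transporting everything to $\ell^{p,q}(J,K)$: the symbol becomes a sequence $\lambda_{g,\vect{s'}}=S_3(g*I^{-\vect{s'}}_\Omega)\in\ell^{p_3,q_3}_0(J,K)$, and multiplication by it is the norm limit of the finite-rank multiplications by $\chi_{Y_\beta}\lambda_{g,\vect{s'}}$. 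A repair within your framework is possible (e.g.\ prove compactness of $M_h$ directly by splitting $D$ into a compact piece, where $\sup_K\abs{f_j}\to0$, and a complement on which $\norm{\chi_{D\setminus K}h}_{L^{p_3,q_3}_{\vect{s_3}+\vect{s'}}}$ is small because $h\in L^{p_3,q_3}_{\vect{s_3}+\vect{s'},0}(D)$), but as written the approximation step does not go through.
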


This extends one implication of~\cite[Corollary 3.9 and Theorem 3.11]{NanaSehba}, where the case in which $p_1=q_1\in [1,\infty[$, $p_2=q_2\in [1,\infty]$, $\vect{s_1}=\vect{s_2}\in \R\vect 1_r$, and $D$ in an irreducible symmetric tube domain, is considered. 

\begin{proof}
\textsc{Step I.} Assume that $g\in \breve A^{p_3,q_3}_{\vect{s_3}}(D)$. By~\cite[Theorem 3.23]{CalziPeloso}, given a $(\delta,R)$-lattice  $(\zeta_{j,k},z_{j,k})_{j\in J,k\in K}$ on $D$ for some $\delta>0$ and $R>1$ (cf.~\cite[Lemma 2.55]{CalziPeloso}), we may find a constant $C_1>0$ such that the operators $S_1,S_2,S_3\colon \Hol(D)\to \C^{J\times K}$ defined by
\[
(S_\ell f)_{j,k}\coloneqq \Delta_\Omega^{\vect{s_\ell}-(\vect b+\vect d)/p_\ell}(\rho(\zeta_{j,k},z_{j,k})) \max_{\overline B((\zeta_{j,k},z_{j,k}),R\delta)} \abs{f},
\]
for every $f\in \Hol(D)$ and for $\ell=1,2,3$, satisfy
\[
\frac{1}{C_1}\norm{S_\ell f}_{\ell^{p_\ell,q_\ell}(J,K)}\meg  \norm{f}_{A^{p_\ell,q_\ell}_{\vect{s_\ell}}(D)}\meg C_1\norm{S_\ell f}_{\ell^{p_\ell,q_\ell}(J,K)}
\]
for every $f\in \Hol(D)$ and for $\ell=1,2,3$. We may also assume that $f\in A^{p_\ell,q_\ell}_{\vect{s_\ell},0}(D)$ if and only if $S_\ell f\in \ell^{p_\ell,q_\ell}_0(J,K)$. 
Then,
\[
\begin{split}
\norm{\Cc_{g,\vect{s'}}(f)}_{A^{p_2,q_2}_{\vect{s_2},\vect{s'}}(D)}&\meg C_1 \norm{S_2(f (g*I^{-\vect{s'}}_\Omega))}_{\ell^{p_2,q_2}(J,K)}\\
&\meg C_1 \norm{S_1(f) S_3(g*I^{-\vect{s'}}_\Omega)}_{\ell^{p_2,q_2}(J,K)}\\
&\meg C_1 \norm{S_1(f)}_{\ell^{p_1,q_1}(J,K)} \norm{S_3(g*I^{-\vect{s'}}_\Omega)}_{\ell^{p_3,q_3}(J,K)}\\
&\meg C_1^3 \norm{f}_{A^{p_1,q_1}_{\vect{s_1}}(D)}\norm{g}_{\breve A^{p_3,q_3}_{\vect{s_3}}(D)}
\end{split}
\]
for every $f\in \Hol(D)$. The assertion follows in this case.

\textsc{Step II.} Assume that $g\in \breve A^{p_3,q_3}_{\vect{s_3},\vect{s'},0}(D)$. By~\cite[Theorem 3.22]{CalziPeloso}, we may find a $(\delta,R)$-lattice  $(\zeta_{j,k},z_{j,k})_{j\in J,k\in K}$ on $D$ for some $\delta>0$ and $R>1$, and a constant $C_2>0$ such that the operators $S_1,S_2,S_3\colon \Hol(D)\to \C^{J\times K}$ defined by
\[
(S_\ell f)_{j,k}\coloneqq \Delta_\Omega^{\vect{s_\ell}-(\vect b+\vect d)/p_\ell}(\rho(\zeta_{j,k},z_{j,k})) f(\zeta_{j,k},z_{j,k}),
\]
for every $f\in \Hol(D)$ and for $\ell=1,2,3$, satisfy
\[
\frac{1}{C_2}\norm{S_\ell f}_{\ell^{p_\ell,q_\ell}(J,K)}\meg  \norm{f}_{A^{p_\ell,q_\ell}_{\vect{s_\ell}}(D)}\meg C_2\norm{S_\ell f}_{\ell^{p_\ell,q_\ell}(J,K)}
\]
for every $f\in A^{p_\ell,q_\ell}_{\vect{s_\ell}}(D)$ and for $\ell=1,2,3$. Define $X_{\ell,0}\coloneqq S_\ell(A^{p_\ell,q_\ell}_{\vect{s_\ell},0}(D))$ and $X_\ell\coloneqq S_\ell(A^{p_\ell,q_\ell}_{\vect{s_\ell}}(D))$ for $\ell=1,2$, and define $\lambda_{g,\vect{s'}}\coloneqq S_3(g*I^{-\vect{s'}}_\Omega)$.
By \textsc{step I}, it will suffice to prove that the continuous linear mappings
\[
X_{1,0}\ni \lambda \mapsto \lambda \lambda_{g,\vect{s'}}\in X_{2,0} \qquad \text{and} \qquad X_1\ni \lambda \mapsto \lambda \lambda_{g,\vect{s'}}\in X_2
\]
are compact, where the product is defined componentwise. Since $\lambda_{g,\vect{s'}}\in \ell^{p_3,q_3}_0(J,K)$, and since $X_\ell$ has the topology induced by $\ell^{p_\ell,q_\ell}(J,K)$ for $\ell=1,2$, the assertion follows easily.\footnote{Indeed, if $(Y_\beta)_{\beta\in \N}$ is an increasing sequence of finite subsets of $J\times K$ whose union covers $J\times K$, then the operator of multiplication by $\lambda_{g,\vect{s'}}$ is the limit of the operators of multiplication by $\chi_{Y_\beta}\lambda_{g,\vect{s'}}$, which have finite rank and are therefore compact, in $\Lin(X_1;X_2)$. }
\end{proof}

\begin{prop}\label{prop:5bis}
Take $p_1,q_1,q_2\in ]0,\infty]$, $\vect {s_1},\vect{s_2}\in \R^r$ and $\vect{s'}\in \N_{\Omega'}$. Assume   that $\vect{s_1}\in \frac{1}{2 q_1}\vect m+(\R_+^*)^r$ (resp.\ $\vect{s_1}\in \R_+^r$ if $q_1=\infty$). Define $\vs_3\coloneqq \vs_2-\vs_1+\big(\frac{1}{p_1}-\frac{1}{p_2})(\vb+\vd)$ and
take $g\in \Hol(D)$ so that $\Cc_{g,\vect{s'}}$ induces a  continuous linear mapping $A^{p_1,q_1}_{\vect {s_1},0}(D)\to \breve A^{q_2,q_2}_{\vect{s_2},\vect{s'},0}(D)$ (resp.\ $A^{p_1,q_1}_{\vect {s_1}}(D)\to \breve A^{q_2,q_2}_{\vect{s_2},\vect{s'}}(D)$).Then, $g\in \breve A^{\infty,\infty}_{\vs_3,\vect{s'}}(D)$. 

If, in addition, $\vs\in (\R_+^*)^r$ when $p_1=q_1=\infty$ and $\Cc_{g,\vect{s'}}$ induces a  compact linear mapping $A^{p_1,q_1}_{\vect {s_1},0}(D)\to \breve A^{q_2,q_2}_{\vect{s_2},\vect{s'},0}(D)$ (resp.\ $A^{p_1,q_1}_{\vect {s_1}}(D)\to \breve A^{q_2,q_2}_{\vect{s_2},\vect{s'}}(D)$), then $g\in \breve A^{\infty,\infty}_{\vs_3,\vect{s'}}(D)$. 
\end{prop}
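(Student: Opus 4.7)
The plan is to extract the required pointwise bound on $G\coloneqq g*I^{-\vect{s'}}_\Omega$ by testing the continuity inequality on normalized Bergman reproducing kernels and then invoking the standard weighted $L^\infty$ embedding of weighted Bergman spaces. First observe that, by the definition of $\Cc_{g,\vect{s'}}$, the hypothesis is equivalent to the multiplication operator $M_G\colon f\mapsto f G$ being continuous (resp.\ compact) from $A^{p_1,q_1}_{\vect{s_1},0}(D)$ into $A^{q_2,q_2}_{\vect{s_2}+\vect{s'},0}(D)$, and from $A^{p_1,q_1}_{\vect{s_1}}(D)$ into $A^{q_2,q_2}_{\vect{s_2}+\vect{s'}}(D)$; in parallel, the claim $g\in \breve A^{\infty,\infty}_{\vect{s_3},\vect{s'}}(D)$ amounts to $G\in A^{\infty,\infty}_{\vect{s_3}+\vect{s'}}(D)$, i.e.\ to the boundedness of $\Delta^{\vect{s_3}+\vect{s'}}_\Omega(\rho(\zeta,z))\abs{G(\zeta,z)}$ on $D$.

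Fix an auxiliary $\vect{s''}\in \R^r$, to be chosen sufficiently negative so that, for every $(\zeta_0,z_0)\in D$, one has $B^{\vect{s''}}_{(\zeta_0,z_0)}\in A^{p_1,q_1}_{\vect{s_1},0}(D)$ together with the norm asymptotic
\[
\norm{B^{\vect{s''}}_{(\zeta_0,z_0)}}_{A^{p_1,q_1}_{\vect{s_1}}(D)}\approx \Delta^{\gamma}_\Omega(\rho(\zeta_0,z_0))
\]
for an explicit $\gamma\in\R^r$ depending on $\vect{s_1},p_1,q_1,\vect{s''}$ (cf.~\cite[Proposition 2.41]{CalziPeloso}). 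Set $b^{\vect{s''}}_{(\zeta_0,z_0)}\coloneqq \Delta^{-\gamma}_\Omega(\rho(\zeta_0,z_0))\, B^{\vect{s''}}_{(\zeta_0,z_0)}$, so that $(b^{\vect{s''}}_{(\zeta_0,z_0)})$ is uniformly bounded in $A^{p_1,q_1}_{\vect{s_1},0}(D)$. Continuity of $M_G$ then gives
\[
\norm{b^{\vect{s''}}_{(\zeta_0,z_0)}\,G}_{A^{q_2,q_2}_{\vect{s_2}+\vect{s'}}(D)}\meg C\norm{M_G}.
\]
Combining with the continuous embedding $A^{q_2,q_2}_{\vect{s_2}+\vect{s'}}(D)\hookrightarrow A^{\infty,\infty}_{\vect{s_2}+\vect{s'}-(\vect b+\vect d)/q_2}(D)$ (cf.~\cite[Proposition 3.2]{CalziPeloso}) and evaluating at $(\zeta_0,z_0)$, one obtains
\[
\abs{b^{\vect{s''}}_{(\zeta_0,z_0)}(\zeta_0,z_0)}\abs{G(\zeta_0,z_0)}\meg C'\,\Delta^{(\vect b+\vect d)/q_2-\vect{s_2}-\vect{s'}}_\Omega(\rho(\zeta_0,z_0)).
\]
Substituting $B^{\vect{s''}}_{(\zeta_0,z_0)}(\zeta_0,z_0)=\Delta^{\vect{s''}}_\Omega(\rho(\zeta_0,z_0))$ and rearranging yields $\abs{G(\zeta_0,z_0)}\meg C''\,\Delta^{(\vect b+\vect d)/q_2-\vect{s_2}-\vect{s'}-\vect{s''}+\gamma}_\Omega(\rho(\zeta_0,z_0))$; the explicit formula $\gamma-\vect{s''}=\vect{s_1}-(\vect b+\vect d)/p_1$ from~\cite[Proposition 2.41]{CalziPeloso} then collapses the exponent to $-\vect{s_3}-\vect{s'}$, giving the sought pointwise bound.

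For the compact case, the extra assumption (which guarantees that $A^{p_1,q_1}_{\vect{s_1},0}(D)$ contains the normalized kernels even in the degenerate case $p_1=q_1=\infty$) lets us proceed similarly, provided we show that $b^{\vect{s''}}_{(\zeta_0,z_0)}\to 0$ in the weak topology $\sigma(A^{p_1,q_1}_{\vect{s_1},0}(D),A^{p_1,q_1}_{\vect{s_1},0}(D)')$ as $(\zeta_0,z_0)\to \infty$ in $D$. This is an analogue of step (iii) of the proof of Proposition~\ref{prop:11}: identifying the dual via~\cite[Proposition 5.12]{CalziPeloso}, the pairing $\langle b^{\vect{s''}}_{(\zeta_0,z_0)}\vert\,\cdot\,\rangle$ reduces to a multiple of the pointwise evaluation at $(\zeta_0,z_0)$ of an element of a `$0$' weighted Bergman space, which accordingly vanishes at infinity. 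Compactness of $M_G$ then upgrades this weak convergence to norm convergence $\norm{b^{\vect{s''}}_{(\zeta_0,z_0)}\,G}_{A^{q_2,q_2}_{\vect{s_2}+\vect{s'},0}(D)}\to 0$, and the same exponent computation as above yields $\Delta^{\vect{s_3}+\vect{s'}}_\Omega(\rho(\zeta_0,z_0))\abs{G(\zeta_0,z_0)}\to 0$, i.e.\ $g\in \breve A^{\infty,\infty}_{\vect{s_3},\vect{s'},0}(D)$.

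The main obstacle is a careful bookkeeping of exponents: one has to ensure that $\vect{s''}$ can be chosen simultaneously in the admissibility range of~\cite[Proposition 2.41]{CalziPeloso} and in the non-triviality range for $B^{\vect{s''}}_{(\zeta_0,z_0)}\in A^{p_1,q_1}_{\vect{s_1},0}(D)$, and that the final computation indeed produces exactly $-\vect{s_3}-\vect{s'}$. A secondary delicate point is the weak-convergence-to-zero step in the compact case at the extremal parameter $p_1=q_1=\infty$, which is precisely where the hypothesis $\vect{s}\in (\R_+^*)^r$ is used.
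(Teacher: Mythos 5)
Your proposal is correct and follows essentially the same route as the paper's proof: both test the continuity (resp.\ compactness) hypothesis on the normalized kernels $B^{\vect{s''}}_{(\zeta,z)}$, use the norm identity of \cite[Proposition 2.41]{CalziPeloso} to normalize, and then extract the pointwise bound on $g*I^{-\vect{s'}}_\Omega$ with the exponent collapsing to $-\vect{s_3}-\vect{s'}$. The only cosmetic difference is that the paper recovers the pointwise value via a lower bound for $\abs{B^{\vect{s''}}_{(\zeta,z)}}$ on a Bergman ball combined with a local sup estimate (a modification of \cite[Lemma 3.24]{CalziPeloso}), whereas you evaluate at the centre through the global embedding $A^{q_2,q_2}_{\vect{s_2}+\vect{s'}}(D)\hookrightarrow A^{\infty,\infty}_{\vect{s_2}+\vect{s'}-(\vect b+\vect d)/q_2}(D)$; these are interchangeable here.
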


\begin{proof}
Take $\vect{s_4}\in \frac{1}{p_1}(\vect b+\vect d)-\frac{1}{2 p_1} \vect{m'}-(\R_+^*)^r$ such that $\vect{s_1}+\vect{s_4}\in\frac{1}{p_1}(\vect b+\vect d)-\frac{1}{2 q_1} \vect{m'}-(\R_+^*)^r$, so that~\cite[Proposition 2.41]{CalziPeloso} shows that there is a constant $C_1>0$ such that
\[
\norm{B^{\vect{s_4}}_{(\zeta,z)}}_{A^{p_1,q_1}_{\vect{s_1}}(D)}=C_1 \Delta^{\vect{s_1}+\vect{s_4}-(\vect b+\vect d)/p_1}_\Omega(\rho(\zeta,z))
\]
for every $(\zeta,z)\in D$. In addition, by~\cite[Theorem 2.47]{CalziPeloso}, there is a constant $C_2>0$ such that 
\[
\abs{B^{\vect {s_4}}_{(\zeta,z)}(\zeta',z')}\Meg C_2 \Delta^{\vect {s_4}}_\Omega(\rho(\zeta,z))
\]
for every $(\zeta,z),(\zeta',z')\in D$ such that $d((\zeta,z),(\zeta',z'))\meg 1$.
Therefore, there is a constant $C_3>0$ such that
\[
\begin{split}
C_3\Delta^{\vect{s_1}+\vect{s_4}-(\vect b+\vect d)/p_1}_\Omega(\rho(\zeta,z))&\Meg \norm{\Cc_{g,\vect{s'}}(B^{\vect{s_4}}_{(\zeta,z)})}_{\breve A^{p_2,q_2}_{\vect{s_2}(D)}}\\
&=\norm{ B^{\vect{s_4}}_{(\zeta,z)} (g*I^{-\vect{s'}}_\Omega) }_{A^{p_2,q_2}_{\vect{s_2}(D)}}\\
&\Meg C_2\Delta^{\vect {s_4}}_\Omega(\rho(\zeta,z)) \norm{\chi_{B((\zeta,z),1)}(g*I^{-\vect{s'}}_\Omega)}_{L^{p_2,q_2}_{\vect{s_2}(D)}}
\end{split}
\]
for every $(\zeta,z)\in D$. Now, by a simple modification of~\cite[Lemma 3.24]{CalziPeloso}, we may find two constants $\rho,C_4>0$, with $\rho\meg 1$, such that
\[
\Delta_\Omega^{\vect{s_2}-(\vect b+\vect d)/p_2}(\rho(\zeta,z))\abs{h(\zeta,z)}\meg C_4 \norm{\chi_{B((\zeta,z),\rho)} h}_{L^{p_2,q_2}_{\vect{s_2}(D)}}
\]
for every $h\in \Hol(D)$. Therefore,
\[
\Delta_\Omega^{\vect{s_3}}(\rho(\zeta,z))\abs{(g*I^{-\vect{s'}}_\Omega)(\zeta,z)}\meg \frac{C_3 C_4}{C_2}
\]
for every $(\zeta,z)\in D$, so that $g\in \breve A^{\infty,\infty}_{\vect{s_3},\vect{s'}}(D)$.

Now, assume that  $\Cc_{g,\vect{s'}}$ induces a  compact linear mapping $A^{p_1,q_1}_{\vect {s_1},0}(D)\to \breve A^{p_2,q_2}_{\vect{s_2},\vect{s'}}(D)$. Arguing as in the proof of~\cite[Proposition 5.3]{CPCarleson}, we may find $\vect{s_5}\in \R^r$ such that
\[
\lim_{(\zeta,z)\to \infty}\Delta_\Omega^{(\vect b+\vect d)/p_1-\vect{s_1}-\vect{s_5}}(\rho(\zeta,z))\norm{ \Cc_{g,\vect{s'}} B^{\vect{s_5}}_{(\zeta,z)}}_{\breve A^{p_2,q_2}_{\vect{s_2}}(D)}=0.
\]
Then, by means of the preceding estimates, it is readily seen that  $g\in \breve A^{\infty,\infty}_{\vect{s_3},\vect{s'},0}(D)$.
\end{proof}

\begin{prop}\label{prop:9}
Take $p_1,q_1,q_2\in ]0,\infty]$, $\vect {s_1},\vect{s_2}\in \R^r$ and $\vect{s'}\in \N_{\Omega'}$. Assume   that $\vect{s_1}\in \frac{1}{2 q_1}\vect m+(\R_+^*)^r$ (resp.\ $\vect{s_1}\in \R_+^r$ if $q_1=\infty$) and that property $\atomic^{p_1,q_1}_{\vect s,\vect{s''},0}$ holds for some $\vect{s''}\in \R^r$.
Define $p_3,q_3$, and  $\vect{s_3}$ as in Proposition~\ref{prop:4} (with $p_2\coloneqq q_2$), and take $g\in \Hol(D)$ so that $\Cc_{g,\vect{s'}}$ induces a  continuous linear mapping $A^{p_1,q_1}_{\vect {s_1},0}(D)\to \breve A^{q_2,q_2}_{\vect{s_2},\vect{s'},0}(D)$ (resp.\ $A^{p_1,q_1}_{\vect {s_1}}(D)\to \breve A^{q_2,q_2}_{\vect{s_2},\vect{s'}}(D)$).Then, $g\in \breve A^{p_3,q_3}_{\vect{s_3},\vect{s'}}(D)$. 
\end{prop}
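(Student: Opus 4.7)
The plan is to combine the atomic synthesis provided by property $\atomic^{p_1,q_1}_{\vs_1,\vect{s''},0}$ with a Khintchine randomization and an $\ell^{p,q}$--Hölder duality, mirroring the necessary--condition proofs for Carleson measures referenced in~\cite{CPCarleson}.

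First I would translate the hypothesis on $\Cc_{g,\vect{s'}}$: by Definition~\ref{def:1}, the continuity of $\Cc_{g,\vect{s'}}\colon A^{p_1,q_1}_{\vs_1,0}(D)\to \breve A^{q_2,q_2}_{\vect{s_2},\vect{s'},0}(D)$ is the same as the continuity of the pointwise multiplication $M\colon f\mapsto f\cdot (g*I^{-\vect{s'}}_\Omega)$ from $A^{p_1,q_1}_{\vs_1,0}(D)$ into $A^{q_2,q_2}_{\vect{s_2}+\vect{s'},0}(D)$. Rewriting the $A^{q_2,q_2}$--norm as a Bergman integral gives
\[
\int_D \abs{Mf}^{q_2}\Delta_\Omega^{q_2(\vect{s_2}+\vect{s'})-\vb-\vd}(\rho)\,\dd\nu_D \meg C\, \norm{f}_{A^{p_1,q_1}_{\vs_1}}^{q_2}.
\]
Next, using property $\atomic^{p_1,q_1}_{\vs_1,\vect{s''},0}$ I fix a $(\delta,4)$-lattice $(\zeta_{j,k},z_{j,k})_{j,k}$ and note that the synthesis operator $\Psi\colon \lambda\mapsto \sum_{j,k}\lambda_{j,k}B^{\vect{s''}}_{(\zeta_{j,k},z_{j,k})}\Delta_\Omega^{(\vb+\vd)/p_1-\vs_1-\vect{s''}}(h_k)$ is continuous from $\ell^{p_1,q_1}_0(J,K)$ into $A^{p_1,q_1}_{\vs_1,0}(D)$.

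The crucial step is randomization. Applied to $f=\Psi\lambda$ with $\lambda$ of finite support and sign--randomized as $\epsilon_{j,k}\lambda_{j,k}$, Khintchine's inequality pointwise on $D$ and Fubini give
\[
\int_D \Big(\sum_{j,k}\abs{\lambda_{j,k}}^2 \abs{B^{\vect{s''}}_{(\zeta_{j,k},z_{j,k})}}^2 \Delta_\Omega^{2[(\vb+\vd)/p_1-\vs_1-\vect{s''}]}(h_k)\Big)^{q_2/2}\! \abs{g*I^{-\vect{s'}}_\Omega}^{q_2} \Delta_\Omega^{q_2(\vect{s_2}+\vect{s'})-\vb-\vd}(\rho)\,\dd\nu_D \meg C\, \norm{\lambda}_{\ell^{p_1,q_1}}^{q_2},
\]
since the $\ell^{p_1,q_1}$--norm is insensitive to signs. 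Restricting the square sum to the single index associated with the lattice ball $B_{j,k}\ni (\zeta,z)$ (the $B((\zeta_{j,k},z_{j,k}),\delta)$ are disjoint), exploiting the pointwise lower bound $|B^{\vect{s''}}_{(\zeta_{j,k},z_{j,k})}(\zeta,z)|\gtrsim \Delta_\Omega^{\vect{s''}}(h_k)$ on $B_{j,k}$ from~\cite[Theorem~2.47]{CalziPeloso}, and the near-constancy of $\Delta_\Omega$--powers on Bergman balls, I reduce the bound to the discrete inequality
\[
\sum_{j,k}\abs{\lambda_{j,k}}^{q_2}\Delta_\Omega^{q_2\vs^*-\vb-\vd}(h_k)\int_{B_{j,k}}\abs{g*I^{-\vect{s'}}_\Omega}^{q_2}\,\dd\nu_D \meg C\,\norm{\lambda}_{\ell^{p_1,q_1}}^{q_2},
\]
with $\vs^*\coloneqq (\vb+\vd)/p_1-\vs_1+\vect{s_2}+\vect{s'}$. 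By subharmonicity of $|g*I^{-\vect{s'}}_\Omega|^{q_2}$ and the Bergman-invariance of $\nu_D(B_{j,k})$, $\int_{B_{j,k}}|g*I^{-\vect{s'}}_\Omega|^{q_2}\dd\nu_D\gtrsim |(g*I^{-\vect{s'}}_\Omega)(\zeta_{j,k},z_{j,k})|^{q_2}$.

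Setting $\mu_{j,k}\coloneqq \Delta_\Omega^{\vs^*-(\vb+\vd)/q_2}(h_k)\,(g*I^{-\vect{s'}}_\Omega)(\zeta_{j,k},z_{j,k})$, the previous inequality says precisely that the diagonal multiplier $\lambda\mapsto \lambda\mu$ is bounded from $\ell^{p_1,q_1}(J,K)$ into $\ell^{q_2,q_2}(J,K)$. By the mixed-norm Hölder inequality, this is equivalent to $\mu\in \ell^{p_3,q_3}(J,K)$, and a direct check of exponents shows $\vs^*-(\vb+\vd)/q_2=\vect{s_3}+\vect{s'}-(\vb+\vd)/p_3$. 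Therefore $\mu$ is exactly the sampling sequence of $g*I^{-\vect{s'}}_\Omega$ with respect to $A^{p_3,q_3}_{\vect{s_3}+\vect{s'}}(D)$, and the sampling characterization~\cite[Theorem~3.22]{CalziPeloso} yields $g*I^{-\vect{s'}}_\Omega\in A^{p_3,q_3}_{\vect{s_3}+\vect{s'}}(D)$, i.e.\ $g\in \breve A^{p_3,q_3}_{\vect{s_3},\vect{s'}}(D)$.

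The main obstacle I foresee is the randomization--Khintchine step for $q_2<1$, where convergence of the randomized series in $A^{p_1,q_1}_{\vs_1,0}$ must be handled by truncation and passage to the limit, and where the Khintchine two-sided equivalence still holds but with constants depending on $q_2$. A secondary issue is the degeneracy of the $\ell^{p,q}$--Hölder identification when $p_3$ or $q_3$ is infinite (i.e.\ when $1/q_2\meg 1/p_1$ or $1/q_2\meg 1/q_1$); these cases, however, fall under Proposition~\ref{prop:5bis}, so one may restrict the discrete-duality step to finite indices.
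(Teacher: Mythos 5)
Your proposal is correct in substance, but it takes a more self-contained, lower-level route than the paper. The paper's proof is short: it observes that the hypothesis says exactly that the measure with density $(\Delta_\Omega^{q_2(\vs_2+\vs')-\vb-\vd}\circ\rho)\abs{g*I^{-\vs'}_\Omega}^{q_2}$ with respect to $\nu_D$ is a $q_2$-Carleson measure for $A^{p_1,q_1}_{\vs_1,0}(D)$, invokes the characterization of such measures from \cite[Theorem 5.5]{CPCarleson} (this is where the hypothesis $\atomic^{p_1,q_1}_{\vs_1,\vs'',0}$ is consumed) to place the ball-average function $(\zeta,z)\mapsto\norm{\chi_{B((\zeta,z),1)}(g*I^{-\vs'}_\Omega)}^{q_2}_{L^{q_2,q_2}_{\vs_2+\vs'}}$ in $L^{(p_1/q_2)',(q_1/q_2)'}$ with the appropriate weight, and then converts this into membership of $g*I^{-\vs'}_\Omega$ in $A^{p_3,q_3}_{\vs_3+\vs'}(D)$ via the sub-mean-value estimate $\Delta_\Omega^{\vs_2+\vs'-(\vb+\vd)/q_2}(\rho(\zeta,z))\abs{h(\zeta,z)}\meg C\norm{\chi_{B((\zeta,z),\rho)}h}_{L^{q_2,q_2}_{\vs_2+\vs'}}$. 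What you do is essentially inline the proof of that cited theorem: atomic synthesis, Khintchine randomization, restriction of the square function to the disjoint lattice balls, and the converse Hölder/multiplier argument in $\ell^{p,q}$. This buys independence from \cite{CPCarleson} at the cost of length, and your exponent bookkeeping ($\vs^*-(\vb+\vd)/q_2=\vs_3+\vs'-(\vb+\vd)/p_3$) is right. Two soft spots to tighten. First, your last step should not go through point values and \cite[Theorem 3.22]{CalziPeloso}: the two-sided point-value sampling inequality is stated only for functions already known to lie in the space, so finiteness of the sampled sequence alone does not yield membership. Instead, keep the local integrals $\int_{B_{j,k}}\abs{g*I^{-\vs'}_\Omega}^{q_2}\,\dd\nu_D$ that your discrete inequality actually controls, dominate $\max_{\overline B((\zeta_{j,k},z_{j,k}),R\delta)}\abs{g*I^{-\vs'}_\Omega}$ by the average over a slightly larger ball (finite overlap), and invoke the sup-over-balls version \cite[Theorem 3.23]{CalziPeloso} — this is exactly the role of the displayed sub-mean-value estimate in the paper's proof. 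Second, the degenerate cases: Proposition~\ref{prop:5bis} covers $q_2=\infty$ (where $p_3=q_3=\infty$), but when $q_2<\infty$ and only one of $p_3,q_3$ is infinite the target is $\breve A^{\infty,q_3}$ or $\breve A^{p_3,\infty}$, which Proposition~\ref{prop:5bis} does not give; those cases must still be run through the discrete multiplier argument (the converse Hölder step in the infinite-exponent slot just produces an $\ell^\infty$ condition, which is fine, but it cannot simply be delegated to Proposition~\ref{prop:5bis}).
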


This extends one implication of~\cite[Corollary 3.9 (ii)]{NanaSehba}, where the case in which $p_1=q_1,q_2\in [1,\infty[$,  $\vect{s_1}=\vect{s_2}\in \R\vect 1_r$, and $D$ in an irreducible symmetric tube domain, is considered. 

\begin{proof}
If $q_2=\infty$, then the assertion follows from Proposition~\ref{prop:5bis}, so that we may assume that $q_2<\infty$.
Let $\mi$ be the measure on $D$ with density
\[
(\zeta,z)\mapsto \Delta_\Omega^{q_2\vect s+\vect d}(\rho(\zeta,z)) \abs{(g*I^{-\vect{s'}}_\Omega)(\zeta,z)}^{q_2}
\]
with respect to Lebesgue measure. Then, the assumption means that $A^{p_1,q_1}_{\vect {s_1},0}(D)$ embeds continuously into $L^{q_2}(\mi)$. Thus,~\cite[Theorem 5.5]{CPCarleson} implies that the function 
\[
(\zeta,z)\mapsto \mi(B(((\zeta,z),1))= \norm{\chi_{B((\zeta,z),1)} (g*I^{-\vect{s'}}_\Omega) }_{L^{q_2,q_2}_{\vect{s_2}}(D)}^{q_2}
\] 
belongs to $L^{(p_1/q_2)',(q_1/q_2)'}_{\max(p_1,q_2)(\vect b+\vect d)/p_1-q_2 \vect{s_1}}(D)$. Then, arguing as in the proof of Proposition~\ref{prop:5bis}  we may find two constants $\rho,C>0$, with $\rho\meg 1$, such that
\[
\Delta_\Omega^{\vect{s_2}-(\vect b+\vect d)/q_2}\abs{h(\zeta,z)}\meg C \norm{\chi_{B((\zeta,z),\rho)} h}_{L^{q_2,q_2}_{\vect{s_2}(D)}}
\]
for every $h\in \Hol(D)$. Therefore,  $g\in A^{p_3,q_3}_{\vect{s_3},\vect{s'}}(D)$.
\end{proof}

\begin{prop}\label{prop:29}
Take $p\in ]0,\infty[$, $\vect{s_1},\vect{s_2}\in\frac{1}{2 p}\vect m+(\R_+^*)^r$, $\vect{s'}\in \N_{\Omega'}$, and $g\in \breve A^{\infty,\infty}_{\vect{s_2}-\vect{s_1},\vect{s'}}(D)$. Then, the following conditions are equivalent:
\begin{enumerate}
\item[{\em(1)}] $\Cc_{g,\vect{s'}}$ induces an isomorphism of $A^{p,p}_{\vect{s_1}}(D)$ onto a closed subspace of $A^{p,p}_{\vect{s_2}}(D)$;

\item[{\em(2)}] there are $\eps,R,C>0$ such that 
\[
\nu_D\left(\Set{(\zeta',z')\in B((\zeta,z),R)\colon \abs{(g*I^{-\vect{s'}}_\Omega)(\zeta',z')}> \eps \Delta_\Omega^{\vect{s_1}-\vect{s_2}-\vect{s'}}(\rho(\zeta',z'))   } \right)\Meg C
\]
for every $(\zeta,z)\in D$.
\end{enumerate}  
\end{prop}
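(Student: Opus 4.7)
Let $\phi:=g*I^{-\vect{s'}}_\Omega$. Since $g\in\breve A^{\infty,\infty}_{\vect{s_2}-\vect{s_1},\vect{s'}}(D)$, we have $\phi\in A^{\infty,\infty}_{\vect{s_2}-\vect{s_1}+\vect{s'}}(D)$; equivalently,
\[
|\phi(\zeta,z)|\meg C_0\,\Delta_\Omega^{\vect{s_1}-\vect{s_2}-\vect{s'}}(\rho(\zeta,z)),\qquad (\zeta,z)\in D.
\]
Via the Riemann--Liouville isomorphism of~\cite[Proposition~5.13]{CalziPeloso}, the operator $\Cc_{g,\vect{s'}}\colon A^{p,p}_{\vect{s_1}}(D)\to A^{p,p}_{\vect{s_2}}(D)$ is intertwined with the multiplication operator $M_\phi\colon f\mapsto f\phi$ (acting between two appropriately indexed Bergman spaces), and this intertwining is an isomorphism of the respective codomains. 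The continuity of $M_\phi$ follows directly from the pointwise bound above, so the content of the proposition is the equivalence of (2) with the lower bound $\|M_\phi f\|\gtrsim\|f\|$ uniform in $f$.

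For the implication (2)$\Rightarrow$(1), on each $E_{(\zeta,z)}$ supplied by (2), the inequality $|\phi|>\eps\,\Delta_\Omega^{\vect{s_1}-\vect{s_2}-\vect{s'}}$ together with the slow variation of $\Delta_\Omega^{\vect s}\circ\rho$ on balls of fixed Bergman radius (\cite[Corollary~2.49]{CalziPeloso}) yields
\[
\int_{E_{(\zeta,z)}}|f\phi|^p\,\Delta_\Omega^{p(\vect{s_2}+\vect{s'})-\vb-\vd}\,\dd\nu_D\gtrsim\eps^p\,\Delta_\Omega^{p\vect{s_1}-\vb-\vd}(\rho(\zeta,z))\int_{E_{(\zeta,z)}}|f|^p\,\dd\nu_D.
\]
The plan is now to invoke the classical Luecking-type reverse inequality for $|f|^p$, $f\in\Hol(D)$: for any measurable $E\subset B((\zeta,z),R)$ with $\nu_D(E)\Meg C$,
\[
\int_E|f|^p\,\dd\nu_D\Meg c'(C,R,p)\int_{B((\zeta,z),R)}|f|^p\,\dd\nu_D,
\]
with a constant independent of $(\zeta,z)$ by the transitivity of the biholomorphism group of $D$, so that all Bergman balls of radius $R$ are biholomorphic images of $B((0,ie_\Omega),R)$. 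Reinstating the weight via slow variation and summing over a $(\delta,4)$-lattice (\cite[Lemma~2.55]{CalziPeloso}) whose radius-$R$ balls cover $D$ with bounded overlap then produces the desired lower bound.

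For (1)$\Rightarrow$(2), I argue by contrapositive. Fix $\vect{s''}$ so that $B^{\vect{s''}}_{(\zeta_0,z_0)}\in A^{p,p}_{\vect{s_1}}(D)$ with the sharp asymptotic norm given by~\cite[Proposition~2.41]{CalziPeloso}, and set $b_{(\zeta_0,z_0)}:=\|B^{\vect{s''}}_{(\zeta_0,z_0)}\|^{-1}_{A^{p,p}_{\vect{s_1}}}B^{\vect{s''}}_{(\zeta_0,z_0)}$, a unit-norm test function. If (2) fails, then for any prescribed $\eps,R,C>0$ there is $(\zeta_0,z_0)\in D$ with $\nu_D(E_{(\zeta_0,z_0)})<C$. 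Decompose
\[
\|M_\phi b_{(\zeta_0,z_0)}\|^p=I_1+I_2+I_3,
\]
with the three integrals taken over $B((\zeta_0,z_0),R)\cap E_{(\zeta_0,z_0)}$, $B((\zeta_0,z_0),R)\setminus E_{(\zeta_0,z_0)}$, and $D\setminus B((\zeta_0,z_0),R)$, respectively. The $A^{\infty,\infty}$-bound on $\phi$ combined with $\nu_D(E_{(\zeta_0,z_0)})<C$ controls $I_1$; the definition of the complement of $E_{(\zeta_0,z_0)}$ controls $I_2$; the off-diagonal decay of $B^{\vect{s''}}_{(\zeta_0,z_0)}$ from~\cite[Theorem~2.47]{CalziPeloso} plus the $A^{\infty,\infty}$-bound on $\phi$ controls $I_3$. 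Each term becomes arbitrarily small as we first send $R\to\infty$, then $\eps\to 0$, then $C\to 0$, contradicting the uniform lower bound on $M_\phi$ furnished by (1).

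The principal technical obstacle is the Luecking-type reverse inequality on Bergman balls of $D$ with uniform constants. This is classical on the disc/ball for $|f|^p$ subharmonic, and I expect it to transfer to homogeneous Siegel domains by exploiting the transitivity of the biholomorphism group together with the invariance of $\nu_D$ (reducing everything to one reference ball). The remainder is bookkeeping of generalized power weights; the favourable feature is that the exponent $\vect{s_1}-\vect{s_2}-\vect{s'}$ appears identically both in the $A^{\infty,\infty}$-bound on $\phi$ and in the defining inequality of $E_{(\zeta,z)}$, so that every weight factor cancels cleanly in both directions of the argument.
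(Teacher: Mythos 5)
Your reduction to the multiplication operator $M_\phi$, $\phi=g*I^{-\vect{s'}}_\Omega$, is correct, and your contrapositive argument for (1)$\Rightarrow$(2) --- testing $M_\phi$ on the normalized kernels $b_{(\zeta_0,z_0)}$ and splitting the norm into $I_1+I_2+I_3$ --- is a workable alternative to the paper's route (the paper instead derives the inequality $C_1\norm{f}_{A^{p,p}_{\vect{s_1}}}\meg C_2\norm{\chi_{G_\eps}f}_{A^{p,p}_{\vect{s_1}}}+\eps\norm{\chi_{D\setminus G_\eps}f}_{A^{p,p}_{\vect{s_1}}}$ for $G_\eps=\{\abs{\phi}>\eps\,\Delta_\Omega^{\vect{s_1}-\vect{s_2}-\vect{s'}}\circ\rho\}$ and then invokes the characterization of reverse Carleson measures in \cite[Theorem~7.3]{CPCarleson} to produce unit vectors $f_j$ with $\norm{\chi_{G_{1/j}}f_j}\meg 1/j$, reaching the same contradiction).

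The genuine gap is in (2)$\Rightarrow$(1): the ``Luecking-type reverse inequality'' on a single Bergman ball that you identify as the principal obstacle is \emph{false}, already in the unit disc. Take the Bergman ball $B=B(0,R)$, which is a Euclidean disc $B(0,\rho)$, let $E=B(0,\rho')$ with $\rho'<\rho$, so that $\nu_D(E)$ is a fixed positive constant, and let $f(z)=z^n$: then $\int_E\abs{f}^p\,\dd\nu_D\big/\int_B\abs{f}^p\,\dd\nu_D\approx(\rho'/\rho)^{np+2}\to 0$ as $n\to\infty$, so no constant $c'(C,R,p)$ independent of $f$ can exist. (Indeed, if such a local inequality held, Luecking's theorem would follow by the trivial summation you describe, which is a warning sign.) The density hypothesis must be imposed on \emph{all} balls simultaneously, and the conclusion is only the global estimate $\norm{f}_{A^{p,p}_{\vect{s_1}}}\lesssim\norm{\chi_{G_\eps}f}_{A^{p,p}_{\vect{s_1}}}$; it cannot be obtained ball by ball. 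This global statement is exactly \cite[Theorem~7.3]{CPCarleson}, and the paper's proof of (2)$\Rightarrow$(1) consists of quoting it and then writing $\norm{\Cc_{g,\vect{s'}}f}_{\breve A^{p,p}_{\vect{s_2},\vect{s'}}(D)}=\norm{f\phi}_{A^{p,p}_{\vect{s_2}+\vect{s'}}(D)}\Meg\eps\norm{\chi_{G_\eps}f}_{A^{p,p}_{\vect{s_1}}(D)}\Meg(\eps/C_3)\norm{f}_{A^{p,p}_{\vect{s_1}}(D)}$. You need to replace your local lemma by that theorem (or reprove it, which is a substantial piece of work along the lines of Luecking's original global argument, not a covering argument).
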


This extends~\cite[Corollary 1]{Luecking}, which deals with the case
in which $\vect{s'}=0$, $\vect{s_1}=\vect{s_2}=-\vect d/p$, and $D$ is
the unit disc in $\C$ (but $g$ is a bounded \emph{measurable}
function).  
\begin{proof}
(1) $\implies$ (2). Define 
\[
G_\eps\coloneqq \Set{(\zeta,z)\in D\colon \abs{(g*I^{-\vect{s'}}_\Omega)(\zeta,z)}>\eps \Delta_\Omega^{\vect{s_1}-\vect{s_2}-\vect{s'}}(\rho(\zeta,z))}
\]
for every $\eps>0$. Observe that, by assumption, there is a constant $C_1>0$ such that
\[
\norm{\Cc_{g,\vect{s'}}f}_{\breve A^{p,p}_{\vect {s_2},\vect{s'}}(D)}\Meg C_1 \norm{f}_{A^{p,p}_{\vect{s_1}}(D)}
\] 
for every $f\in A^{p,p}_{\vect{s_1}}(D)$, so that
\[
C_1\norm{f}_{A^{p,p}_{\vect{s_1}}(D)}\meg \norm{f (g*I^{-\vect{s'}}_\Omega)  }_{A^{p,p}_{\vect{s_2}+\vect{s'}}(D)}
\]
for every $f\in A^{p,p}_{\vect{s_1}(D)}$. If we define $C_2\coloneqq \norm{g}_{\breve A^{\infty,\infty}_{\vect{s_2}-\vect{s_1},\vect{s'}}(D)  }$, then
\[
C_1\norm{f}_{A^{p,p}_{\vect{s_1}}(D)}\meg C_2\norm{\chi_{G_\eps}f}_{A^{p,p}_{\vect{s_1}}(D)}+\eps \norm{\chi_{D\setminus G_\eps} f}_{A^{p,p}_{\vect{s_1}}(D)}
\]
for every $\eps>0$ and for every $f\in A^{p,p}_{\vect{s_1}(D)}$. 

Now, assume by contradiction that (2) is not satisfied. Then,~\cite[Theorem 7.3]{CPCarleson} implies that for every $j\in\N^*$ there is $f_j\in A^{p,p}_{\vect s+\vect{s'}}(D)$ such that $\norm{f_j}_{A^{p,p}_{\vect{s_1}}(D)}=1$ and such that
\[
\norm{\chi_{G_{1/j}}f_j}_{A^{p,p}_{\vect{s_1}}(D)}\meg 1/j.
\]
Then,
\[
C_2 \meg \lim_{j\to \infty} \left(  C_2\norm{\chi_{G_{1/j}}f_{1/j}}_{A^{p,p}_{\vect{s_1}}(D)}+\eps \norm{\chi_{D\setminus G_{1/j}} f_j}_{A^{p,p}_{\vect{s_1}}(D)}\right) =0,
\]
which is absurd.

(2) $\implies$ (1). By~\cite[Theorem 7.3]{CPCarleson}, there is a constant $C_3>0$ such that
\[
\norm{f}_{A^{p,p}_{\vect{s_1}}(D)}\meg C_3\norm{\chi_{G_\eps}f}_{A^{p,p}_{\vect{s_1}}(D)}
\]
for every $f\in A^{p,p}_{\vect{s_1}}(D)$, so that
\[
\begin{split}
\norm{\Cc_{g,\vect{s'}}f}_{\breve A^{p,p}_{\vect {s_2},\vect{s'}}(D)}&= \norm{f(g*I^{-\vect{s'}}_\Omega)}_{A^{p,p}_{\vect{s_2},\vect{s'}}(D)}\\
&\Meg \eps \norm{\chi_{G_\eps} f}_{A^{p,p}_{\vect{s_1}}(D)}\\
&\Meg \frac{\eps}{C_3} \norm{f}_{A^{p,p}_{\vect{s_1}}(D)}
\end{split}
\]
for every $f\in A^{p,p}_{\vect{s_1}}(D)$. The conclusion follows by means of Proposition~\ref{prop:4}.
\end{proof}

\begin{teo}\label{prop:30}
Take $\vect{s_1}\in \frac 1 4 \vect m+(\R_+^*)^r$, $\vect{s_2}\in \R^r$, $\vect{s'}\in \N_{\Omega'}$, $p\in ]0,\infty[$, and $g\in \Hol(D)$. Then, $\Cc_{g,\vect{s}}\in \Lin^p(A^{2,2}_{\vect {s_1}}(D);\breve A^{2,2}_{\vect{s_2},\vect{s'}}(D))$ if and only if $g\in \breve A^{p,p}_{\vect{s_2}-\vect{s_1}+(\vect b+\vect d)/p,\vect{s'}}(D)$.
\end{teo}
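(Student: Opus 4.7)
The plan is to reduce this Schatten-class characterization to the Toeplitz one provided by Theorem~\ref{prop:33}. The first move is to pass from $\Cc_{g,\vs'}$ to a multiplication operator via the canonical isomorphism $f \mapsto f*I^{-\vs'}_\Omega$ from $\breve A^{2,2}_{\vs_2,\vs'}(D)$ onto $A^{2,2}_{\vs_2+\vs'}(D)$. Setting $h\coloneqq g*I^{-\vs'}_\Omega$, this identifies $\Cc_{g,\vs'}$ with the multiplication operator $M_h\colon A^{2,2}_{\vs_1}(D)\to A^{2,2}_{\vs_2+\vs'}(D)$, $f\mapsto fh$. Since Schatten quasi-norms satisfy $\norm{T}_{\Lin^p}^p = \norm{T^*T}_{\Lin^{p/2}}^{p/2}$ (Section~\ref{sec:Schatten}), the theorem is equivalent to characterizing when $M_h^*M_h \in \Lin^{p/2}(A^{2,2}_{\vs_1}(D))$.

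The key observation is that $M_h^*M_h$ is, up to a constant, a Toeplitz operator. Using the Fubini-type identity $\norm{f}^2_{A^{2,2}_\vs(D)} = \int_D \abs{f}^2 (\Delta^{2\vs-\vb-\vd}_\Omega \circ \rho)\,\dd \nu_D$, one computes
\[
\norm{M_h f}^2_{A^{2,2}_{\vs_2+\vs'}(D)} = \int_D \abs{f}^2 \,\dd \mi_g, \qquad \mi_g \coloneqq \abs{h}^2 (\Delta^{2(\vs_2+\vs')-\vb-\vd}_\Omega \circ \rho) \cdot \nu_D.
\]
Setting $\vect{s^*} \coloneqq \vb+\vd-2\vs_1$, so that $\Delta^{-\vect{s^*}}_\Omega$ is the weight appearing in the $A^{2,2}_{\vs_1}(D)$ inner product, Proposition~\ref{prop:11}(i) yields the identity $M_h^*M_h = c_{\vect{s^*}} T_{\mi_g, \vect{s^*}}$ as operators on $A^{2,2}_{\vs_1}(D)$, once $M_h$ is known to be bounded. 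Thus it suffices to determine when $T_{\mi_g, \vect{s^*}} \in \Lin^{p/2}(A^{2,2}_{\vs_1}(D))$.

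At this point Theorem~\ref{prop:33} applies with parameters $\vs = \vs'' = \vs_1$ and $\vect{s'} = \vect{s^*}$: all three of $\vs_1,\vs_1,\vb+\vd-\vect{s^*}-\vs_1 = \vs_1$ lie in $\frac{1}{4}\vm + (\R_+^*)^r$, and since $\mi_g$ is positive with $\vs = \vs''$ both implications of Theorem~\ref{prop:33} are available at exponent $p/2$, even when $p/2<1$. This gives $T_{\mi_g, \vect{s^*}} \in \Lin^{p/2}$ if and only if $M_1(\mi_g) \in L^{p/2,p/2}_{(1+2/p)(\vb+\vd)-2\vs_1}(D)$. Because $\abs{h}^2$ is plurisubharmonic and $\Delta^\vs_\Omega \circ \rho$ has bounded distortion on Bergman balls (Theorem~2.47 of~\cite{CalziPeloso}), one obtains the pointwise comparison
\[
M_1(\mi_g)(\zeta,z) \approx \abs{h(\zeta,z)}^2 \Delta^{2(\vs_2+\vs')-\vb-\vd}_\Omega(\rho(\zeta,z)).
\]
Substituting into the $L^{p/2,p/2}$ norm and simplifying exponents, the resulting finiteness condition becomes $\int_D \abs{h}^p (\Delta^{p(\vs_2+\vs')-p\vs_1}_\Omega\circ\rho)\,\dd \nu_D < \infty$, which is exactly $h \in A^{p,p}_{\vs_2-\vs_1+(\vb+\vd)/p+\vs'}(D)$, i.e., $g \in \breve A^{p,p}_{\vs_2-\vs_1+(\vb+\vd)/p,\vs'}(D)$.

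The main obstacle I anticipate is the logical circularity in the sufficiency direction, since the identification $M_h^*M_h = c_{\vect{s^*}} T_{\mi_g, \vect{s^*}}$ presupposes boundedness of $M_h$. The remedy is to reverse the order: the assumed membership of $g$ first yields $M_1(\mi_g) \in L^{p/2,p/2}_{\cdots}$ via the pointwise equivalence, hence $T_{\mi_g,\vect{s^*}} \in \Lin^{p/2}$ by Theorem~\ref{prop:33}; the corresponding quadratic form $f \mapsto \int_D \abs{f}^2\,\dd\mi_g = \norm{fh}^2_{A^{2,2}_{\vs_2+\vs'}}$ is then a fortiori bounded on $A^{2,2}_{\vs_1}(D)$, yielding boundedness of $M_h$, at which stage Proposition~\ref{prop:11}(i) applies legitimately and closes the loop. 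The other technical point, the pointwise equivalence $M_1(\mi_g) \approx \abs{h}^2\,\Delta^{2(\vs_2+\vs')-\vb-\vd}_\Omega\!\circ\rho$, is a standard consequence of the mean-value inequality for plurisubharmonic functions combined with the quasi-invariance of $\Delta_\Omega \circ \rho$ on Bergman balls.
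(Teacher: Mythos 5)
Your proposal is correct and follows essentially the same route as the paper's proof: both reduce $\Cc_{g,\vect{s'}}$ to the positive Toeplitz operator $T_{\mi,\vect b+\vect d-2\vect{s_1}}$ with $\mi=\abs{g*I^{-\vect{s'}}_\Omega}^2(\Delta_\Omega^{2(\vect{s_2}+\vect{s'})-\vect b-\vect d}\circ\rho)\cdot\nu_D$ via the identity $\norm{\Cc_{g,\vect{s'}}f}^2=\int_D\abs{f}^2\,\dd\mi=c\,\langle T_{\mi,\vect{s''}}f\vert f\rangle$ (the paper phrases this through a partial isometry $U$ with $U\Cc_{g,\vect{s'}}=c^{1/2}T_{\mi,\vect{s''}}^{1/2}$ rather than through $M_h^*M_h$, which is equivalent), and both then apply Theorem~\ref{prop:33} at exponent $p/2$ with $\vect s=\vect{s''}=\vect{s_1}$ and convert the $M_1(\mi)$ condition to membership of $g*I^{-\vect{s'}}_\Omega$ in the appropriate $A^{p,p}$ space. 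Your handling of the boundedness circularity matches in substance the paper's preliminary step equating continuity of $\Cc_{g,\vect{s'}}$ with continuity of $T_{\mi,\vect{s''}}$.
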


This extends one implication of~\cite[Theorem 4.3]{NanaSehba}, where the case in which $\vect{s_1}=\vect{s_2}\in \R\vect 1_r$, $p\Meg 2$, and $D$ in an irreducible symmetric tube domain, is considered. 

\begin{proof}
Let $\mi$ be the measure on $D$ with density $ (\Delta_\Omega^{2 \vect {s_2}+2 \vect{s'}-(\vect b+\vect d)}\circ \rho)\abs{g*I^{-\vect{s'}}_\Omega}^2$ with respect to $\nu_D$. Define $\vect{s''}\coloneqq \vect b+\vect d-2 \vect{s_1}$. 
Notice that $\Cc_{g,\vect{s'}}\in \Lin(A^{2,2}_{\vect {s_1}}(D);\breve A^{2,2}_{\vect {s_2}}(D))$ if and only if $g\in \breve A^{\infty,\infty}_{\vect{s_2}-\vect{s_1},\vect{s'}}(D)$, thanks to Propositions~\ref{prop:4} and~\ref{prop:5bis}, and that  $T_{\mi,\vect{s''}}\in \Lin(A^{2,2}_{\vect{s_1}}(D)) $ if and only if $M_1(\mi)\in L^{\infty,\infty}_{\vect{s''}}(D)$, thanks to Proposition~\ref{prop:11}. Therefore, arguing as in the proof of Proposition~\ref{prop:9}, we see that $\Cc_{g,\vect{s'}}\in \Lin(A^{2,2}_{\vect {s_1}}(D);\breve A^{2,2}_{\vect {s_2}}(D))$ if and only $T_{\mi,\vect{s''}}\in \Lin(A^{2,2}_{\vect{s_1}}(D)) $. Hence, we may assume that both of these conditions hold.
Since we may also assume that $g\neq 0$, we deduce that $\vect{s_2}+\vect{s'}\in\frac{1}{4}\vect m+(\R_+^*)^r$.

Then,  for every $f\in A^{2,2}_{\vect{s_1}}(D)$,
\[
\begin{split}
\norm{\Cc_{g,\vect{s'}} f  }_{\breve A^{2,2}_{\vect{s_2},\vect{s'}}(D)}^2&= \norm{f  (g*I^{-\vect{s'}}_\Omega) }_{A^{2,2}_{\vect{s_2}+\vect{s'}}(D)}^2\\
&=\norm{f}_{L^2(\mi)}^2\\
&=c_{\vect{s''}}\abs{\langle T_{\mi,\vect{s''}} f\vert f\rangle_{A^{2,2}_{\vect{s_1}}(D)}}  ,
\end{split}
\]
where the last equality follows from Proposition~\ref{prop:11}. 
Then, $T_{\mi,\vect{s''}} $ is (self-adjoint and) positive, so that
\[
\norm{\Cc_{g,\vect{s'}} f  }_{\breve A^{2,2}_{\vect{s_2},\vect{s'}}(D)}=c_{\vect{s''}}^{1/2} \norm{T_{\mi,\vect{s''}}^{1/2}}_{A^{2,2}_{\vect{s_1}}(D)}
\]
for every $f\in A^{2,2}_{\vect{s_1}}(D)$.
Hence, there is a linear isometry of $\Cc_{g,\vect{s'}}(A^{2,2}_{\vect{s_1}}(D))$ onto $T_{\mi,\vect{s''}}^{1/2}(A^{2,2}_{\vect{s_1}}(D))$ such that
\[
U\Cc_{g,\vect{s'}} =c_{\vect{s''}}^{1/2}T_{\mi,\vect{s''}}^{1/2},
\]
so that Proposition~\ref{prop:15} implies that
\[
\norm{\Cc_{g,\vect{s'}} }_{\Lin^p(A^{2,2}_{\vect {s_1}}(D);\breve A^{2,2}_{\vect{s_2},\vect{s'}}(D))}=c_{\vect{s''}}^{1/2} \norm{T_{\mi,\vect{s''}}}^2_{\Lin^{p/2}(A^{2,2}_{\vect {s_1}}(D))}.
\]
The assertion then follows from Theorem~\ref{prop:33}.
\end{proof}

\section{Schatten Classes}\label{sec:Schatten}

In this section we review some basic fact about Schatten classes of
linear mappings between two hilbertian spaces  (cf.~Definition~\ref{def:Schatten-classes}).

\begin{prop}\label{prop:25bis}
Let $H_1,H_2$ be two hilbertian spaces,  and $T\in \Lin(H_1;H_2)$. Then, the mapping
\[
]0,\infty]\ni p\mapsto \norm{T}_{\Lin^p(H_1;H_2)}\in [0,\infty]
\]
is decreasing.
\end{prop}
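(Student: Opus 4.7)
The plan is to reduce the statement to the well-known monotonicity of $\ell^p$-norms via the singular value decomposition. First I would dispose of the trivial case: if $\norm{T}_{\Lin^p(H_1;H_2)}<\infty$ for some $p<\infty$, then $(T^*T)^{p/2}$ is trace class; since the trace class is contained in the compact ideal, $(T^*T)^{p/2}$ is compact, hence so is $T^*T$ and therefore $T$ itself. Consequently, if $T$ is not compact, then $\norm{T}_{\Lin^p(H_1;H_2)}=\infty$ for every $p\in]0,\infty[$, while $\norm{T}_{\Lin^\infty(H_1;H_2)}=\norm{T}$ may be finite, and the map is trivially decreasing. So from here on one may assume $T$ is compact (and, after replacing $H_1,H_2$ with the closures of $(\ker T)^\perp$ and $\overline{\Ima T}$, essentially separable).

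Next I would invoke the singular value decomposition of a compact operator to write
\[
T=\sum_{j\in J} s_j \langle\,\cdot\,\vert e_j\rangle f_j,
\]
with $(e_j)_{j\in J}$ and $(f_j)_{j\in J}$ orthonormal systems in $H_1$ and $H_2$ respectively, and with non-negative singular values $s_j\to 0$. Then $(T^*T)^{p/2}=\sum_j s_j^p \langle\,\cdot\,\vert e_j\rangle e_j$, so that
\[
\norm{T}_{\Lin^p(H_1;H_2)}=\Big(\sum_{j\in J} s_j^p\Big)^{1/p}=\norm{(s_j)}_{\ell^p(J)}
\]
for every $p\in ]0,\infty[$, while $\norm{T}_{\Lin^\infty(H_1;H_2)}=\norm{T}=\sup_j s_j=\norm{(s_j)}_{\ell^\infty(J)}$.

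The result then follows from the standard monotonicity of $\ell^p$-norms on non-negative sequences. For $0<p\meg q<\infty$ one has
\[
\sum_{j\in J} s_j^q=\sum_{j\in J} s_j^p\, s_j^{q-p}\meg \Big(\sup_j s_j\Big)^{q-p}\sum_{j\in J} s_j^p\meg \Big(\sum_{j\in J} s_j^p\Big)^{q/p},
\]
since $\sup_j s_j\meg (\sum_j s_j^p)^{1/p}$. Taking $q$-th roots gives $\norm{(s_j)}_{\ell^q(J)}\meg \norm{(s_j)}_{\ell^p(J)}$, and the case $q=\infty$ is the same first inequality $\sup_j s_j\meg(\sum_j s_j^p)^{1/p}$. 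There is no real technical obstacle here; the only thing one has to be a little careful about is the reduction to the compact case in order to have a genuine singular value decomposition at one's disposal.
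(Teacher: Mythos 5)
Your proof is correct and follows exactly the route the paper indicates: it reduces to the singular value decomposition of compact operators (after correctly disposing of the non-compact case, where all the $\Lin^p$-quasi-norms with $p<\infty$ are infinite) and then invokes the monotonicity of $\ell^p$-norms on the sequence of singular values. The paper gives no further detail, so your write-up is simply a fleshed-out version of the intended argument.
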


This result follows easily from the singular value decomposition of compact operators (cf., e.g.,~\cite[p.~261]{Kato}).

\begin{prop}\label{prop:15}
Let $H_1,H_1',H_2,H_2'$ be four hilbertian spaces, and take $U\in \Lin(H_1';H_1), T\in \Lin(H_1;H_2)$, and $V\in \Lin(H_2;H_2')$ and $p\in ]0,\infty]$. Then, the following hold:
\begin{enumerate}
\item[{\textrm{(1)}}] if $T\in\Lin^p_0(H_1,H_2)$ (resp.\ $T\in\Lin^p(H_1,H_2)$), then $V T U\in \Lin^p_0(H_1';H_2')$ (resp.\ $V T U\in \Lin^p(H_1';H_2')$)
and
\[
\norm{VTU}_{\Lin^p(H_1';H_2')}\meg \norm{V}_{\Lin(H_2;H_2')} \norm{T}_{\Lin^p(H_1;H_2)} \norm{U}_{\Lin(H_1';H_1)};
\]

\item[{\textrm{(2)}}]  if $U$ is onto, $V$ is an isomorphism onto its image, and $ V T U\in \Lin^p_0(H_1';H_2')$ (resp.\  $V T U\in \Lin^p(H_1';H_2')$), then $T\in\Lin^p_0(H_1,H_2)$ (resp.\ $T\in\Lin^p(H_1,H_2)$);

\item[{\textrm{(3)}}] if $U^*$ is an isometry on $T^*(H_2)$, and $V$ is an isometry on $T(H_1)$, then
\[
\norm{VTU}_{\Lin^p(H_1';H_2')}=\norm{T}_{\Lin^p(H_1;H_2)}.
\]
\end{enumerate}
\end{prop}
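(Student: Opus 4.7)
The plan is to reduce all three statements to standard properties of singular values. For $p\in ]0,\infty[$ I would take as starting point the identification $\norm{T}_{\Lin^p(H_1;H_2)}^p = \sum_n s_n(T)^p$, valid for every compact $T$, where $s_n(T) \coloneqq \inf\Set{\norm{T - K}\colon \mathrm{rank}(K)<n}$; for $p=\infty$ the claims reduce either to submultiplicativity of the operator norm or to the two-sided ideal property of the compact operators in $\Lin$. Throughout I would use $\norm{S}_{\Lin^p} = \norm{S^*}_{\Lin^p}$, which is immediate from $s_n(S)=s_n(S^*)$.

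Assertion (1) would follow from the singular value inequality $s_n(VTU)\meg \norm{V}\, s_n(T)\,\norm{U}$, itself immediate from the definition: if $\mathrm{rank}(K')<n$ then $\mathrm{rank}(VK'U)<n$ and $\norm{VTU-VK'U}\meg \norm{V}\norm{T-K'}\norm{U}$, so minimising over $K'$ gives the bound. Taking $\ell^p$ quasi-norms delivers the estimate in (1), and preservation of compactness follows from $s_n(VTU)\to 0$. For (2), I would construct bounded one-sided inverses: the open mapping theorem applied to $U|_{(\ker U)^\perp}$ produces a bounded right inverse $\tilde U\in \Lin(H_1;H_1')$ of $U$, while the fact that $V(H_2)$ is closed (since $V$ is an isomorphism onto its image) and $V^{-1}\colon V(H_2)\to H_2$ is bounded yields a bounded left inverse $\tilde V\coloneqq V^{-1}\circ P\in \Lin(H_2';H_2)$ of $V$, with $P$ the orthogonal projection of $H_2'$ onto $V(H_2)$. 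Then $T=\tilde V(VTU)\tilde U$, and (1) applied to this factorisation delivers the conclusion.

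The core of (3) is the identity $(VS)^*(VS)=S^*S$, valid whenever $V$ is isometric on $S(H_1)$. Indeed, polarisation yields $\langle V^*Vw,w'\rangle=\langle w,w'\rangle$ for all $w,w'\in S(H_1)$, whence
\[
\langle S^*V^*VSx,y\rangle=\langle VSx,VSy\rangle=\langle Sx,Sy\rangle=\langle S^*Sx,y\rangle
\]
for all $x,y\in H_1$. Consequently $\abs{VS}=\abs{S}$, so $s_n(VS)=s_n(S)$ for every $n$ and $\norm{VS}_{\Lin^p}=\norm{S}_{\Lin^p}$. Applying this with $S=TU$ (noting $TU(H_1')\subseteq T(H_1)$, so the isometry hypothesis on $V$ persists) gives $\norm{VTU}_{\Lin^p}=\norm{TU}_{\Lin^p}$; applying it to $(TU)^*=U^*T^*$ under the hypothesis that $U^*$ is isometric on $T^*(H_2)$, together with the adjoint identity above, gives $\norm{TU}_{\Lin^p}=\norm{U^*T^*}_{\Lin^p}=\norm{T^*}_{\Lin^p}=\norm{T}_{\Lin^p}$. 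Combining yields the desired equality.

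The only mild subtlety I anticipate is the quasi-Banach range $p\in ]0,1[$: every manipulation is made at the level of singular value sequences and keeps the same form in the $\ell^p$ quasi-norm, so no additional argument is needed once one has verified that $\norm{T}_{\Lin^p}^p=\sum_n s_n(T)^p$ holds for all $p\in ]0,\infty[$ -- a standard consequence of the spectral decomposition of $(T^*T)^{1/2}$ for compact $T$.
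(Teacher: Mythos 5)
Your proof is correct. The paper gives no argument for this proposition --- only a pointer to Zhu's Proposition 1.30 and Corollary 1.35 together with the general remark that such facts follow from the singular value decomposition --- and your reduction to the approximation numbers $s_n(T)=\inf\Set{\norm{T-K}\colon \mathrm{rank}(K)<n}$ is exactly the standard argument underlying those references: part (1) via $s_n(VTU)\meg\norm{V}s_n(T)\norm{U}$, part (2) via the bounded one-sided inverses $\tilde U,\tilde V$ and the factorisation $T=\tilde V(VTU)\tilde U$, and part (3) via $(VS)^*(VS)=S^*S$ applied first to $S=TU$ and then, after passing to adjoints, to $S=T^*$ with $U^*$ as the left factor, all check out, including the quasi-norm range $p<1$ and the $p=\infty$ cases.
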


This follows easily from~\cite[Proposition 1.30 and Corollary 1.35]{Zhu}.

\begin{prop}\label{prop:22}
Let $H$ be a hilbertian space, $p\in [1,\infty[$, and $T\in \Lin(H)$. Then, $T\in \Lin^p(H)$ if and only if
\[
\sum_{j\in J} \abs{\langle T v_j\vert v_j\rangle}^p<\infty
\]  
for every countable  orthonormal family $(v_j)_{j\in J}$  of elements of $H$. In addition,
\[
\frac{1}{2^{1/p' +(1/2-1/p)_+}}\norm{T}_{\Lin^p(H)}^p\meg \sup_{(v_j)} \sum_{j}\abs{\langle T v_j\vert v_j\rangle}^p\meg \norm{T}_{\Lin^p(H)}^p,
\]
where $(v_j)_{j\in J}$ runs through the set of orthonormal finite families of elements of $H$. If $T$ is normal, then equality holds.
\end{prop}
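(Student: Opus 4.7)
The claim consists of three parts: an upper bound $\sup\sum|\langle Tv_j|v_j\rangle|^p \leq \norm{T}_{\Lin^p(H)}^p$, equality for normal $T$, and the lower bound with the explicit constant. The first two are essentially standard; the real content is the reverse inequality.

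For the upper bound, I would start from the singular value decomposition $T=\sum_j s_j \langle\,\cdot\,|e_j\rangle f_j$ of the compact part of $T$, so that for any orthonormal family $(v_k)_{k\in K}$
\[
\langle Tv_k|v_k\rangle = \sum_j s_j \langle v_k|e_j\rangle \overline{\langle v_k|f_j\rangle}.
\]
A double Hölder argument in $j$ and $k$, combined with Bessel's inequality $\sum_k|\langle v_k|e_j\rangle|^2\leq 1$ and $\sum_k|\langle v_k|f_j\rangle|^2\leq 1$, gives $\sum_k|\langle Tv_k|v_k\rangle|^p\leq \sum_j s_j^p=\norm{T}_{\Lin^p(H)}^p$. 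This is a form of the Horn--Schur majorization inequality, and for $p=1$ recovers the classical trace-class bound.

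For the normal case, the spectral theorem provides an orthonormal system of eigenvectors $(e_j)$ in $(\ker T)^\perp$ with $Te_j=\lambda_j e_j$ and $\sum|\lambda_j|^p=\norm{T}_{\Lin^p(H)}^p$. Taking $(v_j)=(e_j)$ realizes the sup, so the three quantities coincide and the constant trivializes.

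For the reverse inequality in general, I would use the Cartesian decomposition $T=A+iB$ with $A=\tfrac12(T+T^*)$ and $B=\tfrac1{2i}(T-T^*)$ self-adjoint (hence normal). Since $\langle Av|v\rangle=\Rea\langle Tv|v\rangle$ and $\langle Bv|v\rangle=\Ima\langle Tv|v\rangle$ are real, the power-mean inequality gives
\[
|\langle Av|v\rangle|^p+|\langle Bv|v\rangle|^p\leq 2^{(1-p/2)_+}|\langle Tv|v\rangle|^p.
\]
Summing over an orthonormal family and passing to the supremum, the already-proved normal case applied to $A$ and $B$ yields $\norm{A}_{\Lin^p(H)}^p+\norm{B}_{\Lin^p(H)}^p\leq 2^{(1-p/2)_+}M$, where $M$ is the supremum on the right-hand side of the claim. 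Combining this with Minkowski's inequality $\norm{T}_{\Lin^p(H)}\leq \norm{A}_{\Lin^p(H)}+\norm{B}_{\Lin^p(H)}$ and one more application of a power-mean inequality produces a bound of the form $\norm{T}_{\Lin^p(H)}^p\leq C_p\, M$.

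The main obstacle, and the delicate point, is arranging this last step to produce exactly the constant $2^{1/p'+(1/2-1/p)_+}$ stated in the proposition. The naive Minkowski/Cartesian argument sketched above is already sharp enough at $p=2$ (where it reproduces the Hilbert--Schmidt identity $\norm{T}_{\Lin^2}^2=\norm{A}_{\Lin^2}^2+\norm{B}_{\Lin^2}^2$), but loses a factor at the endpoints, so achieving the optimal exponent almost certainly requires treating $p\in[1,2]$ and $p\in[2,\infty[$ by slightly different routes. In the range $p\in[1,2]$ one uses the upper bound $|\Rea z|^p+|\Ima z|^p\leq 2^{1-p/2}|z|^p$ combined with Minkowski (which contributes the factor $2^{1/p'}$), giving exactly $2^{1/p'}$ since $(1/2-1/p)_+=0$ there. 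For $p\geq 2$, one instead uses $|\Rea z|^p+|\Ima z|^p\leq|z|^p$ but must compensate in the Minkowski step, where a sharper interpolation between the Hilbert--Schmidt case and the operator-norm (numerical-radius) case contributes the additional factor $2^{1/2-1/p}$, matching $(1/2-1/p)_+$ on that range.
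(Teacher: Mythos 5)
Your overall route is the one the paper itself indicates: it proves this proposition by citing \cite{Zhu} (Theorem 1.27) and \cite{Ringrose} (Lemma 2.3.4) and by suggesting precisely the Cartesian decomposition into the self-adjoint operators $\frac{1}{2}(T+T^*)$ and $\frac{1}{2i}(T-T^*)$. Your first two steps are essentially correct: the SVD/H\"older/Bessel argument gives the upper bound $\sum_j\abs{\langle Tv_j\vert v_j\rangle}^p\meg\norm{T}_{\Lin^p(H)}^p$, and the spectral theorem settles the normal case. One small omission there: for the ``if'' direction of the equivalence you must first show that $A$ and $B$ are \emph{compact} when all the diagonal sums are finite (otherwise the spectral theorem does not furnish an orthonormal eigenbasis); this is exactly the point of the paper's parenthetical hint, and is done by extracting, from a nonzero point of the essential spectrum, an infinite orthonormal family along which $\langle Av_j\vert v_j\rangle$ stays bounded away from $0$.

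The genuine gap is the lower bound with the explicit constant, and it is substantive. First, a logical slip: from the per-family estimate $\sum_j\big(\abs{\langle Av_j\vert v_j\rangle}^p+\abs{\langle Bv_j\vert v_j\rangle}^p\big)\meg 2^{(1-p/2)_+}M$ you cannot deduce $\norm{A}_{\Lin^p}^p+\norm{B}_{\Lin^p}^p\meg 2^{(1-p/2)_+}M$, since the two suprema need not be attained at the same orthonormal family; you only get each term separately bounded, hence an extra factor $2$. Second, the recombination you propose (power mean plus Minkowski, $\norm{T}_{\Lin^p}^p\meg 2^{p-1}(\norm{A}_{\Lin^p}^p+\norm{B}_{\Lin^p}^p)$) produces a constant of order $2^{p+(1-p/2)_+}$, nowhere near the stated one; your final paragraph concedes this and appeals to a ``sharper interpolation'' that is never exhibited, so the quantitative claim is not proved. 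The tools that actually close this are concrete: for $1\meg p\meg 2$ the Clarkson--McCarthy inequality $\norm{A+iB}_{\Lin^p}^p+\norm{A-iB}_{\Lin^p}^p\meg 2(\norm{A}_{\Lin^p}^p+\norm{B}_{\Lin^p}^p)$ combined with $\norm{A-iB}_{\Lin^p}=\norm{T^*}_{\Lin^p}=\norm{T}_{\Lin^p}$ gives the loss-free bound $\norm{T}_{\Lin^p}^p\meg\norm{A}_{\Lin^p}^p+\norm{B}_{\Lin^p}^p$; at $p=1$ one needs in addition the identity $\sup_{(v_j)}\sum_j\abs{\langle Tv_j\vert v_j\rangle}=\sup_U\abs{\tr(TU)}=\norm{T}_{\Lin^1}$, obtained by realizing unitaries as diagonal operators in suitable orthonormal bases; the intermediate and large $p$ then require interpolating between these endpoints and the Hilbert--Schmidt identity $\norm{T}_{\Lin^2}^2=\norm{A}_{\Lin^2}^2+\norm{B}_{\Lin^2}^2$. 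Finally, before trying to hit the displayed constant, note that it cannot hold as written: for $T=\left(\begin{smallmatrix}0&1\\0&0\end{smallmatrix}\right)$ and $p=2$ one has $\norm{T}_{\Lin^2}^2=1$ while $\sup_{(v_j)}\sum_j\abs{\langle Tv_j\vert v_j\rangle}^2=\frac12$, so the left-hand inequality forces a constant at least $2$, whereas $2^{1/p'+(1/2-1/p)_+}=2^{1/2}$. The exponent is evidently meant to be multiplied by $p$ (equivalently, the estimate is for $\norm{T}_{\Lin^p}$ rather than for its $p$-th power), and your argument should target that corrected form.
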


This follows from~\cite[Theorem 1.27]{Zhu} and~\cite[Lemma 2.3.4]{Ringrose}. Use the spectral theorem (applied to the self-adjoint operators $T+T*$ and $(T-T^*)/i$) to prove that $T$ is compact provided that $\sum_{j\in J} \abs{\langle T v_j\vert v_j\rangle}^p<\infty$ for every countable  orthonormal family $(v_j)_{j\in J}$  of elements of $H$.

\begin{prop}\label{prop:13}
Let $H_1,H_2$ be two hilbertian spaces,  $T\in \Lin(H_1;H_2)$, and $p\in [1,\infty[$. Then, $T\in \Lin^p(H_1;H_2)$ if and only if 
\[
\sum_{j\in J} \abs{\langle T v_j\vert w_j\rangle}^p<\infty
\]  
for every countable set $J$ and for every two orthonormal families $(v_j)_{j\in J}$ and $(w_j)_{j\in J}$ of elements of $H_1$ and $H_2$, respectively. In addition,
\[
\norm{T}_{\Lin^p(H_1;H_2)}^p=\sup_{(v_j),(w_j)} \sum_{j}\abs{\langle T v_j\vert w_j\rangle}^p,
\]
where $(v_j)_{j\in J}$ and $(w_j)_{j\in J}$  run through the sets of orthonormal finite families of elements of $H_1$ and $H_2$, respectively.
\end{prop}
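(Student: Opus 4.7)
The plan is to reduce Proposition~\ref{prop:13} to Proposition~\ref{prop:22} via the polar decomposition $T=U|T|$, where $U\in\Lin(H_1;H_2)$ is the partial isometry with initial space $(\ker T)^\perp=\overline{\mathrm{Ran}\,|T|}$ and final space $\overline{\mathrm{Ran}\,T}$. A direct computation gives $U|T|U^*=(TT^*)^{1/2}$, so $|T|$ and $U|T|U^*$ are positive self-adjoint with the same non-zero singular values as $T$; in particular $\||T|\|_{\Lin^p}=\|U|T|U^*\|_{\Lin^p}=\|T\|_{\Lin^p}$.

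For the upper bound $\sum_j|\langle Tv_j\vert w_j\rangle|^p\meg\|T\|_{\Lin^p(H_1;H_2)}^p$ along any pair of orthonormal finite families, I would first apply Cauchy--Schwarz in $H_1$ to
\[
|\langle Tv_j\vert w_j\rangle|=|\langle |T|^{1/2}v_j\vert |T|^{1/2}U^*w_j\rangle|\meg \langle|T|v_j\vert v_j\rangle^{1/2}\langle U|T|U^*w_j\vert w_j\rangle^{1/2},
\]
and then Cauchy--Schwarz in $\ell^2(J)$ to the $p$-th powers:
\[
\sum_j|\langle Tv_j\vert w_j\rangle|^p\meg\Bigl(\sum_j\langle|T|v_j\vert v_j\rangle^{p}\Bigr)^{1/2}\Bigl(\sum_j\langle U|T|U^*w_j\vert w_j\rangle^{p}\Bigr)^{1/2}.
\]
Since $|T|$ and $U|T|U^*$ are normal, Proposition~\ref{prop:22} bounds each of the two sums by $\|T\|_{\Lin^p}^p$, giving the desired inequality.

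For the matching lower bound, I would invoke the singular value decomposition of the compact operator $T$: there exist orthonormal families $(e_n)$ in $H_1$ and $(f_n)$ in $H_2$ with $Tv=\sum_n s_n(T)\langle v\vert e_n\rangle f_n$. Setting $v_n\coloneqq e_n$, $w_n\coloneqq f_n$ and truncating to finite subsets exhibits finite sums $\sum_j|\langle Tv_j\vert w_j\rangle|^p$ that converge to $\sum_n s_n(T)^p=\|T\|_{\Lin^p}^p$, so the supremum in the statement equals $\|T\|_{\Lin^p(H_1;H_2)}^p$ whenever $T\in\Lin^p$.

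The main obstacle is the converse, namely showing that finiteness of $\sup_{(v_j),(w_j)}\sum_j|\langle Tv_j\vert w_j\rangle|^p$ already forces $T\in\Lin^p(H_1;H_2)$; this is what makes the SVD available a posteriori. To address this, I would restrict attention to orthonormal families $(v_j)$ in $(\ker T)^\perp$ and set $w_j\coloneqq Uv_j$; since $U$ is isometric on $(\ker T)^\perp$, $(w_j)$ is orthonormal in $H_2$, and
\[
\langle Tv_j\vert w_j\rangle=\langle U|T|v_j\vert Uv_j\rangle=\langle U^*U|T|v_j\vert v_j\rangle=\langle|T|v_j\vert v_j\rangle,
\]
using that $U^*U$ is the orthogonal projection onto $\overline{\mathrm{Ran}\,|T|}$. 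The hypothesis then bounds $\sum_j\langle|T|v_j\vert v_j\rangle^p$ uniformly over all orthonormal families in $(\ker T)^\perp$, whence Proposition~\ref{prop:22} applied to the self-adjoint operator $|T|$ (which vanishes outside $(\ker T)^\perp$) yields $|T|\in\Lin^p$, hence $T=U|T|\in\Lin^p(H_1;H_2)$; moreover, the equality case of Proposition~\ref{prop:22} for normal operators ensures $\|T\|_{\Lin^p}^p\meg\sup_{(v_j),(w_j)}\sum_j|\langle Tv_j\vert w_j\rangle|^p$, matching the upper bound.
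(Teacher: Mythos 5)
Your argument is correct. The paper itself gives no written proof of Proposition~\ref{prop:13}: it only points to Ringrose and Zhu, ``as for Proposition~\ref{prop:22}'', so any complete argument is by necessity a different route. What you do --- reducing the two-space statement to the diagonal statement of Proposition~\ref{prop:22} via the polar decomposition $T=U\abs{T}$ --- is a clean and legitimate way to make that reduction explicit, and each step checks out: the identity $U\abs{T}U^*=(TT^*)^{1/2}$ and the equality of its Schatten norms with those of $T$; the double Cauchy--Schwarz bound
\[
\sum_j\abs{\langle Tv_j\vert w_j\rangle}^p\meg\Big(\sum_j\langle\abs{T}v_j\vert v_j\rangle^{p}\Big)^{1/2}\Big(\sum_j\langle U\abs{T}U^*w_j\vert w_j\rangle^{p}\Big)^{1/2}\meg\norm{T}_{\Lin^p(H_1;H_2)}^p,
\]
which uses only the normal case of Proposition~\ref{prop:22}; the singular value decomposition for the sharp lower bound; and the choice $w_j\coloneqq Uv_j$ for the converse. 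The only point you should make fully explicit is the last application of Proposition~\ref{prop:22}: your hypothesis yields $\sum_j\langle\abs{T}v_j\vert v_j\rangle^p<\infty$ only for orthonormal families contained in $(\ker T)^\perp$, whereas Proposition~\ref{prop:22} as stated quantifies over all orthonormal families of the ambient space. The intended fix is the one you gesture at parenthetically: apply Proposition~\ref{prop:22} to the restriction of $\abs{T}$ to the Hilbert space $(\ker T)^\perp$, and then observe that $\abs{T}$ is the orthogonal direct sum of this restriction with $0$ on $\ker T$, so the two operators have the same Schatten norms. With that detail spelled out, nothing is missing in substance, and your proof has the additional merit of deriving Proposition~\ref{prop:13} from Proposition~\ref{prop:22} rather than appealing separately to the literature for each.
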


This follows from~\cite[Lemma 2.3.4]{Ringrose} and~\cite[Theorem 1.28]{Zhu}, as for Proposition~\ref{prop:22}.

\begin{prop}\label{prop:34}
Let $H_1$ and $H_2$ be two hilbertian spaces, $p\in ]0,2]$, $T\in \Lin(H_1;H_2)$, and $(v_j)_{j\in J}$, $(w_k)_{k\in K}$ two orthonormal bases of $H_1$ and $H_2$, respectively. Then,
\[
\norm{T}_{\Lin^p(H_1;H_2)}^p\meg \sum_{j\in J} \norm{T v_j}^p\meg \sum_{j\in J,k\in K} \abs{\langle T v_j\vert w_k\rangle}^p.
\]
\end{prop}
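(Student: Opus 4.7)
The plan is to derive both inequalities from the single fact that $t\mapsto t^{p/2}$ is concave on $[0,\infty[$ and vanishes at zero (hence is also subadditive), which holds since $p/2\in\,]0,1]$. The right-hand inequality is immediate: by Parseval in $H_2$ one has $\norm{T v_j}^2=\sum_{k\in K}\abs{\langle Tv_j\vert w_k\rangle}^2$, and subadditivity of $t\mapsto t^{p/2}$ yields
\[
\norm{T v_j}^p=\Bigl(\sum_{k\in K}\abs{\langle T v_j\vert w_k\rangle}^2\Bigr)^{p/2}\meg\sum_{k\in K}\abs{\langle T v_j\vert w_k\rangle}^p,
\]
and summation over $j\in J$ concludes this half.

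For the left-hand inequality I would set $B\coloneqq T^*T$, a bounded positive self-adjoint operator on $H_1$, and $r\coloneqq p/2\in\,]0,1]$, so that by Definition~\ref{def:Schatten-classes} one has $\norm{T}_{\Lin^p(H_1;H_2)}^p=\tr(B^r)$, to be interpreted as an element of $[0,\infty]$. The key step is the pointwise Jensen estimate
\[
\langle B^r v_j\vert v_j\rangle\meg\langle B v_j\vert v_j\rangle^r=\norm{T v_j}^p,
\]
obtained by noting that the spectral measure $\mu_{v_j}$ of $B$ at the unit vector $v_j$ is a Borel probability measure on the spectrum of $B$, so that scalar Jensen applied to the concave function $t\mapsto t^r$ gives $\int t^r\,\dd\mu_{v_j}(t)\meg\bigl(\int t\,\dd\mu_{v_j}(t)\bigr)^r$, which is exactly the displayed inequality. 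Summing this over the orthonormal basis $(v_j)_{j\in J}$ and using the basis-invariant identity $\tr(B^r)=\sum_j\langle B^r v_j\vert v_j\rangle$, valid for positive self-adjoint operators with the trace interpreted in $[0,\infty]$, yields the first inequality.

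The main subtlety is that $T$ is not assumed to belong to $\Lin^p_0(H_1;H_2)$, so $B^r$ need not be trace-class a priori. I would handle this by treating every quantity as an element of $[0,\infty]$: the trace of a positive self-adjoint operator is unambiguously defined in $[0,\infty]$ and independent of the choice of orthonormal basis, while the pointwise Jensen step is insensitive to compactness. The resulting inequality in $[0,\infty]$ is precisely the form used in the proof of Theorem~\ref{prop:33} to bound the Schatten norm of a Toeplitz operator by a sum indexed over a lattice; in particular, whenever the right-hand side is finite one automatically recovers $T\in\Lin^p(H_1;H_2)$ together with the claimed quantitative bound on its Schatten norm.
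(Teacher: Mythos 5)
Your argument is correct and complete. The right-hand inequality via Parseval plus the subadditivity of $t\mapsto t^{p/2}$ is exactly as it should be, and the left-hand inequality via the scalar Jensen estimate $\langle B^{p/2}v_j\vert v_j\rangle\meg\langle Bv_j\vert v_j\rangle^{p/2}$ for the spectral probability measure of $B=T^*T$ at a unit vector, followed by basis-independence of the trace of a positive operator in $[0,\infty]$, is the standard and correct route; your care in treating all quantities as elements of $[0,\infty]$ without assuming compactness of $T$ matches the paper's Definition~\ref{def:Schatten-classes}, which likewise does not presuppose compactness. The paper itself gives no written proof: it only cites \cite[p.~95]{GohbergKrein} ``using the polar decomposition of $T$'', i.e.\ it reduces to the Gohberg--Krein inequality for $s$-numbers of an operator on a single Hilbert space, with the polar decomposition $T=U\abs{T}$ serving to pass between the two-space and one-space settings. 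Your proof bypasses the polar decomposition entirely by working with $T^*T$ on $H_1$ rather than with $\abs{T}$, which is slightly cleaner in the two-Hilbert-space setting and has the merit of being self-contained; the underlying mechanism (concavity of $t\mapsto t^{p/2}$ on the spectrum) is the same one that powers the cited result. The only point worth making explicit in a final write-up is that the subadditivity $(\sum_k a_k)^{p/2}\meg\sum_k a_k^{p/2}$ for countable families of nonnegative reals follows from the two-term case by induction and monotone convergence.
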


 This follows from~\cite[p.~95]{GohbergKrein} using the polar decomposition of $T$.

\begin{prop}\label{prop:17}
Let $H_1,H_2$ be two hilbertian spaces, and take $p\in [1,\infty]$. Then, the mapping $(T,S)\mapsto \norm{T-S}_{\Lin^p(H_1;H_2)}$ is a pseudo-distance which is lower semi-continuous in the weak topology of $\Lin_s(H_1;H_2)$\footnote{We denote by $\Lin_s(H_1;H_2)$ the space $\Lin(H_1;H_2)$ endowed with the topology of simple (or pointwise) convergence. A pseudo-distance on a set $X$ is a symmetric mapping $d\colon X\times X\to [0,\infty]$ which vanishes on the diagonal and satisfies the triangular inequality.} which endows $\Lin^p(H_1;H_2)$ with the topology of a Banach space. In addition, $\Lin^p_0(H_1;H_2)$ is the closure in $\Lin^p(H_1;H_2)$ of the space of linear operators with finite rank.
\end{prop}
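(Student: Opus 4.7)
The plan is to prove the four assertions in the order they appear: pseudo-distance, lower semi-continuity, Banach space structure, and density of finite-rank operators. The crucial tool, already at our disposal, is Proposition~\ref{prop:13}, which for $p\in[1,\infty[$ represents $\norm{T}_{\Lin^p(H_1;H_2)}^p$ as the supremum of $\sum_{j}\abs{\langle T v_j\vert w_j\rangle}^p$ over all finite orthonormal families $(v_j)$ in $H_1$ and $(w_j)$ in $H_2$; for $p=\infty$, the Schatten norm coincides with the operator norm, which is $\sup\Set{\abs{\langle Tv\vert w\rangle}\colon \norm v,\norm w\meg 1}$.

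First I would derive the pseudo-distance properties. Symmetry and vanishing on the diagonal are immediate. For the triangle inequality on $\norm{T+S}_{\Lin^p}$, I would fix finite orthonormal families $(v_j),(w_j)$ and apply the classical Minkowski inequality on $\ell^p$ to the sequences $\big(\langle T v_j\vert w_j\rangle\big)_j$ and $\big(\langle S v_j\vert w_j\rangle\big)_j$, obtaining $\big(\sum_j \abs{\langle (T+S)v_j\vert w_j\rangle}^p\big)^{1/p}\meg\norm{T}_{\Lin^p}+\norm{S}_{\Lin^p}$; then taking the supremum over $(v_j),(w_j)$ yields the claim via Proposition~\ref{prop:13} (and a trivial variant for $p=\infty$). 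For lower semi-continuity, the key observation is that, for every fixed finite $(v_j),(w_j)$, the functional $T\mapsto \big(\sum_j\abs{\langle T v_j\vert w_j\rangle}^p\big)^{1/p}$ is continuous for the weak topology of $\Lin_s(H_1;H_2)$, because each $T\mapsto\langle T v_j\vert w_j\rangle$ is. Consequently $T\mapsto\norm{T}_{\Lin^p(H_1;H_2)}$, being a supremum of such functionals, is lower semi-continuous; applied to $T-S$, so is the pseudo-distance.

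Next I would address completeness. Given a Cauchy sequence $(T_n)$ in $\Lin^p(H_1;H_2)$, Proposition~\ref{prop:25bis} gives $\norm{T_n-T_m}_{\Lin^\infty}\meg \norm{T_n-T_m}_{\Lin^p}$, so $(T_n)$ is Cauchy in operator norm and converges to some $T\in\Lin(H_1;H_2)$; in particular, $T_m-T_n \to T-T_n$ in the weak topology of $\Lin_s$ as $m\to\infty$. The lower semi-continuity established above then yields $\norm{T-T_n}_{\Lin^p}\meg \liminf_m \norm{T_m-T_n}_{\Lin^p}$, which is made arbitrarily small by choosing $n$ large, so $T\in\Lin^p(H_1;H_2)$ and $T_n\to T$ in $\Lin^p$-norm.

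Finally, for the density of finite-rank operators in $\Lin^p_0(H_1;H_2)$: every $T\in\Lin^p(H_1;H_2)$ is compact, since $(T^*T)^{p/2}$ is trace class and hence compact, whence so is $T^*T$ and therefore $T$; thus for $p<\infty$ one has $\Lin^p_0(H_1;H_2)=\Lin^p(H_1;H_2)$ and the singular value decomposition $T=\sum_n s_n\langle\,\cdot\,\vert e_n\rangle f_n$ with $s_n\searrow 0$ (cf.~the reference to~\cite{Kato} given after Proposition~\ref{prop:25bis}) provides truncations $T_N=\sum_{n\meg N} s_n\langle\,\cdot\,\vert e_n\rangle f_n$ of finite rank with $\norm{T-T_N}_{\Lin^p}^p=\sum_{n>N}s_n^p\to 0$. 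For $p=\infty$, $\Lin^\infty_0(H_1;H_2)$ is the space of compact operators and the same singular value truncation argument (with $\norm{T-T_N}_{\Lin^\infty}=s_{N+1}\to 0$) shows that the finite-rank operators are norm dense. The main conceptual point of the proof is the lower semi-continuity trick that converts the representation formula of Proposition~\ref{prop:13} into both the triangle inequality and completeness; no further obstacle arises.
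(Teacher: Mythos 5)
Your proposal is correct and uses exactly the ingredients the paper itself invokes (Proposition~\ref{prop:13} for the triangle inequality and for lower semi-continuity as a supremum of weak-operator-continuous functionals, Proposition~\ref{prop:25bis} to pass from an $\Lin^p$-Cauchy sequence to an operator-norm limit, and the singular value decomposition for the density of finite-rank operators), so it is essentially the same argument, merely written out in full where the paper only gives a pointer. The only items left tacit — homogeneity, the fact that $\norm{T}_{\Lin^p}=0$ forces $T=0$, and that an operator-norm limit of finite-rank operators is compact — are immediate and do not constitute gaps.
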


This follows from Propositions~\ref{prop:13} and~\ref{prop:25bis}, using the singular value decomposition of compact operators (cf., e.g.,~\cite[p.~261]{Kato}).

\begin{prop}\label{prop:16}
Let $H_1,H_2,H_3$ be three hilbertian spaces, and take $p,q,r\in [1,\infty[$ such that $\frac{1}{r}=\frac{1}{p}+\frac{1}{q}$. Then, for every $T\in \Lin^p(H_1;H_2)$  and for every $S\in \Lin^q(H_2;H_3)$, one has $S T\in \Lin^r(H_1;H_3)$, and
\[
\norm{ST}_{\Lin^{r}(H_1;H_3)}\meg \norm{S}_{\Lin^q(H_2;H_3)}\norm{T}_{\Lin^p(H_1;H_2)}.
\]
\end{prop}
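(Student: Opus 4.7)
My plan is to derive the stated H\"older-type inequality from the classical Horn log-majorization of singular values, followed by H\"older's inequality for sequences. Since $p,q<\infty$, both $T$ and $S$ are compact, and hence so is $ST$. Let $s_k(A)$ denote the non-increasing sequence of singular values of a compact operator $A$ between two hilbertian spaces; by the singular value decomposition (cf.~\cite[p.~261]{Kato}), one has $\norm{A}_{\Lin^p(H;H')}^p=\sum_k s_k(A)^p$ for every $p\in[1,\infty)$.

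The crucial analytic ingredient is Horn's log-majorization
\[
\prod_{k=1}^N s_k(ST)\meg\prod_{k=1}^N s_k(S)\,s_k(T)\qquad\text{for every } N\in\N^*,
\]
which can be obtained from the identity $\prod_{k=1}^N s_k(A)=\norm{\wedge^N\! A}_{\mathrm{op}}$, where $\wedge^N\! A\colon\wedge^N H_1\to\wedge^N H_2$ denotes the $N$-th exterior power, combined with the multiplicativity $\wedge^N(ST)=(\wedge^N\! S)(\wedge^N\! T)$ and the submultiplicativity of the operator norm. From this log-majorization, a standard summation-by-parts argument (Weyl's majorization lemma) yields the weak majorization
\[
\sum_{k=1}^N s_k(ST)^r\meg\sum_{k=1}^N\bigl(s_k(S)\,s_k(T)\bigr)^r,\qquad N\in\N^*,
\]
valid for every $r\Meg 1$, since $t\mapsto e^{rt}$ is convex and non-decreasing.

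Letting $N\to\infty$ and applying H\"older's inequality for sequences with exponents $p/r$ and $q/r$ (which are conjugate precisely because $1/r=1/p+1/q$) gives
\[
\sum_k\bigl(s_k(S)\,s_k(T)\bigr)^r\meg\Bigl(\sum_k s_k(S)^q\Bigr)^{r/q}\Bigl(\sum_k s_k(T)^p\Bigr)^{r/p}=\norm{S}_{\Lin^q(H_2;H_3)}^r\,\norm{T}_{\Lin^p(H_1;H_2)}^r,
\]
which simultaneously shows $ST\in\Lin^r(H_1;H_3)$ and delivers the desired norm estimate.

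The only substantive step is Horn's log-majorization, for which I would simply cite the literature, in keeping with the style of this section (which provides no proofs but indicates how the results in the literature adapt). A convenient reference is Gohberg--Krein~\cite{GohbergKrein}, already used for Proposition~\ref{prop:34}, where both Horn's inequality and the Weyl majorization lemma are developed for Hilbert-space operators; alternatively one may invoke~\cite[Theorem 1.41]{Zhu}.
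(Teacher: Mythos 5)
Your proof is correct, but it takes a genuinely different route from the paper's. The paper handles this proposition in one line, in keeping with the stated policy of Section~\ref{sec:Schatten}: it cites the H\"older inequality for Schatten classes of operators on a single Hilbert space, \cite[Theorem 2.3.10]{Ringrose}, and indicates that the polar decompositions of $T$ and $S$ reduce the case of operators between different spaces to that setting. You instead argue directly for operators between different hilbertian spaces via singular values: Horn's log-majorization $\prod_{k=1}^{N}s_k(ST)\meg\prod_{k=1}^{N}s_k(S)s_k(T)$ obtained from exterior powers, then Weyl's majorization lemma, then H\"older with the conjugate exponents $p/r$ and $q/r$. Every step is sound: $ST$ is compact since $p,q<\infty$; the sequence $\big(s_k(S)s_k(T)\big)_k$ is non-increasing, so Weyl's lemma applies with the increasing function $t\mapsto e^{rt}$, convex for every $r>0$; and $r/p+r/q=1$ with $p/r,q/r\in\,]1,\infty[$ (neither exponent degenerates, since $p,q<\infty$), so the final H\"older application is legitimate. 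Your route proves more than is asked --- the weak majorization $\sum_k s_k(ST)^r\meg\sum_k\big(s_k(S)s_k(T)\big)^r$ is stronger than the norm inequality and would extend to quasi-norm exponents $r<1$ --- at the price of importing Horn's inequality, a deeper input than the purely formal polar-decomposition reduction that the paper superposes on \cite{Ringrose}. Since this section cites rather than proves, the only point to tidy is the reference for Horn's inequality: \cite{GohbergKrein} does contain it together with Weyl's majorant theorem, but you should check that the statement you cite is formulated for operators between two different Hilbert spaces (or simply note that your exterior-power argument yields it verbatim in that generality), and the theorem number you quote from \cite{Zhu} should be verified against the edition used here.
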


This follows from~\cite[Theorem 2.3.10]{Ringrose} using the polar decompositions of $T$ and $S$.

\begin{prop}\label{prop:18}
Let $H_1,H_2$ be two hilbertian spaces, and take $p\in [1,\infty]$. Then, the sesquilinear mapping
\[
\Lin^p_0(H_1;H_2)\times \Lin^{p'}(H_1;H_2)\ni (T,S)\mapsto \tr(S^* T)\in \C
\]
induces an isometric antilinear isomorphism of $\Lin^{p'}(H_1;H_2)$ onto $\Lin^p_0(H_1;H_2)'$.
\end{prop}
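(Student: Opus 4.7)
The plan is to exhibit an explicit antilinear inverse by representing functionals on $\Lin^p_0(H_1;H_2)$ through rank-one operators. First, observe that the pairing is well defined and satisfies the Hölder-type bound
\[
\abs{\tr(S^*T)}\meg \norm{S^*T}_{\Lin^1(H_1)}\meg \norm{S}_{\Lin^{p'}(H_1;H_2)} \norm{T}_{\Lin^p(H_1;H_2)}
\]
for every $T\in \Lin^p_0(H_1;H_2)$ and $S\in \Lin^{p'}(H_1;H_2)$, by Proposition~\ref{prop:16} (with the degenerate cases $p\in\Set{1,\infty}$ handled directly: in those cases one of the factors is bounded and the other trace class). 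This shows that $\Phi\colon S\mapsto [T\mapsto \tr(S^*T)]$ is a well defined antilinear contraction from $\Lin^{p'}(H_1;H_2)$ into $\Lin^p_0(H_1;H_2)'$.

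To prove injectivity with norm bound $\norm{\Phi(S)}\Meg \norm{S}_{\Lin^{p'}(H_1;H_2)}$, I would invoke the characterization from Proposition~\ref{prop:13}: given $\eps>0$, choose finite orthonormal families $(v_j)_{j\in J}$ in $H_1$ and $(w_j)_{j\in J}$ in $H_2$ with $\sum_j \abs{\langle S v_j\vert w_j\rangle}^{p'}\Meg \norm{S}_{\Lin^{p'}}^{p'}-\eps$. For the rank-one operators $T_{v,w}\colon x\mapsto \langle x\vert v\rangle w$, a direct computation gives $\tr(S^* T_{v_j,w_j})= \langle w_j\vert S v_j\rangle$ and, since the families are orthonormal, the test operator $T\coloneqq \sum_j \alpha_j T_{v_j,w_j}$ with $\alpha_j\coloneqq \overline{\langle w_j\vert S v_j\rangle}\abs{\langle w_j\vert S v_j\rangle}^{p'-2}$ has disjoint singular components, so $\norm{T}_{\Lin^p}=(\sum_j \abs{\alpha_j}^p)^{1/p}=(\sum_j\abs{\langle S v_j\vert w_j\rangle}^{p'})^{1/p}$ (using $p(p'-1)=p'$), while $\abs{\Phi(S)(T)}=\sum_j\abs{\langle S v_j\vert w_j\rangle}^{p'}$. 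Dividing gives $\norm{\Phi(S)}\Meg(\sum_j\abs{\langle S v_j\vert w_j\rangle}^{p'})^{1/p'}$, which on letting $\eps\to 0$ yields the desired lower bound. The cases $p\in\Set{1,\infty}$ require the analogous but simpler arguments (sup over rank-one test operators for $p'=\infty$, sup over finite sums of orthogonal rank-one operators for $p'=1$).

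For surjectivity, given $L\in \Lin^p_0(H_1;H_2)'$, I would set $B(v,w)\coloneqq \overline{L(T_{v,w})}$ and note that $B$ is sesquilinear (linear in $v$, antilinear in $w$, using the convention $T_{v,w}(x)=\langle x\vert v\rangle w$) and bounded by $\norm{L}\norm{v}\norm{w}$ since $\norm{T_{v,w}}_{\Lin^p}=\norm{v}\norm{w}$. Riesz representation then yields $S\in \Lin(H_1;H_2)$ with $B(v,w)=\langle S v\vert w\rangle$, hence $L(T_{v,w})=\overline{\langle S v\vert w\rangle}=\tr(S^*T_{v,w})$. By (anti)linearity, $L$ and $\Phi(S)$ agree on every finite-rank operator. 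Applying the estimate of the preceding paragraph to $S$, together with the very same test operators $T$ built above, gives the a priori bound $\norm{S}_{\Lin^{p'}}\meg \norm{L}$, so in particular $S\in \Lin^{p'}(H_1;H_2)$. Finally Proposition~\ref{prop:17} asserts that finite-rank operators are dense in $\Lin^p_0(H_1;H_2)$, so the two continuous antilinear functionals $L$ and $\Phi(S)$ coincide everywhere, proving surjectivity.

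The main delicate point will be the verification that the rank-one test operators achieve the Schatten norm sharply: one must check that $\sum_j \alpha_j T_{v_j,w_j}$ has Schatten $p$-norm exactly $(\sum_j\abs{\alpha_j}^p)^{1/p}$ when $(v_j),(w_j)$ are orthonormal, which follows from the singular value decomposition used in Propositions~\ref{prop:25bis} and~\ref{prop:17}. The extreme cases $p=1$ and $p=\infty$ need a small separate argument (in particular for $p=\infty$ one checks directly that $\norm{S}_{\Lin}\meg \norm{L}$ via the defining formula $\langle Sv\vert w\rangle=\overline{L(T_{v,w})}$, while for $p=1$ the density of finite-rank operators in $\Lin^1(H_1;H_2)$ provided by Proposition~\ref{prop:17} permits extending the identity $L(T)=\tr(S^*T)$ from the finite-rank case).
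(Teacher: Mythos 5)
Your argument is correct. The paper gives no proof of Proposition~\ref{prop:18} beyond the remark that it extends~\cite[Theorem 2.3.12]{Ringrose} ``and is proved similarly'', and what you have written out --- the H\"older bound via Proposition~\ref{prop:16}, the sharpness of the pairing on the rank-one test operators $\sum_j\alpha_j T_{v_j,w_j}$ built from orthonormal families, and the Riesz-representation construction of $S$ from a functional $L$ followed by density of finite-rank operators (Proposition~\ref{prop:17}) --- is exactly the standard duality argument that the cited result rests on, adapted to two hilbertian spaces. The only points requiring care are the ones you already flag yourself: the extreme cases $p=1$ and $p=\infty$, where Proposition~\ref{prop:16} as stated does not apply and one argues directly, and the verification via the singular value decomposition that $\norm{\sum_j\alpha_j T_{v_j,w_j}}_{\Lin^p}=\big(\sum_j\abs{\alpha_j}^p\big)^{1/p}$ when the families $(v_j)$ and $(w_j)$ are orthonormal.
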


This extends~\cite[Theorem 2.3.12]{Ringrose} and is proved similarly.

\begin{prop}\label{prop:19}
Let $H_1,H_2$ be two hilbertian spaces, and take $p\in [1,\infty[$. Then,
\[
(\Lin^1(H_1;H_2), \Lin^\infty_0(H_1;H_2))_{[1/p']}=(\Lin^1(H_1;H_2), \Lin^\infty(H_1;H_2))_{[1/p']}\cong \Lin^p(H_1;H_2).
\]
\end{prop}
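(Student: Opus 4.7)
The plan is to prove the claimed equality through the chain of continuous inclusions
\[
\Lin^p(H_1;H_2)\hookrightarrow(\Lin^1,\Lin^\infty_0)_{[1/p']}\hookrightarrow(\Lin^1,\Lin^\infty)_{[1/p']}\hookrightarrow\Lin^p(H_1;H_2),
\]
of which the middle one is trivial because $\Lin^\infty_0$ embeds continuously into $\Lin^\infty$. The case $p=1$ is immediate (with $\theta=0$ and both interpolation spaces reducing to $\Lin^1$), so I assume $p\in\,]1,\infty[$, whence $1/p'\in\,]0,1[$, and I set $\overline{S}\coloneqq\Set{z\in\C\colon 0\meg\Rea z\meg 1}$.

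For the first inclusion, I use the singular value decomposition of compact operators (cf.~\cite[p.~261]{Kato}): given $T\in\Lin^p(H_1;H_2)$, write $T=\sum_n s_n\langle\,\cdot\,\vert v_n\rangle w_n$ with singular values $s_n>0$ and orthonormal families $(v_n)\subseteq H_1$, $(w_n)\subseteq H_2$. For each finite-rank truncation $T_N$, introduce the entire function
\[
F_N(z):=\sum_{n\meg N}s_n^{p(1-z)}\langle\,\cdot\,\vert v_n\rangle w_n.
\]
Then $F_N(1/p')=T_N$ since $p(1-1/p')=1$; the operator $F_N(iy)$ has singular values $s_n^p$, so $\norm{F_N(iy)}_{\Lin^1}=\sum_{n\meg N}s_n^p\meg\norm{T}_{\Lin^p}^p$; and $F_N(1+iy)$ is a finite-rank partial isometry with $\norm{F_N(1+iy)}_{\Lin^\infty}\meg 1$. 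Multiplying $F_N$ by a scalar factor $e^{\lambda(z-1/p')}$ to balance the two boundary norms (i.e.\ choosing $\lambda$ with $e^\lambda=\norm{T}_{\Lin^p}^p$) gives $\norm{T_N}_{(\Lin^1,\Lin^\infty_0)_{[1/p']}}\meg\norm{T}_{\Lin^p}^{p(1-1/p')}=\norm{T}_{\Lin^p}$ directly from the definition of the Calderón interpolation norm. Letting $N\to\infty$ and invoking the completeness of the interpolation space together with Proposition~\ref{prop:17} yields $T\in(\Lin^1,\Lin^\infty_0)_{[1/p']}$ with the same norm bound.

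For the third inclusion, I argue by duality via Proposition~\ref{prop:18}. Suppose $T\in(\Lin^1,\Lin^\infty)_{[1/p']}$ is represented by a bounded holomorphic $F\colon\overline{S}\to\Lin^1+\Lin^\infty$ with $F(1/p')=T$ and boundary bounds $\norm{F(iy)}_{\Lin^1}\meg A$, $\norm{F(1+iy)}_{\Lin^\infty}\meg B$. For an arbitrary finite-rank $R\in\Lin^{p'}(H_2;H_1)$ with singular value decomposition $R=\sum_n\sigma_n\langle\,\cdot\,\vert w'_n\rangle v'_n$, define the entire finite-rank family
\[
G(z):=\sum_n\sigma_n^{p'z}\langle\,\cdot\,\vert w'_n\rangle v'_n,
\]
which satisfies $G(1/p')=R$, $\norm{G(iy)}_{\Lin^\infty}\meg 1$, $\norm{G(1+iy)}_{\Lin^1}\meg\norm{R}_{\Lin^{p'}}^{p'}$. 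Since $G(z)$ is of finite rank, the product $G(z)F(z)$ is trace-class throughout $\overline{S}$, so $z\mapsto\tr(G(z)F(z))$ is holomorphic on the interior, bounded and continuous on $\overline{S}$; Hölder's inequality (Proposition~\ref{prop:16}) gives the boundary bounds $\abs{\tr(G(iy)F(iy))}\meg A$ and $\abs{\tr(G(1+iy)F(1+iy))}\meg B\norm{R}_{\Lin^{p'}}^{p'}$. The three-lines lemma at $1/p'$ then yields $\abs{\tr(RT)}\meg A^{1/p}B^{1/p'}\norm{R}_{\Lin^{p'}}$; taking the infimum over representing $F$ and the supremum over unit-norm finite-rank $R$, Proposition~\ref{prop:18} delivers $\norm{T}_{\Lin^p}\meg\norm{T}_{(\Lin^1,\Lin^\infty)_{[1/p']}}$. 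The main technical obstacle lies in the careful handling of the Calderón interpolation norm (ensuring the Stein construction is an admissible representative of the quotient) and in verifying that the trace pairing is legitimate across the whole strip; both issues are resolved by the finite-rank truncation in step one and by the finite rank of $G$ in step two, together with a density argument to extend the pairing bound from finite-rank $R$ to all of $\Lin^{p'}$.
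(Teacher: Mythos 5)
Your proof is correct, and it is essentially the argument the paper has in mind: the paper offers no written proof, only the citation of \cite[Theorem 13.1 of Chapter III]{GohbergKrein} ``using standard techniques'', and those standard techniques are exactly what you carry out --- the Stein-type analytic family built on the singular value decomposition for the inclusion $\Lin^p\hookrightarrow(\Lin^1,\Lin^\infty_0)_{[1/p']}$ (with the finite-rank truncation correctly ensuring admissibility for the pair with $\Lin^\infty_0$), and the three-lines/trace-duality argument against finite-rank elements of $\Lin^{p'}$ for the reverse inclusion. The only points left implicit (the decay at infinity of the admissible representative, handled by the usual $e^{\eps(z^2-\theta^2)}$ factor, and the degenerate case $\theta=0$ for $p=1$) are routine and do not affect the validity of the argument.
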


Here, $(A,B)_{[\theta]}$ denotes  the complex interpolation space of the Banach pair $(A,B)$ of index $\theta\in [0,1]$.

This follows from~\cite[Theorem 13.1 of Chapter III]{GohbergKrein} using standard techniques.

\end{document}